\newcommand{\url}{}
\newcommand\blfootnote[1]{%
  \begingroup
  \renewcommand\thefootnote{}\footnote{#1}%
  \addtocounter{footnote}{-1}%
  \endgroup
}
\newtheorem{lemma}{Lemma}
\newtheorem{remark}{Remark}
\newtheorem{proposition}{Proposition}
\newtheorem{definition}{Definition}
\newtheorem{theorem}{Theorem}
\newtheorem{corollary}{Corollary}
\newcommand{\tr}{{\rm tr}}
\newcommand{\dE}{\mathbb {E}}
\newcommand{\dP}{\mathbb{P}}
\newcommand{\dN}{\mathbb {N}}
\newcommand{\dR}{\mathbb {R}}
\newcommand{\dC}{\mathbb {C}}
\newcommand{\cW}{\mathcal {W}}
\newcommand{\cP}{\mathcal {P}}
\newcommand{\cL}{\mathcal {L}}
\newcommand{\SPAN}{ \mathrm{span}}
\newcommand{\ABS}[1]{{{\left| #1 \right|}}} 
\newcommand{\BRA}[1]{{{\left\{#1\right\}}}}
\newcommand{\NRM}[1]{{{\left\| #1\right\|}}} 
\newcommand{\NRMHS}[1]{\NRM{#1}_{\mathrm{{HS}}}} 
\newcommand{\PAR}[1]{{{\left(#1\right)}}} 
\newcommand{\AND}{\quad \mathrm{and} \quad}
\newcommand{\uP}{{\underline P}}
\newcommand{\cc}{\mathrm{cc}}
\newcommand{\1}{1\!\!{\sf I}}
\newcommand{\IND}{\1}
\newcommand{\veps}{\varepsilon}
\newcommand{\xbf}{\mathbf{x}}
\newcommand{\ybf}{\mathbf{y}}
\newcommand{\II}{I\hspace{-1pt}I}
\newcommand{\BEAS}{\begin{eqnarray*}}
\newcommand{\EEAS}{\end{eqnarray*}}
\newcommand{\BEA}{\begin{eqnarray}}
\newcommand{\EEA}{\end{eqnarray}}
\newcommand{\BEQ}{\begin{equation}}
\newcommand{\EEQ}{\end{equation}}
\newcommand{\BIT}{\begin{itemize}}
\newcommand{\EIT}{\end{itemize}}
\newcommand{\BNUM}{\begin{enumerate}}
\newcommand{\ENUM}{\end{enumerate}}
\definecolor{darkred}{rgb}{0.9,0,0.3}
\title{Spectral gap of sparse bistochastic matrices with exchangeable rows} 
\author{Charles Bordenave, Yanqi Qiu and Yiwei Zhang\blfootnote{CB is supported by French ANR grant ANR-14-CE25-0014 and ANR-16-CE40-0024-01. YQ is supported by  National Natural Science Foundation of China grants NSFC Y7116335K1 and NSFC 11688101.YZ is supported by National Science Foundation of China grant NSFC 11701200, NSFC 11871262, and AMS China exchange program KY and Yu-Fen Fan fund travel grant.}}
\begin{document}
\maketitle

\abstract{We consider a random bistochastic matrix of size $n$ of the form $M Q$ where $M$ is a uniformly distributed permutation matrix and $Q$ is a given bistochastic matrix. Under sparsity and regularity assumptions on $Q$, we prove that the second largest eigenvalue of $MQ$ is essentially bounded by the normalized Hilbert-Schmidt norm of $Q$ when $n$ grows large. We apply this result to random walks on random regular digraphs.} 

\section{Introduction}

\subsection{Model and main result}
For $n\geq 1$ integer,  let $[n] = \{ 1, \cdots, n \}$.  Let $Q \in M_n(\dC)$ be a bistochastic matrix of size $n$, that is, for any $x,y$ in $[n]$, $Q_{xy} \geq 0$ and the constant vector $\IND = (1, \cdots, 1)   \in \dR^n$ is an eigenvector of $Q$ and its transpose $Q^\intercal$:
\begin{equation}\label{eq:PerronQ}
Q \IND = Q^ \intercal \IND = \IND.
\end{equation}
In probabilistic terms, $Q$ is the transition matrix of a Markov chain on $[n]$ which admits the uniform measure as an invariant measure.

Let $\mathbb{S}_n$ be the symmetric group on $n$ elements. We will denote by $| \cdot |$ the cardinal number of a set and the usual absolute value, $\dP(\cdot)$ and $\dE (\cdot)$ are the probability and expectation under the uniform measure on $\mathbb S_n$: for any subset $E \subset \mathbb S_n$,
$$
\dP ( E )  = \frac{ | E |}{ |\mathbb S_n|}.
$$

Let $\sigma$ be a uniformly distributed random permutation in $\mathbb S_n$. We denote by $M$ the $n \times n$ permutation matrix of $\sigma$. In matrix notation, for all $x,y \in [n]$,
$$
M_{xy} = \mathbbm{1} (\sigma(x) = y).
$$

In this paper, we study the $n \times n$ random matrix
\begin{equation}\label{eq:defP}
P  =  M  Q.
\end{equation}
or, in matrix notation, for all $x,y \in [n]$,
$
P_{xy} = Q_{\sigma(x) y}.
$
Then, $P$ is the transition matrix of a Markov chain on $[n]$ where at each step, we compose with $\sigma$ before performing a step according to $Q$. Note that $P$ itself is bistochastic and thus the constant vector $\IND$ is an eigenvector of $P$ and its transpose $P^\intercal$ with eigenvalue $1$. From Perron-Frobenius theorem, it follows that $1$ is the largest eigenvalue of $P$. We order non-increasingly the moduli of the eigenvalues of $P$, $\lambda_i = \lambda_i(P)$,
\begin{equation}\label{eq:eigP}
1 = \lambda_1 \geq |\lambda_2| \geq \cdots \geq |\lambda_n|.
\end{equation}

The spectral gap is defined as $1 - |\lambda_2|$. It measures the asymptotic mixing rate to equilibrium. For example, if $P$ is aperiodic and irreducible, then for any probability measure $\pi_0$ on $[n]$,
$$
\lim_{t \to \infty}  \NRM{\pi_0 P^t  - \pi }_{\mathrm{TV}}^{1/t} = |\lambda_2|.
$$
where $\pi  = \IND / n$ is the invariant measure of $P$ and, for a signed measure $\nu$ on $[n]$,  $\NRM{ \nu  }_{\mathrm{TV}} = \frac 1 2 \sum_{x} |\nu (x)|$ denotes the total variation norm (we refer to \cite{MR3726904}).

Our main result is a sharp probabilistic upper bound on $|\lambda_2|$ which involves strikingly very few parameters of $Q$. For $A \in M_n( \dC)$, the normalized Hilbert-Schmidt norm is defined as
\begin{equation}\label{eq:defHS}
\NRMHS{ A } = \sqrt{ \frac 1 n \tr ( AA^*) } = \sqrt{ \frac 1 n \sum_{x,y} |A_{xy}|^2 } =  \sqrt{ \frac 1 n \sum_{i=1}^n s_i (A)^2},
\end{equation}
where the scalars $s_i(A)$, denote the singular values of $A$ (that is, the eigenvalues of $\sqrt{A A^\intercal}$ and $\sqrt{A^\intercal A}$).

The $\ell^1$ to $\ell^\infty$ norm of $A \in M_n( \dC)$ is
$$
\NRM{ A }_{1 \to \infty} = \max_{x,y} |A_{xy}|.
$$
For some applications, we introduce a relaxation of this norm. It is  defined, for  $0 < \delta \leq 1$, as
\begin{equation}\label{eq:defAd}
\NRM{ A }^{(\delta)}_{1 \to \infty} = \inf_{ \mathcal{E} \subset [n], |\mathcal{E}| < n^{1-\delta} }\max_{x  \notin \mathcal{E} ,y} |A_{yx}|,
\end{equation}
(note that this is not a norm for $\delta \ne 1$ and  $\NRM{ A }^{(1)}_{1 \to \infty} = \NRM{ A }_{1 \to \infty}$). We also introduce a usual sparsity parameter of $A \in M_n( \dC)$, defined as
\begin{equation}\label{eq:defd}
\NRM{ A }_{1 \to 0} = \max_{ x }  | \{ y : A_{xy} \ne 0 \} |,
\end{equation}
(this is the $\ell^1$ to $\ell^0$ pseudo-norm for the pseudo-norm $\ell^0$ on $\dC^n$, $\| u \|_{\ell^0} = \sum_x \IND ( u_x \ne 0)$).

For the remainder of the text, we fix some $0 < \delta <1$ and set the following  notation
$$
d:= \NRM{ Q^\intercal Q}_{1 \to 0} \AND \rho := \NRMHS{ Q } \vee \NRM{ Q }_{1 \to \infty}^{(\delta)}.
$$

We will always assume that $d \geq 2$ (otherwise $d = 1$, $Q$ itself is a permutation matrix and $P$ and $M$ have the same distribution). We observe that $d$ and $\rho$ are intrinsic parameters of $P$ since
$
\NRMHS{ Q } = \NRMHS{ P }$, $\NRM{ Q}^{(\delta)}_{1 \to \infty} = \NRM{ P }^{(\delta)}_{1 \to \infty}$, $\NRM{ Q^\intercal Q}_{1 \to 0} = \NRM{ P^\intercal P }_{1 \to 0}.
$
Note also that the singular values of $P$ and $Q$ are equal.  Our main result  asserts that $|\lambda_2|$ is essentially bounded by $\rho$ as long as $d$ is not too large.
\begin{theorem}\label{th:main}
Let $n \geq 1$ be an integer and let $\sigma$ be a uniformly distributed random permutation in $\mathbb S_n$. Let $M$ be the permutation matrix of $\sigma$ and $Q \in M_n(\dR)$ be a bistochastic matrix as above. Let $P = MQ$ whose eigenvalues are denoted as in \eqref{eq:eigP}. For any $0 < c_0 <  \delta \leq 1$, there exists a  constant $ c_1 > 0 $ (depending only on $\delta,c_0$) such that
$$
\dP \PAR{ | \lambda_2 |  \geq  (1  + \veps) \rho }  \leq n^{-c_0},
$$
where $$\veps =c_1 \frac{\log d }{\sqrt{ \log n}}.$$
\end{theorem}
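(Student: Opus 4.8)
The plan is to bound $|\lambda_2|$ by the trace (moment) method applied to a large power of the compression of $P$ to $\IND^\perp$. Write $J=\tfrac1n\IND\IND^\intercal$ for the orthogonal projection onto $\SPAN(\IND)$ and $\uP=P-J$. Since $P\IND=P^\intercal\IND=\IND$, the subspaces $\SPAN(\IND)$ and $\IND^\perp$ are $P$-invariant, $\uP$ vanishes on the first and agrees with $P$ on the second, so $|\lambda_2|=\rho(\uP)$, the spectral radius. Using $PJ=JP=J$ one checks $\uP^\ell=P^\ell-J$, and with $B:=P^\ell(P^*)^\ell$ one has $BJ=JB=J$ (both from $P^\ell\IND=(P^*)^\ell\IND=\IND$), hence $(\uP^\ell(\uP^\ell)^*)^m=(B-J)^m=B^m-J$. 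Therefore, for all integers $\ell,m\ge1$,
\begin{equation*}
|\lambda_2|^{2\ell m}=\rho(\uP^\ell)^{2m}\le\NRM{\uP^\ell}_{\mathrm{op}}^{2m}\le\Tr\PAR{(\uP^\ell(\uP^\ell)^*)^m}=\Tr(B^m)-1.
\end{equation*}
Passing to the compression (subtracting $J$) is essential, since $1$ is a singular value of $P^\ell$ and otherwise the trace is $\ge1$, recovering only $|\lambda_2|\le1$; so is the block length $\ell$, since for $\ell=1$ the trace is $\le n-1$ deterministically. We will take $\ell\asymp\sqrt{\log n}$ and $m\asymp\log n/\log d$, so $2\ell m\asymp(\log n)^{3/2}/\log d$; this is what turns a polynomial-in-$n$ prefactor into the error $\veps\asymp\log n/(\ell m)\asymp\log d/\sqrt{\log n}$ after a $(2\ell m)$-th root.

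\textbf{Walk expansion and averaging over $\sigma$.} Expanding $\Tr(B^m)=\Tr((P^\ell(P^*)^\ell)^m)$ gives $\sum_W w_\sigma(W)$, a sum over closed walks $W$ of length $2\ell m$ on $[n]$ made of $m$ blocks, each block being $\ell$ forward steps of $P$ followed by $\ell$ backward steps of $P^*$; a forward step $x\to y$ carries weight $Q_{\sigma(x)y}$, a backward step $x\to y$ carries weight $Q_{\sigma(y)x}$, so $w_\sigma(W)$ is a product of $2\ell m$ entries of the form $Q_{\sigma(a)b}$. Let $A(W)$ be the set of vertices $a$ queried by $\sigma$ along $W$; since $(\sigma(a))_{a\in A(W)}$ is a uniform injection, $\dE_\sigma[w_\sigma(W)]\le C\,n^{-|A(W)|}\prod_{a\in A(W)}\big(\sum_{r}\prod_{b\in N_W(a)}Q_{rb}\big)$, where $N_W(a)$ is the multiset of targets of $a$, using $1/(n)_{|A(W)|}=O(n^{-|A(W)|})$ and enlarging injection sums to sums over all maps (valid as $Q\ge0$). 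A vertex queried once contributes $\sum_rQ_{rb}=1$ by bistochasticity; a vertex queried $j\ge2$ times contributes at most $\rho^{j-1}$ when its targets avoid the exceptional column set $\mathcal E$ of \eqref{eq:defAd} (this uses $\NRM{Q}^{(\delta)}_{1\to\infty}\le\rho$), and pairs of repeated targets may instead be bundled into $\tfrac1n\Tr(Q^\intercal Q)=\NRMHS{Q}^2\le\rho^2$ factors. One then groups walks by isomorphism type — the quotient multigraph $G_W$ together with the visiting pattern.

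\textbf{The tangle-free estimate.} Let $\Omega$ be the event that no forward walk of length $\le2\ell$ in the support of $P$ induces a subgraph with two independent cycles. The out-degree of that support graph is $\le\NRM{Q}_{1\to0}\le d$ (where $d=\NRM{Q^\intercal Q}_{1\to0}$), and for uniform $\sigma$ each additional "return" constraint has probability $O(d^{O(\ell)}/n)$; a first-moment bound over the potential tangles gives $\dP(\Omega^c)\le n^{-c}$ provided $\ell\le c'\log n/\log d$ — this is exactly where the hypothesis "$d$ not too large" enters, as it must leave room for $\ell\asymp\sqrt{\log n}$. On $\Omega$ every block of a contributing walk traces a tangle-free path of length $\le2\ell$, so $\exc(G_W)=O(m)$: a tree plus $O(m)$ extra edges. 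Summing $\dE_\sigma[w_\sigma(W)]$ over walks of a fixed shape: the number of embeddings is $n^{|A(W)|}$ times a Catalan-type factor, vertex sums along tree edges are absorbed into the $\NRMHS{Q}^2$ factors, a branching factor $O(d)$ is paid only at the $O(m)$ excess edges, the remaining frozen $Q$-entries are each $\le\rho$, and passing through $\mathcal E$ ($|\mathcal E|<n^{1-\delta}$, hence $|\sigma^{-1}(\mathcal E)|<n^{1-\delta}$) costs only a $(1+o(1))^{\ell m}$ factor. The powers $n^{|A(W)|}$ and $n^{-|A(W)|}$ cancel, giving
\begin{equation*}
\dE\SBRA{\Tr\PAR{(\uP^\ell(\uP^\ell)^*)^m}\,\IND_\Omega}\ \le\ n^{C}\,d^{Cm}\,(1+o(1))^{\ell m}\,\rho^{2\ell m}.
\end{equation*}

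\textbf{Conclusion and main obstacle.} Since $|\lambda_2|^{2\ell m}\le\Tr((\uP^\ell(\uP^\ell)^*)^m)$ holds deterministically, Markov's inequality and the previous display yield
\begin{equation*}
\dP\PAR{|\lambda_2|\ge(1+\veps)\rho}\ \le\ \dP(\Omega^c)+\frac{\dE\SBRA{\Tr((\uP^\ell(\uP^\ell)^*)^m)\,\IND_\Omega}}{((1+\veps)\rho)^{2\ell m}}\ \le\ n^{-c}+\frac{n^{C}\,d^{Cm}\,(1+o(1))^{\ell m}}{(1+\veps)^{2\ell m}};
\end{equation*}
the factor $\rho^{2\ell m}$ cancels, so nothing degrades when $\rho$ is small. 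With $\ell=\FLOOR{a\sqrt{\log n}}$, $m=\FLOOR{b\log n/\log d}$ and $\veps=c_1\log d/\sqrt{\log n}$ we get $2\ell m\log(1+\veps)\gtrsim\ell m\veps\asymp c_1\log n$, while $C\log n+Cm\log d+o(\ell m)\asymp\log n$ as well (here $m\log d\asymp\log n$, and $\log d\lesssim\sqrt{\log n}$ keeps $\ell\le c'\log_d n$); choosing $c_1$ — hence $a,b$ — large enough in terms of $\delta,c_0$ makes the right-hand side at most $n^{-c_0}$. The crux, and the step I expect to be hardest, is the tangle-free estimate: showing that \emph{every} isomorphism type of closed walk, not merely the near-diagonal ones, contributes at most $n^{O(1)}d^{O(m)}(1+o(1))^{\ell m}\rho^{2\ell m}$. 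There the two norms in $\rho$ play genuinely distinct roles — $\NRMHS{Q}$ for entries summed over a free vertex (via $\Tr(Q^\intercal Q)$), $\NRM{Q}^{(\delta)}_{1\to\infty}$ for entries frozen by revisits — and getting the excess down to $O(m)$, together with showing the exceptional columns $\mathcal E$ do no harm, is where the real work concentrates.
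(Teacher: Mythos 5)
Your overall architecture matches the paper's (trace method on a high power of the compression of $P$ to $\IND^\perp$, after discarding a tangle event of probability $n^{-c}$, with the same final arithmetic in $\ell,m,\veps$), but there is a genuine gap at the central step, and it is exactly the difficulty the paper's Section 2 is built to resolve. You need two incompatible things from the same walk expansion. To produce the factor $\rho^{2\ell m}$ you must expand $\uP^\ell=(\underline M Q)^\ell$ with the \emph{centered} entries $\underline M=M-\dE M$: only then do singly-visited consistent arcs gain the factor $O(\ell m/\sqrt n)$ (the paper's Proposition \ref{prop:Epath}) that kills the ``fresh'' walks, forcing most arcs to have multiplicity $\geq 2$ and yielding $\NRMHS{Q}^{2}$ per repeated arc. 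Your displayed bound $\dE_\sigma[w_\sigma(W)]\le C n^{-|A(W)|}\prod_a(\sum_r\prod_b Q_{rb})$ is the \emph{uncentered} estimate; under it a vertex queried once contributes $1$ by bistochasticity, so the walks in which every vertex is queried once contribute $\Theta(1)$ in total with no $\rho$ factor at all. Subtracting $\Tr(J)=1$ does not cancel this walk-by-walk, and since each walk expectation is only known to relative precision $1+O((\ell m)^2/n)$, the residue after subtraction is at best polynomially small in $n$ --- far larger than $n^C\rho^{2\ell m}=n^C e^{-c(\log n)^{3/2}}$. So the uncentered route can only give $|\lambda_2|\le 1-\veps$, not $(1+\veps)\rho$.

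If instead you use the centered expansion, your key claim ``on $\Omega$ every contributing walk traces tangle-free blocks'' is false: $\underline M_{xy}$ equals $-1/n$, never $0$, when $\sigma(x)\ne y$, so tangled walks do not drop out of $\Tr((\uP^\ell(\uP^\ell)^*)^m)$ on $\Omega$, and $\dE[\prod\underline M\cdot\IND_\Omega]$ is not accessible by the permanent-type computation (the indicator correlates with the entries). These surviving tangled walks, with few distinct arcs each of high multiplicity, contribute $\asymp n^{-a}$ with small $a$ and no centering gain --- this is precisely the paper's observation that $\dE\|(\uP)^\ell\|^{2m}\ge n^{-c}\gg\rho^{2\ell m}$. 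The paper's fix is not cosmetic: one defines $P^{(\ell)}$ and $\uP^{(\ell)}$ as path-sums restricted \emph{by fiat} to tangle-free paths (so the restriction lives in the matrix, not in an indicator on the probability space), proves $P^\ell=P^{(\ell)}$ on the tangle-free event, and then pays for the fact that $(\uP)^\ell\ne\uP^{(\ell)}$ via the telescoping identity of Lemma \ref{le:decompBl}, which produces the remainder matrices $R_k^{(\ell)}$ that must be bounded separately (Proposition \ref{prop:normR}). Your proposal has no analogue of this decomposition, and without it neither branch of the argument closes. (A secondary issue: you swap the roles of $\ell$ and $m$ relative to the paper; the tangle-free radius $h$ must dominate the number of blocks, which constrains $m\lesssim\sqrt{\log n}$, so the paper's choice $\ell\asymp\log n/\log d$, $m\asymp\sqrt{\log n}$ is the one compatible with the combinatorial encoding.)
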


See Figure \ref{fig:sim} for numerical simulations. Theorem \ref{th:main} implies that in many cases, the second largest eigenvalue of $P$ is much smaller than the second largest eigenvalue of $Q$. Assume for example that $Q$ is symmetric (in probabilistic term, $Q$ is a reversible Markov chain) and that  $\rho = \NRMHS{ Q }$ (that is $\NRM{ Q  }_{1 \to \infty}^{(\delta)} \leq \NRMHS{ Q }$). Then the eigenvalues of $Q$ are real and their absolute values coincide with the singular values of $Q$. From \eqref{eq:defHS}, $\NRMHS{ Q }$ is the $\ell ^2$-average of the eigenvalues of $Q$, the latter is typically much smaller than the  second largest eigenvalue of $Q$ in absolute value.
Note also that the eigenvalues of $M$ are all of modulus $1$ and that, with probability tending to $1$ as $n$ goes to infinity, $M$ is non irreducible. It follows that even if the Markov chains $Q$ and $M$ have a small spectral gap ($Q$ may even be non irreducible) then the composed Markov chain $ P = MQ$ has typically a large spectral gap.

 \begin{figure}[h]
\begin{center}
\includegraphics[angle =0,height = 6cm]{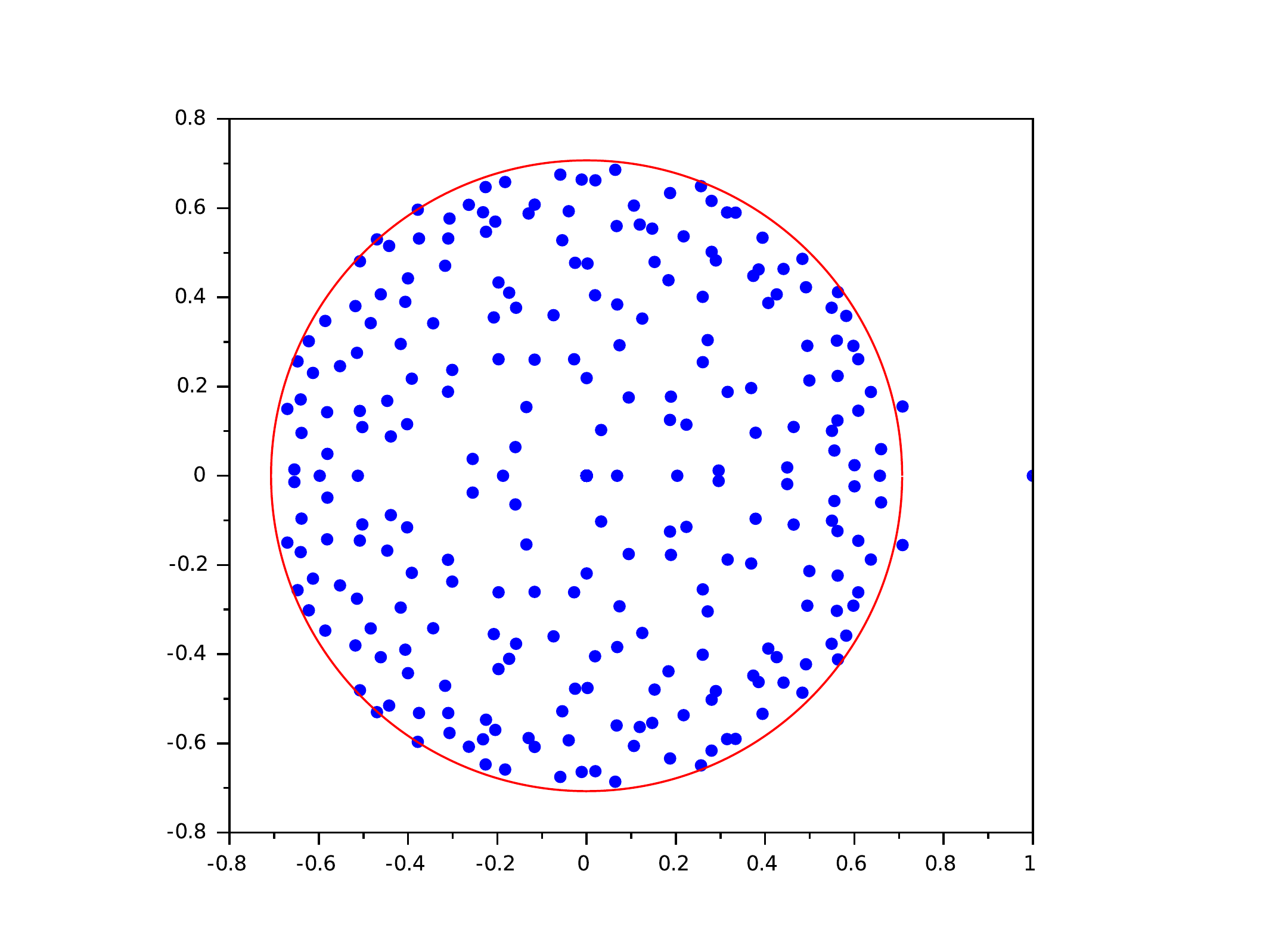}\quad
\includegraphics[angle =0,height = 6cm]{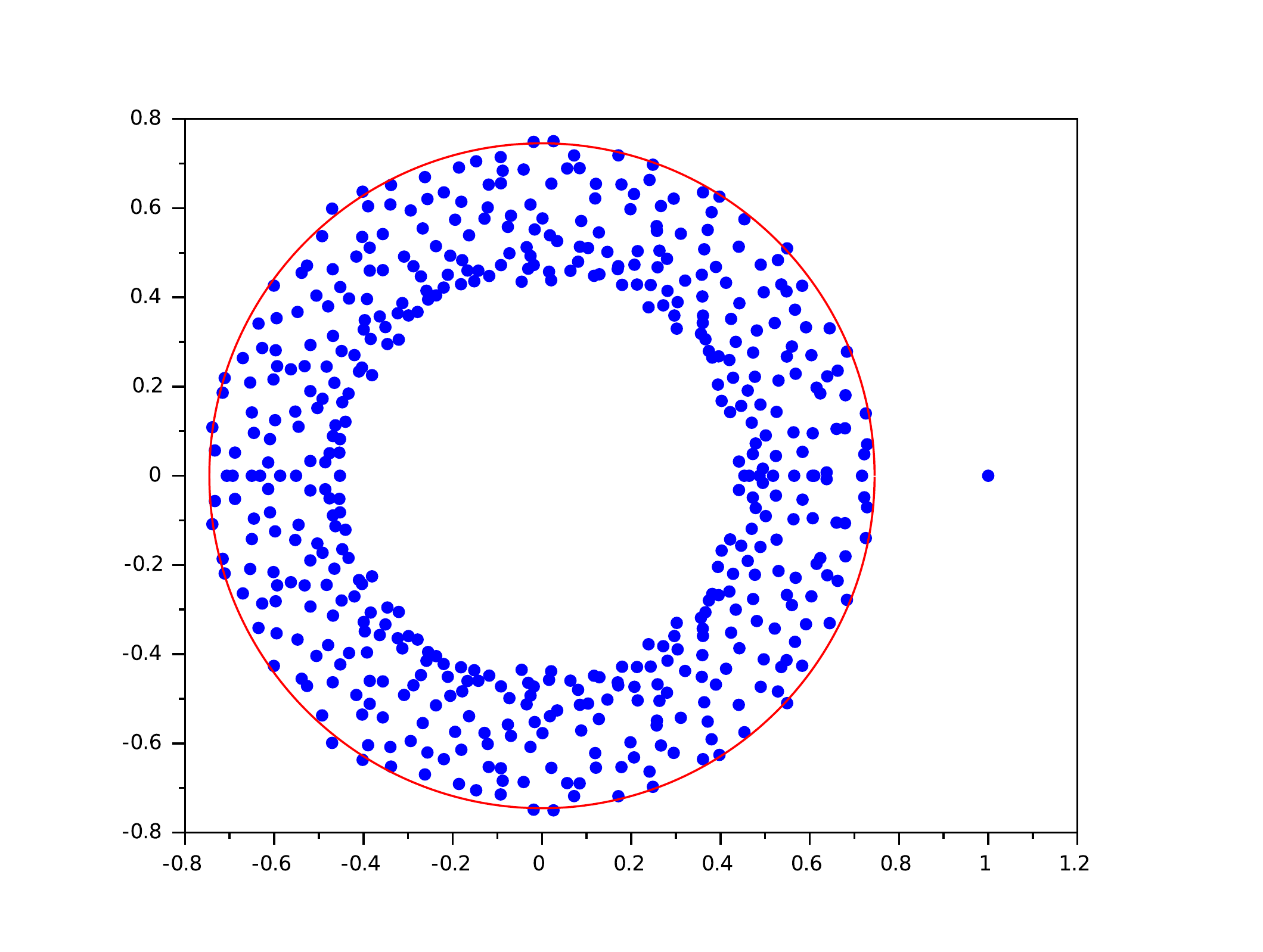}
\caption{Plot of the eigenvalues of $P$ for a single realization of $M$ when $n = 500$ and $Q = p I_n + (1 -p) I_{n/2} \otimes D$ where $I_n$ is the identity matrix of size $n$, $D$ is the matrix of size $2$ given by $D_{11}= D_{22} = 0$, $D_{21} = D_{12} = 1$ with $p= 1/2$ (left) and $p = 1/3$ (right). The circles in red have radii $\NRMHS{Q} = \sqrt{p^2 + (1-p)^ 2}$.}.
\label{fig:sim}
\end{center}
\end{figure}

The conclusion of Theorem \ref{th:main} is especially interesting when $\rho = \NRMHS{ Q }$. This is a condition on the inhomogeneity of the matrix $Q$. Indeed, observe that
\begin{equation*}\label{eq:gtgt}
\max_{y} Q_{yx} \leq \sqrt{\sum_y Q_{yx} ^2 }.
\end{equation*}
Assume that the right-hand side of the above inequality does not depend on $x$. Then we find that $\NRM{ Q }_{1 \to \infty} \leq \NRMHS{ Q }$ and $\rho =\NRMHS{ Q }$. The latter condition holds for example if $Q$ is a transition matrix of simple random walk on the simple regular graph.

We remark that the order $n^{-c_0}$  in Theorem \ref{th:main} cannot be improved  significantly when $Q$ admits an invariant subspace of small dimension spanned by vectors of the canonical basis $(e_x)_{x \in [n]}$.  More precisely, assume for example that $H = \SPAN( e_1, \ldots, e_k)$ is the invariant subspace of $Q$ for some fixed integer $1 \leq k \leq n/2$.  Consider the event $\sigma([k]) = [k]$. It is not hard to check that this event has probability $1/ {n \choose k} \geq 1/n^{k}$. On this event, $H$ and its orthogonal $H^\perp$ are both  invariant by $Q$. Hence, on this event, $\lambda_1 = \lambda_2 = 1$ and
$$
\dP \PAR{ | \lambda_2 |  = 1 }  \geq n^{-k}.
$$

Similarly, if $\delta =0$ (that is, $\rho = \NRMHS{ Q }$), the conclusion of the theorem may be wrong. Assume for example that $Q$ is a bistochastic matrix such that the subset
$$
S = \{x \in [n]: \text{$Q_{y_x x } =1$ for some $y_x \in [n]$}\}
$$
is of positive proportion in $[n]$. Then the probability that for at least one of such $x \in S$, we have $\sigma(x) = y_x$ is uniformly lower bounded in $n$. On the latter event, $\lambda_2 = 1$ since $P_{xx} = 1$.

We expect that when $\rho = \NRMHS{ Q }$ and $d = \exp(o(\sqrt{\log n}))$, the conclusion of Theorem \ref{th:main} is sharp. Namely, we conjecture that for any $\veps > 0$, $ |\lambda_2| \geq  (1 - \veps) \rho$ with probability tending to $1$ as $n$ goes to infinity.  In the next subsection, we will discuss some examples where the conjecture is true. There is an indirect evidence supporting this conjecture when we replace the random permutation matrices by other random unitary matrices. Let $U$ be a random unitary matrix of size $n$ sampled according to the Haar measure on the unitary group. Under mild assumptions on $Q$, it is known that the  spectral radius of $U Q / \NRMHS{ Q }$ converge in probability to $1$, see \cite{MR2831116,MR3000558,MR3164983} and, for the connection to free probability \cite{MR1784419,MR3585560}. More generally, from these references, we might also guess an asymptotic formula for the empirical distribution of the eigenvalues of $P/\NRMHS{ Q}$.

Theorem \ref{th:main} is related to the recent work by Coste \cite{simon}. There, the author studies the spectral gap of the transition matrix of simple random walk on a random digraph. With our notation, it corresponds to  the second eigenvalue of a Markovian matrix of size $m$, proportional to $n$, of the form $A S B^\intercal$, where $S$ is uniformly distributed in $\mathbb S_n$ and $A,B$ are specific matrices in $M_{m,n} (\dC)$ such that $A\IND_n = \IND_m$ and $B^\intercal \IND_m = \IND_n$. In some cases treated in \cite{simon}, the upper bound on $|\lambda_2|$ is also given by $(1+o(1)) \NRMHS{B^\intercal A}$ . Our two results are thus of the same nature even if they are not directly comparable.

We remark finally that Theorem \ref{th:main} can be extended to some extend beyond the uniform measure on $\mathbb S_n$, see Remark \ref{rk:bunif} below, and beyond bistochastic matrices, see Remark \ref{rk:bbis} (for examples to matrices $Q$ such that $\IND$ is a common eigenvector of $Q$ and $Q^\intercal$).

\subsection{Random walks on random digraphs}

In this section, we state some immediate consequences of Theorem \ref{th:main}.

A \emph{digraph} $G = (V,E)$ is the pair formed by a countable vertex set $V$ and a set of oriented edges $E \subset V \times V $. If $e = (u,v) \in E$ then $e$ is an incoming edge of $v$ and an outgoing edge of $u$. For $r \in \dN$, we say that $G$ is $r$-regular if any vertex has exactly $r$ incoming and $r$ outgoing edges. If the set $E$ is symmetric then $G$ can be interpreted as an undirected graph.

\begin{theorem}
\label{th:RWDRW} Let $n\geq 1$ and $r \geq 2$ be  integers and  $Q$ be the transition matrix of a simple random walk on a $r$-regular digraph $G = (V,E)$ with $V = [n]$. Let $\sigma$ be a uniformly distributed permutation in $\mathbb S_n$ and let $M$ be its permutation matrix. Let $P = MQ$ be as in \eqref{eq:defP} with eigenvalue denoted as in \eqref{eq:eigP}. For any $0 < c_0< 1$, there exists $c_1>0$ (depending only on $c_0$) such that the conclusion of Theorem \ref{th:main} holds with $\rho = 1/ \sqrt{r}$ and $d = r^2$.
\end{theorem}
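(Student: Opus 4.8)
The plan is to obtain Theorem~\ref{th:RWDRW} as a direct application of Theorem~\ref{th:main}: the only work is to check that the transition matrix $Q$ of simple random walk on an $r$-regular digraph $G=([n],E)$ satisfies the standing hypotheses of Theorem~\ref{th:main} and to identify the two intrinsic parameters $\rho$ and $d$.

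First I would write $Q$ explicitly. Since every vertex of $G$ has exactly $r$ outgoing edges, $Q_{xy}=\frac1r\mathbbm{1}((x,y)\in E)$ and each row of $Q$ has exactly $r$ nonzero entries, all equal to $1/r$; since every vertex also has exactly $r$ incoming edges, each column of $Q$ likewise has exactly $r$ nonzero entries equal to $1/r$. Hence $Q\IND=Q^\intercal\IND=\IND$, i.e.\ $Q\in M_n(\dR)$ is bistochastic, so Theorem~\ref{th:main} is applicable once we also know $d\ge 2$. Next I would compute $\rho$: on the one hand $\NRMHS{Q}^2=\frac1n\sum_{x,y}Q_{xy}^2=\frac1{nr^2}|E|=\frac1{nr^2}\cdot nr=\frac1r$, so $\NRMHS{Q}=1/\sqrt r$; on the other hand $\NRM{Q}_{1\to\infty}=\max_{x,y}|Q_{xy}|=1/r$, and taking $\mathcal E=\emptyset$ in \eqref{eq:defAd} gives $\NRM{Q}^{(\delta)}_{1\to\infty}\le\NRM{Q}_{1\to\infty}=1/r\le 1/\sqrt r$ (using $r\ge1$). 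Therefore $\rho=\NRMHS{Q}\vee\NRM{Q}^{(\delta)}_{1\to\infty}=1/\sqrt r$, irrespective of the choice of $\delta$.

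Then I would bound $d=\NRM{Q^\intercal Q}_{1\to0}$. Since $(Q^\intercal Q)_{xy}=\sum_z Q_{zx}Q_{zy}$ is nonzero exactly when $x$ and $y$ share a common in-neighbour, and a fixed $x$ has $r$ in-neighbours each of which has $r$ out-neighbours, the set $\{y:(Q^\intercal Q)_{xy}\ne0\}$ has cardinality at most $r^2$; hence $\NRM{Q^\intercal Q}_{1\to0}\le r^2$. Moreover any vertex $z$ has $r\ge 2$ out-neighbours which pairwise share $z$ as a common in-neighbour, so $\NRM{Q^\intercal Q}_{1\to0}\ge2$ and the standing hypothesis $d\ge2$ is satisfied. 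Because the quantity $\veps$ in Theorem~\ref{th:main} is nondecreasing in $d$ and $\rho\ge0$, replacing the true value $\NRM{Q^\intercal Q}_{1\to0}$ by the upper bound $r^2$ only enlarges the event $\{|\lambda_2|\ge(1+\veps)\rho\}$, so the conclusion continues to hold with the value $d=r^2$ advertised in the statement.

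Finally I would fix $\delta:=(1+c_0)/2\in(c_0,1)$ and invoke Theorem~\ref{th:main}; the constant $c_1$ it produces depends only on $\delta$ and $c_0$, hence only on $c_0$, which yields exactly the assertion of Theorem~\ref{th:RWDRW}. There is no genuine obstacle in this argument; the only points deserving a line of care are that $Q^\intercal\IND=\IND$ really uses the in-regularity of $G$ (not merely the out-regularity), and that monotonicity in $d$ is what lets us pass from the actual sparsity of $Q^\intercal Q$ to the clean bound $d=r^2$.
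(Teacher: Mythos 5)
Your proposal is correct and follows essentially the same route as the paper: compute $\NRMHS{Q}=1/\sqrt r$ and $\NRM{Q}_{1\to\infty}=1/r\le 1/\sqrt r$ so that $\rho=1/\sqrt r$, bound $\NRM{Q^\intercal Q}_{1\to 0}\le r^2$ via common in-neighbours, and invoke Theorem~\ref{th:main} (the paper simply takes $\delta=1$ rather than your $\delta=(1+c_0)/2$, which makes no difference). Your extra remarks on $d\ge 2$ and on the monotonicity of $\veps$ in $d$ are correct points that the paper leaves implicit.
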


In the above theorem, the matrix $P$ is the transition matrix of the simple random walk on the random digraph $G^\sigma = (V, E^\sigma)$  where $E^\sigma = \{ ( \sigma^{-1} (x), x') : (x,x') \in E \}$. Note that $G^\sigma$ will have many weak cycles of length $4$ if $G$ has many weak cycles of length $4$.

Theorem \ref{th:RWDRW} can be applied to uniformly sampled $r$-regular digraphs.

\begin{corollary}\label{cor:SRDG}
Let $n \geq 1$ and $r \geq 2$ be integers. Let $P$ be sampled uniformly over bistochastic matrices of size $n\times n$ with entries in $\{0,1/r\}$ and with eigenvalues as in \eqref{eq:eigP}.  Then for any $0 < c_0< 1$, there exists $c_1>0$ (depending only on $c_0$) such that the conclusion of Theorem \ref{th:main} holds with $\rho = 1/ \sqrt{r}$ and $d = r^2$.
\end{corollary}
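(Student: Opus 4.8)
The plan is to reduce Corollary \ref{cor:SRDG} to Theorem \ref{th:RWDRW} by exhibiting an appropriate coupling (or exact distributional identity) between a uniformly sampled bistochastic $\{0,1/r\}$-matrix of size $n$ and a matrix of the form $MQ$, where $M$ is a uniform permutation matrix and $Q$ is the transition matrix of a simple random walk on a fixed, or suitably chosen, $r$-regular digraph on $[n]$. A bistochastic matrix with entries in $\{0,1/r\}$ is exactly $1/r$ times a $0/1$ matrix with every row-sum and every column-sum equal to $r$; equivalently it is the (normalized) adjacency matrix of an $r$-regular digraph $G' = (V, E')$ with $V = [n]$, where loops and the like are permitted as dictated by the definition (a $0/1$ matrix, so no multi-edges). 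So the statement to prove is: if $G'$ is a uniformly random $r$-regular digraph on $[n]$ in this sense, then its normalized adjacency matrix $P$ has the distribution (or can be coupled to within a negligible event with the distribution) of $MQ$ for some fixed $r$-regular $Q$.

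The key step is the observation that, by symmetry of the uniform measure, a uniformly random $r$-regular digraph is invariant in distribution under relabeling of the vertices, i.e.\ under conjugation $P \mapsto \Pi P \Pi^\intercal$ by a uniform permutation matrix $\Pi$; more usefully, it is invariant under $P \mapsto \Pi P$ for uniform $\Pi$ as well, since left-multiplication by a permutation matrix merely permutes the rows of the $0/1$ matrix and preserves the property that all row- and column-sums equal $r$, acting bijectively on the finite set of such matrices. Hence, if $Q_0$ denotes any fixed $r$-regular digraph transition matrix and $\sigma$ is a uniform permutation independent of everything, then $M Q_0$ is a $\{0,1/r\}$ bistochastic matrix whose law I want to compare to uniform. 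In general $MQ_0$ is \emph{not} uniform over all such matrices — its support is the set of row-permutations of $Q_0$, which is a strict subset. So a direct identity fails, and instead I would argue that the spectral conclusion is insensitive to this: Theorem \ref{th:RWDRW} gives the bound on $|\lambda_2(MQ)|$ for \emph{every} fixed $r$-regular $G$ with the \emph{same} explicit parameters $\rho = 1/\sqrt r$ and $d = r^2$ and the \emph{same} constant $c_1$ (depending only on $c_0$), because those parameters do not depend on which $r$-regular graph is chosen. Therefore the bound
$$
\dP\PAR{ |\lambda_2(MQ_G)| \geq (1+\veps)\rho } \leq n^{-c_0}
$$
holds uniformly over the choice of $r$-regular $G$. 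I then condition on the column-partition structure: a uniform $\{0,1/r\}$ bistochastic matrix can be generated by first choosing, say, the unordered collection of its columns (equivalently, an $r$-in-regular multiset structure) and then applying a uniform row-permutation; more simply, partition the sample space of uniform $r$-regular digraphs $G'$ according to the $\mathbb S_n$-orbit of $G'$ under row permutations, observe that within each orbit the conditional law of $P$ is exactly that of $M Q_G$ for a representative $G$ of the orbit, apply the uniform bound above inside each orbit, and average. Since $n^{-c_0}$ is a common bound valid orbit-by-orbit, the same bound survives the averaging.

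The main obstacle I anticipate is making precise the orbit decomposition and checking that the conditional law of a uniform $r$-regular digraph, given its row-permutation orbit, is genuinely uniform over that orbit — this is a counting statement about the action of $\mathbb S_n$ on $0/1$ matrices with prescribed margins, and one must be a little careful because orbits can have different sizes (a digraph with nontrivial row-automorphisms has a smaller orbit). However, this is exactly the standard fact that the uniform measure on a finite set, restricted to an orbit of a group action and renormalized, is the uniform measure on that orbit, combined with the fact that the left-regular action of $\mathbb S_n$ on the set of such matrices (permuting rows) has the property that $M Q_G$ for uniform $M$ is uniform on the orbit of $Q_G$. Once that is in hand, the rest is bookkeeping: the parameters $\rho$ and $d$ and the constant $c_1$ transfer verbatim from Theorem \ref{th:RWDRW}, and no new probabilistic estimate is needed.
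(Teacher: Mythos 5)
Your proposal is correct and matches the paper's proof in essence: both reduce to Theorem \ref{th:RWDRW} by observing that its constants are uniform over the choice of $r$-regular digraph, that the uniform law on $\{0,1/r\}$-bistochastic matrices is invariant under left multiplication by a permutation matrix (so a uniform $P$ has the law of $MP$ with $M$ uniform and independent), and then conditioning on the value of $P$. The paper obtains the invariance in one line --- for any fixed permutation matrix $M$, the map $B \mapsto MB$ is a bijection of this finite set since $A = MB$ iff $M^{*}A = B$ --- which sidesteps the orbit-stabilizer bookkeeping you outline, but the content is the same.
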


For  $r \geq 2$ uniformly bounded in $n$, Corollary \ref{cor:SRDG} is contained in \cite[Corollary 1.2]{simon}.  There is a converse of Corollary \ref{cor:SRDG} in some range of  the degree $r$. It is a consequence of the main results in \cite{Cook,LLTTY} that,  if $r \leq n  - (\log n) ^{96}$ and $r \to \infty$,  then, for any $\veps >0$, with probability tending to $1$ as $n$ goes to $\infty$,  $ |\lambda_2| \geq (1-\veps) \rho$.  Hence, if $r \to \infty$ and $r = \exp ( o (\sqrt{\log n}))$, $|\lambda_2| / \rho$ converges in probability to $1$ as $n\to \infty$.

Let us give another application of Theorem \ref{th:main}. From Birkhoff-von Neumann Theorem, the set of bistochastic matrices is the convex hull of permutation matrices. We thus have the decomposition
\begin{equation}\label{eq:BvN}
Q  = \sum_{i=1}^r p_i M_i,
\end{equation}
where $M_i$ are permutations matrices and $(p_1, \cdots, p_r)$ is a probability vector. This decomposition is not unique in general. Our next result asserts that if $Q$ admits such decomposition with $r$ not too large  and matrices $M_i$ which have few common non-zeros entries then the second largest eigenvalues of $P$ is at most $(1+ o(1) ) \sqrt{\sum_i p _i ^2}$.

\begin{theorem} \label{th:wperm}
Let $n\geq 1$ and $r \geq 2$ be  integers, $p = (p_1, \ldots , p_r)$ be a probability vector and $\sigma_1, \ldots, \sigma_r$ be permutations in $\mathbb S_n$ with associated permutation matrices $M_1, \ldots , M_r$. Assume that $Q$ is given by \eqref{eq:BvN}. We set $S = \{ x \in [n]  : \exists i \ne j , \sigma_i (x) = \sigma_j (x) \}$.  Let $\sigma$ be a uniformly distributed permutation in $\mathbb S_n$ and let $M$ be its permutation matrix. Let $P = MQ$ be as in \eqref{eq:defP} with eigenvalue denoted as in \eqref{eq:eigP}. For any $0 < c_0 < \delta \leq 1$, there exists a  constant $ c_1 > 0 $ (depending only on $\delta,c_0$) such that if $|S| \leq n^{ 1 -\delta }$,  then the conclusion of Theorem \ref{th:main} holds with $\rho = \sqrt{ \sum_i p_i^2 }$ and $d = r^2$.
\end{theorem}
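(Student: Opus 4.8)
\emph{Proof sketch.}
The plan is to derive Theorem~\ref{th:wperm} from Theorem~\ref{th:main} applied to the matrix $Q=\sum_{i=1}^r p_iM_i$ of \eqref{eq:BvN}; everything reduces to bounding the three intrinsic parameters $\NRM{Q^\intercal Q}_{1\to0}$, $\NRMHS{Q}$ and $\NRM{Q}^{(\delta)}_{1\to\infty}$ that enter there in terms of $r$, $\sum_ip_i^2$ and $|S|$. The starting point is the identity $Q_{yx}=\sum_{i:\sigma_i(y)=x}p_i$, so that each row and each column of $Q$ has at most $r$ non-zero entries, and $Q_{yx}$ can exceed $\max_ip_i$ only if $\sigma_i(y)=\sigma_j(y)=x$ for some $i\ne j$, i.e.\ only if $y\in S$ and $x\in\sigma_i(S)$. (When $p_i=1/r$ and $S=\varnothing$ one recovers exactly the situation of Theorem~\ref{th:RWDRW}, which is a useful consistency check.)

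First I would bound the sparsity parameter. Since $(Q^\intercal Q)_{xz}=\sum_yQ_{yx}Q_{yz}$ and the set of $y$ with $Q_{yx}\ne0$ is $\BRA{\sigma_i^{-1}(x):i\in[r]}$, of size at most $r$, while for each such $y$ there are at most $r$ values of $z$ with $Q_{yz}\ne0$, one gets $\NRM{Q^\intercal Q}_{1\to0}\le r^2=d$. Next, expanding the square,
\[
\NRMHS{Q}^2=\frac1n\sum_{x,y}Q_{yx}^2=\sum_{i=1}^rp_i^2+\frac1n\sum_{i\ne j}p_ip_j\,\ABS{\BRA{y:\sigma_i(y)=\sigma_j(y)}},
\]
and since each index set above is contained in $S$ and $\sum_{i\ne j}p_ip_j\le(\sum_ip_i)^2=1$, the cross term is at most $|S|/n$, whence $\NRMHS{Q}^2\le\sum_ip_i^2+|S|/n$. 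Finally, discarding the columns in $\mathcal E:=\bigcup_i\sigma_i(S)$, which has cardinality at most $r|S|$, every remaining entry of $Q$ is at most $\max_ip_i\le\sqrt{\sum_ip_i^2}$, so $\NRM{Q}^{(\delta')}_{1\to\infty}\le\sqrt{\sum_ip_i^2}$ for any exponent $\delta'$ with $r|S|<n^{1-\delta'}$.

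To conclude, fix $0<c_0<\delta\le1$. Because $\mathcal E$ has size $\le r|S|\le rn^{1-\delta}$ rather than $\le n^{1-\delta}$, I would not apply Theorem~\ref{th:main} with the exponent $\delta$ itself but with an intermediate exponent $\delta'\in(c_0,\delta)$ chosen so that $rn^{1-\delta}<n^{1-\delta'}$; this is possible as soon as $r<n^{\delta-c_0}$, which covers the regime $r=n^{o(1)}$ where $d=r^2=\exp(o(\sqrt{\log n}))$ and the statement is non-trivial. With such $\delta'$ one has $\NRM{Q}^{(\delta')}_{1\to\infty}\le\sqrt{\sum_ip_i^2}$, and since $\sum_ip_i^2\ge1/r$ the Hilbert--Schmidt bound reads $\NRMHS{Q}\le\sqrt{\sum_ip_i^2}\,\sqrt{1+r|S|/n}=(1+o(1))\sqrt{\sum_ip_i^2}$ with a polynomially small error; hence the parameter $\rho$ of Theorem~\ref{th:main} is $(1+o(1))\sqrt{\sum_ip_i^2}$ and its parameter $d$ is $\le r^2$. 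Theorem~\ref{th:main} then yields $\dP\PAR{|\lambda_2|\ge(1+o(1))\PAR{1+c_1'\log(r^2)/\sqrt{\log n}}\sqrt{\sum_ip_i^2}}\le n^{-c_0}$, and enlarging the constant to $c_1$ absorbs the $(1+o(1))$ factor — the error being negligible against $c_1'\log(r^2)/\sqrt{\log n}$, which is bounded below by a positive multiple of $1/\sqrt{\log n}$ since the $\sigma_i$ are distinct — giving the asserted bound with $\veps=c_1\log d/\sqrt{\log n}$.

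The main obstacle is exactly this factor-$r$ loss in passing from $|S|$ to the size $|\mathcal E|\le r|S|$ of the set of columns one must remove to control the relaxed $\ell^1\!\to\!\ell^\infty$ norm (and, to a lesser extent, the $|S|/n$ correction in $\NRMHS{Q}$): both are harmless precisely when $r$ is subpolynomial, which forces the device of the intermediate exponent $\delta'$ and a careful comparison of the two $\veps$'s. A routine final point is that for the finitely many small values of $n$, where $\rho=\sqrt{\sum_ip_i^2}$ is bounded below, the claimed inequality is vacuous once $c_1$ is large, since $|\lambda_2|\le1$ always.
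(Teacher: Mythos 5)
Your proposal follows the same route as the paper's own proof: bound the three intrinsic parameters $\NRM{Q^\intercal Q}_{1\to 0}$, $\NRMHS{Q}$ and the relaxed $\ell^1\!\to\!\ell^\infty$ norm, then invoke Theorem~\ref{th:main}. Your bounds $d\le r^2$ and $\NRMHS{Q}^2\le\sum_i p_i^2+|S|/n$ are correct (the paper gets the latter via the triangle inequality for $\NRMHS{\cdot}$ applied to $Q=\sum_i p_iM_i$; same content). The one substantive divergence is the relaxed norm. The paper observes that $\max_y Q_{xy}=\max_i p_i$ for every $x\notin S$ and takes the exceptional set to be $S$ itself, so it keeps the exponent $\delta$ and imposes no constraint tying $r$ to $\delta$; this implicitly treats $\mathcal{E}$ as a set of \emph{rows}. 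You instead read \eqref{eq:defAd} as excising second indices (columns of $Q$), which is in fact the reading consistent with Definition~\ref{def2} and with Lemma~\ref{le:sumpath}, where what is needed is $\max_y Q_{yx}\le\rho$ for $x\notin\mathcal{E}$; the bad columns form the set $\{x:\exists\, y\in S,\ i\ne j,\ \sigma_i(y)=\sigma_j(y)=x\}$, whose size can genuinely exceed $|S|$ by a factor of order $r$, whence your intermediate exponent $\delta'$ and the restriction $r<n^{\delta-\delta'}$. So on this point your version is the more careful one and the factor-$r$ loss you identify is real. The cost is that your argument leaves the range $r\ge n^{\delta-\delta'}$ open, and your claim that the statement is then vacuous is not correct in general: $(1+\veps)\rho\ge 1$ with $\rho$ as small as $1/\sqrt{r}$ would require roughly $\sqrt{r}\lesssim c_1\log d/\sqrt{\log n}$, which already fails for $r$ a modest power of $\log n$. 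The paper's own reduction ``up to increasing $c_1$, assume $r\le\exp(\sqrt{\log n})$'' is an equally unexplained step, so this is a loose end you share with the paper rather than a defect specific to your write-up, but you should not present the large-$r$ regime as settled. Two small slips to fix: $r=n^{o(1)}$ does not imply $d=\exp(o(\sqrt{\log n}))$, and the lower bound $\veps\gtrsim 1/\sqrt{\log n}$ comes from $d=r^2\ge 4$, not from the $\sigma_i$ being distinct (the theorem does not assume they are).
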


In Theorem \ref{th:wperm}, assume that $S = \emptyset$. Then $G = (V,E)$ and $G^{\sigma} = (V, E^{\sigma})$ with $V = [n]$, $E = \{ (x, \sigma_i(x) ) : x \in V, i \in [r] \}$ and $E^\sigma = \{ ( \sigma^{-1} (x), \sigma_i (x)) : x \in V , i \in [r] \}$ are $r$-regular digraphs. The transition matrices $Q$ and $P$ correspond to anisotropic random walks on $G$ and $G^\sigma$. Interestingly, the scalar $\sqrt{\sum_i p_i ^2}$ is the spectral radius of the anisotropic random walk on the infinite homogeneous directed tree, see  the monograph \cite{MR1219707}.

\begin{corollary}\label{cor:ani} Let $n\geq 1$ and $r \geq 2$ be  integers, $p = (p_1, \ldots , p_r)$ be a probability vector and $\sigma_1, \ldots, \sigma_r$ be independent and uniformly distributed permutations in $\mathbb S_n$ with associated permutation matrices $M_1, \ldots , M_r$. Set
$$
P = \sum_{i=1} ^r p_i M_i
$$
with eigenvalue denoted as in \eqref{eq:eigP}. For any $0 < c_0 < 1$, there exists a  constant $ c_1 > 0 $ (depending only on $c_0$) such that the conclusion of Theorem \ref{th:main} holds with $\rho = \sqrt{ \sum_i p_i^2 }$ and $d = r^2$.
\end{corollary}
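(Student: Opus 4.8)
The plan is to recognize $P=\sum_{i=1}^r p_i M_i$ as an instance of the matrix $MQ$ treated in Theorem~\ref{th:wperm} after a change of variables, and then to check that the associated coincidence set is small with high probability. First I would let $\sigma_i$ denote the permutation of $M_i$, set $\tau_1=\mathrm{id}$ and $\tau_i=\sigma_i\sigma_1^{-1}$ for $2\le i\le r$, and use that the map $(\sigma_1,\dots,\sigma_r)\mapsto(\sigma_1,\sigma_2\sigma_1^{-1},\dots,\sigma_r\sigma_1^{-1})$ is a bijection of $\mathbb S_n^r$ to conclude that $\sigma_1,\tau_2,\dots,\tau_r$ are independent and uniform. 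Since $M_\pi M_{\pi'}=M_{\pi'\circ\pi}$ in the row convention $M_{xy}=\mathbbm{1}(\pi(x)=y)$ of the paper, we have $M_i=M_{\sigma_1}M_{\tau_i}$ for every $i$, hence
\[
P=M_{\sigma_1}Q,\qquad Q:=p_1 I_n+\sum_{i=2}^r p_i M_{\tau_i},
\]
so $P$ has the law of $M_\sigma Q$ with $\sigma:=\sigma_1$ a uniform permutation independent of $Q$, and $Q$ a random bistochastic matrix of the form~\eqref{eq:BvN} built from $\tau_1,\dots,\tau_r$ and the same probability vector $p$.

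Next I would bound the set $\widetilde S:=\{x\in[n]:\exists\,i\ne j,\ \tau_i(x)=\tau_j(x)\}$, which is exactly the set $S$ attached to $Q$ in Theorem~\ref{th:wperm}. Because $\tau_i(x)=\tau_j(x)$ iff $\sigma_i(\sigma_1^{-1}(x))=\sigma_j(\sigma_1^{-1}(x))$, one has $\widetilde S=\sigma_1(S)$ with $S:=\{x:\exists\,i\ne j,\ \sigma_i(x)=\sigma_j(x)\}$, hence $|\widetilde S|=|S|$, and $S$ is governed directly by i.i.d.\ uniform permutations. Fix $0<c_0<1$ and choose any $\delta\in(c_0,1)$. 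Since $\sigma_i(x),\sigma_j(x)$ are independent and uniform for $i\ne j$, $\dP(x\in S)\le\binom r2/n$; running a union bound over which pair witnesses each chosen coordinate together with a union bound over the $t$-subsets of $[n]$, $t=\lceil n^{1-\delta}\rceil$ (and using that at distinct points the conditional probability of a coincidence is still $\le 1/(n-2t)$), gives
\[
\dP\PAR{|S|\ge n^{1-\delta}}\le\PAR{\frac{Cr^2}{n^{1-\delta}}}^{\lceil n^{1-\delta}\rceil}
\]
for an absolute constant $C$. In particular this is at most $\tfrac12 n^{-c_0}$ as soon as $r^2\le n^{1-\delta}/(2C)$, which holds for all large $n$ throughout the range $r\le\exp(o(\sqrt{\log n}))$, i.e.\ precisely the range in which $\veps=c_1\log d/\sqrt{\log n}$ is not already bounded below by a positive constant.

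I would then condition on $(\tau_2,\dots,\tau_r)$ belonging to the event $G:=\{|\widetilde S|\le n^{1-\delta}\}$. On $G$, $Q$ is a deterministic bistochastic matrix whose Birkhoff--von Neumann decomposition~\eqref{eq:BvN} is indexed by $\tau_1,\dots,\tau_r$ with coincidence set of size $\le n^{1-\delta}$, so it satisfies the hypotheses of Theorem~\ref{th:wperm} for any fixed $c_0'\in(c_0,\delta)$; since $M_\sigma$ is an independent uniform permutation matrix, that theorem yields $\dP(|\lambda_2|\ge(1+\veps)\rho\mid(\tau_i)_{i\ge 2})\le n^{-c_0'}$ on $G$, with $\rho=\sqrt{\sum_i p_i^2}$, $d=r^2$ and $\veps=c_1'\log d/\sqrt{\log n}$ for some $c_1'$ depending only on $c_0$. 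Integrating over $(\tau_i)_{i\ge 2}$ and adding $\dP(G^c)\le\tfrac12 n^{-c_0}$ from the previous step gives $\dP(|\lambda_2|\ge(1+\veps)\rho)\le n^{-c_0}$ for all large $n$, and adjusting $c_1$ absorbs the finitely many remaining values of $n$.

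I expect the main obstacle to be making the tail estimate of the second step strong enough, and uniform in $r$: when $r$ is a positive power of $n$ the set $S$ becomes macroscopic and Theorem~\ref{th:wperm} no longer applies, so there one would instead control $|\lambda_2|$ directly -- for instance by a matrix-concentration bound for $\sum_i p_i(M_i-\tfrac1n J)$ restricted to $\IND^\perp$, where $J$ is the all-ones matrix -- using that in that regime $\veps$ is at least a fixed multiple of $\sqrt{\log n}$ so that $(1+\veps)\rho$ is correspondingly generous, and then choosing $c_1$ large enough to cover both regimes. Everything else is essentially bookkeeping: the change of variables in Step~1 is exact, and on $G$ the appeal to Theorem~\ref{th:wperm} is verbatim.
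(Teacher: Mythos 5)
Your proposal is correct and follows the same overall route as the paper: write $P$ as (uniform permutation matrix) $\times$ (bistochastic matrix of the form \eqref{eq:BvN}), verify that the coincidence set has cardinality at most $n^{1-\delta}$ outside an event of small probability, and conclude by conditioning and invoking Theorem \ref{th:wperm}. Two steps differ in execution, one cosmetically and one substantively. Cosmetically: the paper does not change variables but instead introduces a fresh independent uniform $M$, uses that $MP$ and $P$ are equal in distribution, and conditions on $P$ itself; your factorization $P=M_{\sigma_1}\bigl(p_1I+\sum_{i\ge2}p_iM_{\tau_i}\bigr)$ with $\tau_i=\sigma_i\sigma_1^{-1}$ accomplishes the same reduction and is equally valid. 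Substantively: the paper bounds $\dP(|S|\ge n^{1-\delta})$ by Markov's inequality from a first-moment computation (whose displayed value $r(r-1)/n$ in fact omits the sum over $x\in[n]$; the correct first moment is $\binom{r}{2}$), which only gives $\dP(|S|\ge n^{1-\delta})\lesssim r^{2}n^{\delta-1}$; combined with the constraint $c_0<\delta$ needed to apply Theorem \ref{th:wperm}, that route directly covers only $c_0<1/2$. Your union bound over $t$-subsets with $t=\lceil n^{1-\delta}\rceil$ yields a superpolynomially small tail in the relevant regime $r=n^{o(1)}$ and hence handles every $c_0<1$, so on this point your argument is the more robust one. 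Finally, both you and the paper dispose of the regime $r\ge\exp(\sqrt{\log n})$ by ``adjusting $c_1$''; you are right to flag that this is not automatic (since $(1+\veps)\rho$ need not exceed $1$ there), but your treatment of that regime is no less complete than the paper's.
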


Consider the setting of Corollary \ref{cor:ani} in the case $p_i = 1/r$ for all $i \in [r]$. Then $\rho = 1/\sqrt r$. It follows from the main result in \cite{BCZ} that if, for some $c >0$, $(\log n)^{12} \leq r \leq c n$ then for any $\veps >0$, $|\lambda_2| \geq (1 - \veps)\rho$ with probability tending to $1$ as $n$ goes to infinity. Hence, in the regime $(\log n)^{12} \leq r \leq \exp (  o ( \sqrt{\log n}))$, $|\lambda_2| / \rho$ converges in probability to $1$ as $n$ goes to infinity.

\subsection{Fluid mixing protocol driven by shuffling-and-fold maps}
In this section, we present a physical interpretation of our main theorems in the setting of fluid mechanical kinematics. Let us briefly state a background of this subject. Generally speaking, the motion of fluid particles is described with a map $S:\mathcal{R}\to S(\mathcal{R})$, where $\mathcal{R}$ refers to fluid particles, and $S(\mathcal{R})$ refers to one advection cycle. Similarly, $n$ advection cycles are obtained by $n$ repeated application of $S$, and denote by $S^{n}(\mathcal{R})$. Meanwhile, put a probability measure $\mu$ that assigns to any (mathematically well-behavior) subdomain of $\mathcal{R}$ as its volume. The incompressibility of the fluid is expressed by stating that, as any subdomain $A\subset\mathcal{R}$ is stirred, $\mu(A)=\mu(S^{-1}(A))$, i.e., the volume of $A$ is preserved under the application of $S$. The definition of $S$ is \emph{mixing} is that:
\begin{equation}\label{equ:mixing}
  \lim_{n\to\infty}\mu(S^{-1}(A)\cap B)=\mu(A)\cdot\mu(B),
\end{equation}
for all Borel subsets $A,B$ of $\mathcal{R}$. This states that under the action of advection cycle on $A$, one expect to find the same amount of $A$ in any of the chosen $B$. Equation \eqref{equ:mixing} can be reformulated in functional form as the action of $S$ on observations $g$ and $h$ via \emph{the decay of correlations}:
\begin{equation}\label{equ:decayofcorrelation}
  \mathcal{C}_{g,h}(n):=\left|\int h(g\circ S^{-n})d\mu-\int g d\mu\cdot\int h d\mu \right|\to 0,~~\mbox{as}~~n\to\infty.
\end{equation}
The observation $g$ and $h$ are representative of scale field with certain regularity. Of course, once $S$ is mixing, the rate of $\mathcal{C}_{g,h}$ gives a quantifier of the speed of mixing. We refer to the book \cite{SOW06} and two recent surveys \cite{Aref04,FGW16} from either physical or mathematical detailed explanations respectively.

Good mixing protocol can be accomplished by the action of stretch and fold (SF) elements, though a cascade to small scales via turbulent eddies \cite{Aref84}. The SF property has been extensively characterized by the uniformly expanding property in the language of dynamical systems. A transfer operator $\mathcal{L}_{f}$ can be associated by an smooth uniformly expanding map $f$, with
\begin{equation}\label{equ_transfer}
  (\cL_{f}\phi)(x):=\sum_{y:f(y)=x}\frac{\phi(y)}{det|Df(y)|}.
\end{equation}
The uniform expanding property ensures $det|Df(y)|\neq 0$ for every point $y$. Then, there is an absolutely continuous invariant probability measure $\mu$ with the density $0<\frac{d\mu}{dLeb}<+\infty$ being the fixed point of $\mathcal{L}_{f}$, and various functional spaces $\Upsilon$ containing smooth observations have been verified preserved by $\mathcal{L}_{f}$, and moreover $\mathcal{L}_{f}$ is (quasi)-compact on $\Upsilon$, i.e., $r_{e}(ess_{\Upsilon}(\mathcal{L}_{f}))<1$, where $r_{e}$ is the spectral radius of the essential spectrum \footnote{A complex number $\lambda$ belongs to $\mbox{ess}(\cL_{f})$, if $\lambda$ is the limit point of $\mbox{spec}(\cL_{f})$. Therefore, the essential spectrum $\mbox{ess}(\cL_{f})$ is a closed set, and $\mbox{spec}(\cL_{f})\backslash\mbox{ess}(\cL_{f})$ consists of at most countably many isolated points which have no limit points outside $\mbox{ess}(\cL_{f})$.} and $1$ is the spectrum radius on $\Upsilon$, (see \cite{Bal00} for the detailed proof on these assertions). Under this setting, the decay of correlation (for observations in $\Upsilon$) shrinks exponentially, with the optimal rate
\begin{align*}
r_{e}(ess_{\Upsilon}(\mathcal{L}_{f}))\leq \tau^{\Upsilon}_{f}&=\inf\{\tau:\mathcal{C}_{g,h}(n)<const_{g,h}\cdot\tau^{n},~~\forall g,h\in\Upsilon,\forall n\in\mathbb{N}\}\\
&=\sup\{\rho:\rho\in\mbox{Spec}_{\Upsilon}(\mathcal{L}_{f})\backslash\{1\}\}<1.
\end{align*}
That is, the maximum of the essential spectrum radius and subdominant eigenvalue of $\mathcal{L}_{f}$ fully determines the mixing rate.

Meanwhile, another mixing process cutting and shuffling (CS), which can increase the number of interfaces and segregation, but doesn't involve material deformation, naturally arise in many circumstances. For instances, split and recombine micromixers adopt the action of CS to increase the number of lamellae between substances \cite{HM98}; Streamline jumping occurs during reorientation and creates pseudoelliptic and pseudohyperbolic period points \cite{LRMTOH10,SRLM16}; High strain in polymeric with shear banding cause slip deformations \cite{LLC12}. All of these mixing protocols exhibit a combinational mechanisms of both SF and CS.

Under this framework, several authors considered the composition of a permutations of equal size cells, or more generally a piecewise isometries $\bar{\sigma}$ with a piecewise expanding maps $f$, and study how the correspond optimal mixing rate $\tau_{f\circ\bar{\sigma}}$ varies with respect to the different choices of $\bar{\sigma}$. In fact, a better understanding of such effects would be expected to deepen our knowledge on the balance between global transporting rate and local diffusivity \cite{FHW02,KCOL12,KSW17,WV02,TC03}.

We will particularly concentrate on the toy model as follows. Let $f(x):=rx \mod 1$ on the torus $[0,1)$ with $r\in\mathbb{N}$. On the other hand, to any permutation $\sigma\in \mathbb S_{n}$,  we associate a linear map, denoted by $\bar{\sigma}$ defined for $i \in [n]$ by
\begin{equation}\label{equ_permutation}
  \bar{\sigma}(x):=x+\frac{\sigma(i)-i}{n}, ~~ \forall x \in I_i:=[\frac{i-1}{n},\frac{i}{n}).
\end{equation}
We are interested in linear expanding maps of the form $f \circ \bar \sigma$, see Figure \ref{fig:2} for an example. This combination model was first introduced in \cite{MR3021362}, and could be used as the basis for study the two dimensional Baker's map composing with CS behavior on its domain \cite{KSW17}. Interestingly, composition of permutations do not improve mixing rate, and typically make it worse. This is contrast to the model considering by Ashwin.et.al \cite{ANK02}, where combining permutations with diffusion from a Gaussian heat kernel accelerates the mixing rate.
\begin{figure}[h]
\begin{center}
\includegraphics[angle =0,height = 4cm]{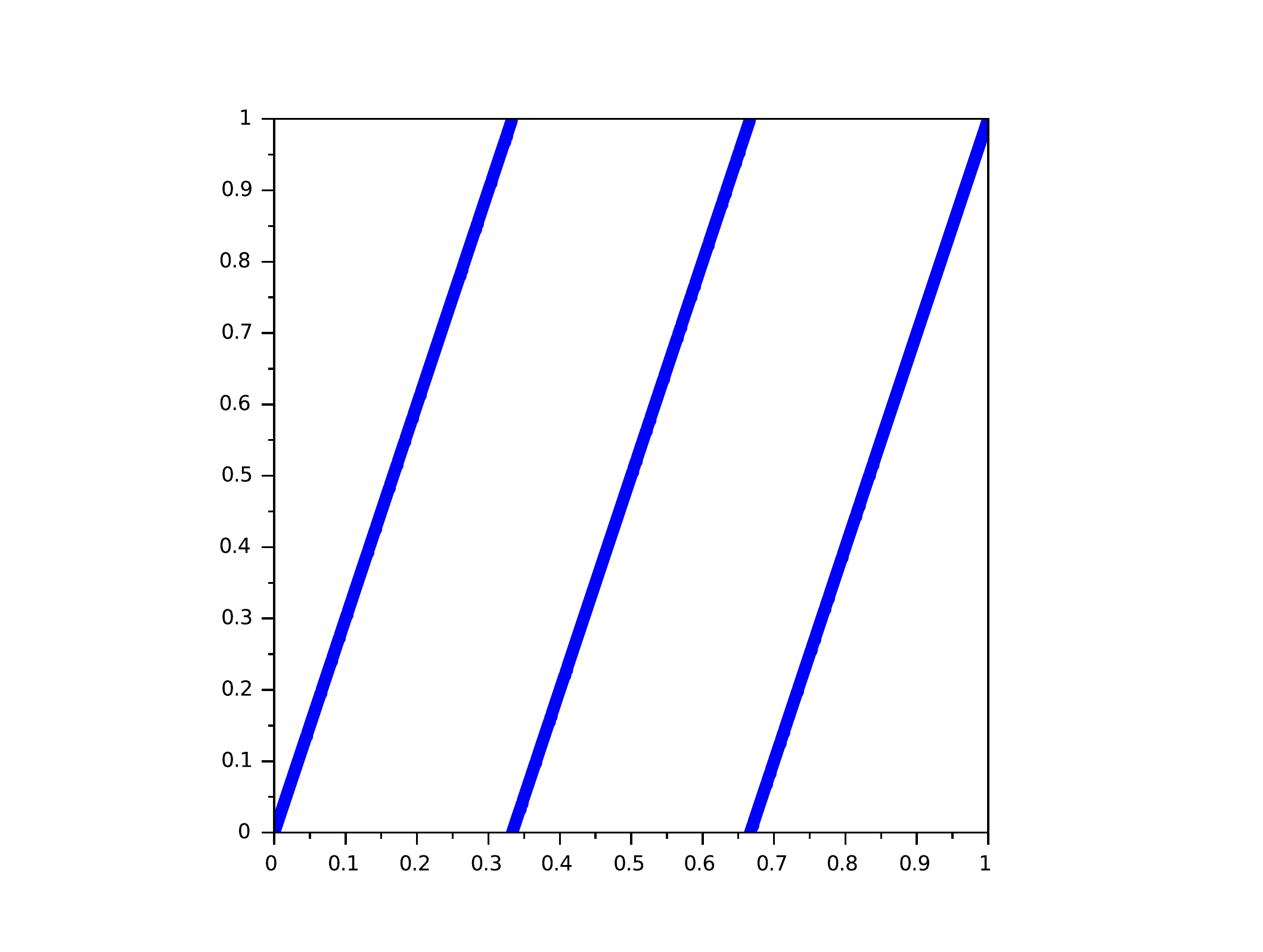}
\includegraphics[angle =0,height = 4cm]{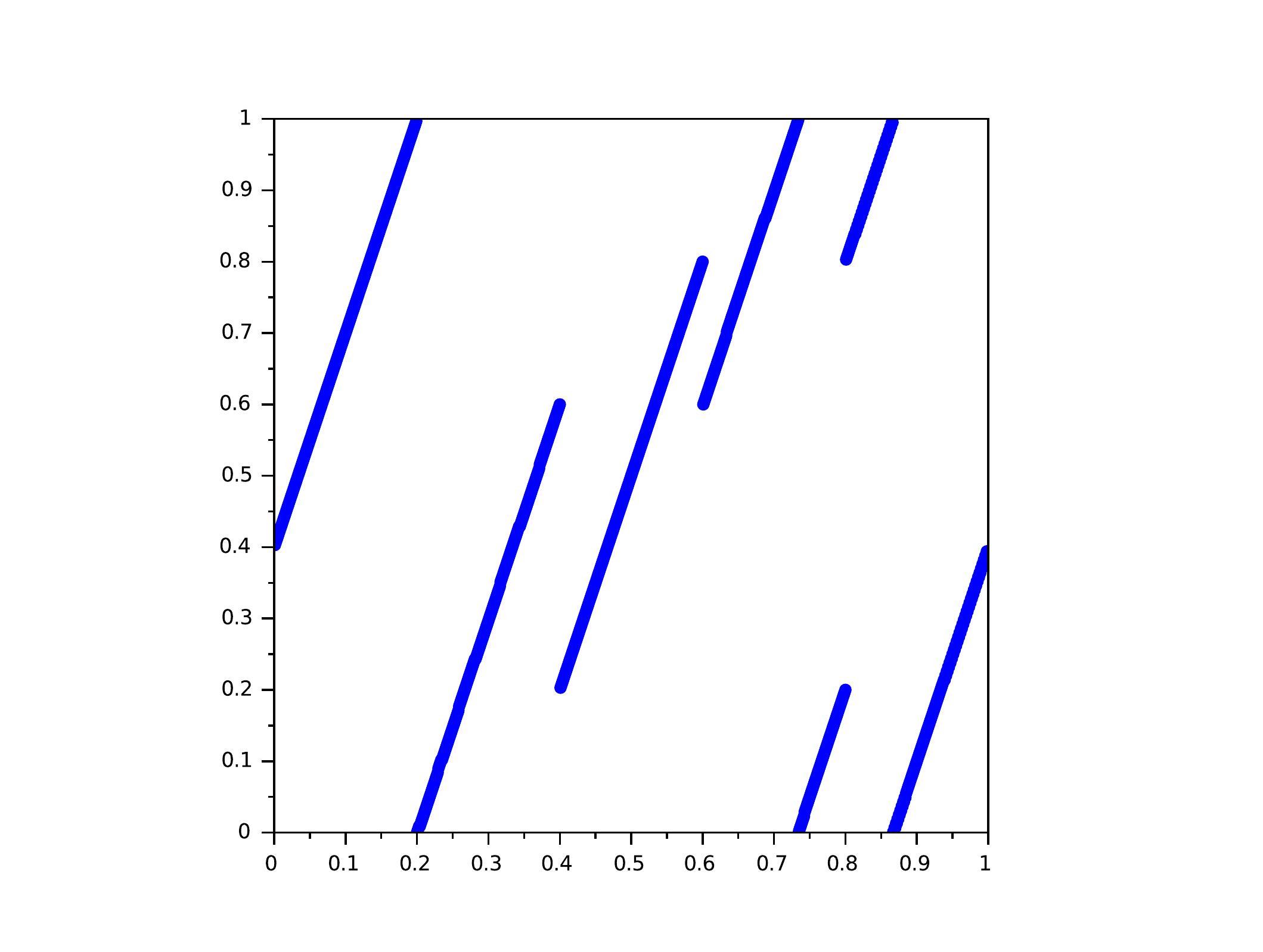}
\includegraphics[angle =0,height = 4cm]{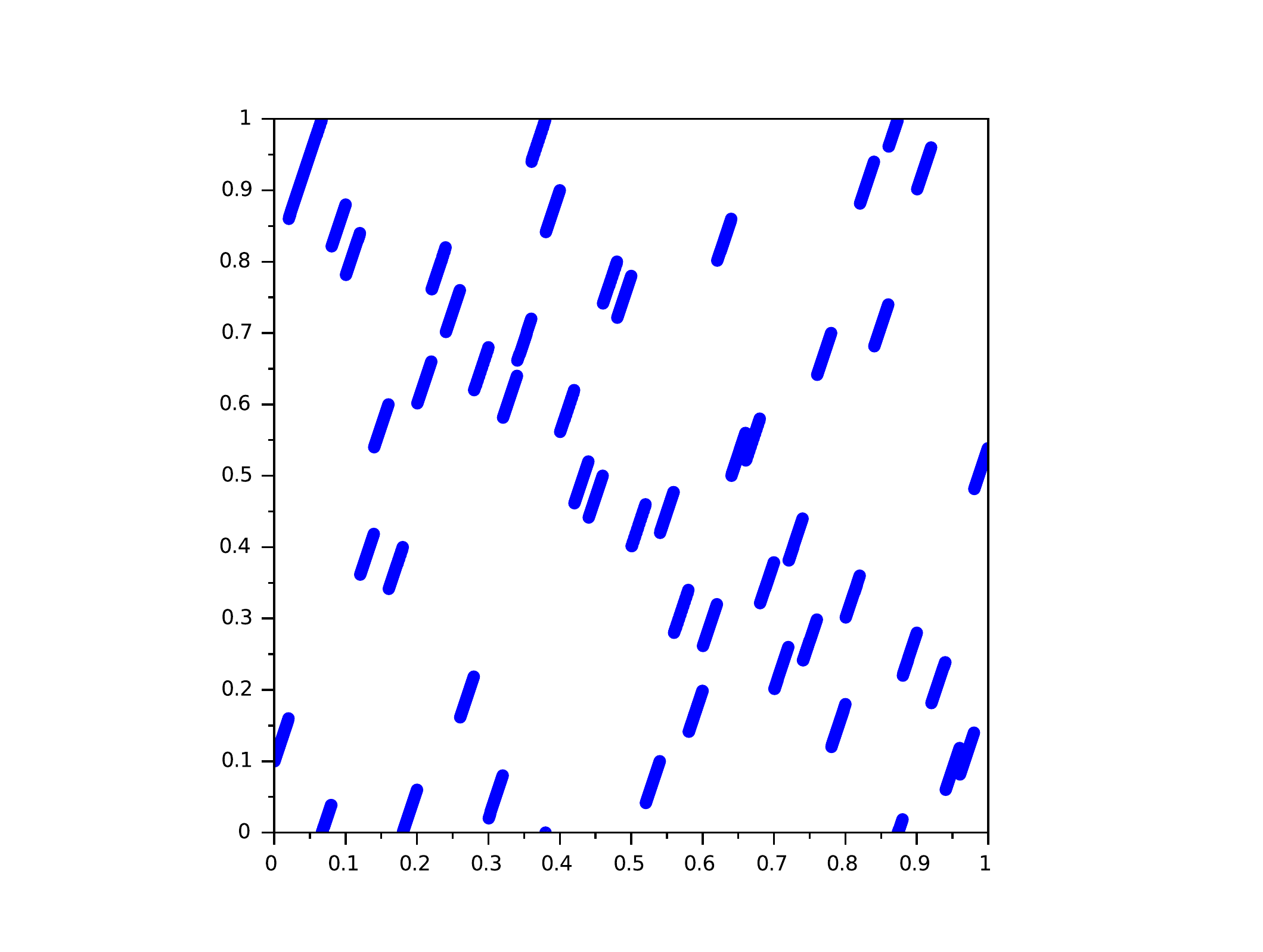}
\caption{Plot of the SF maps $f(x)=3 x\mod 1$ (left) and $f \circ \bar \sigma$ for $n= 5$ with $\sigma = (5  \, 1  \,   3   \,  2  \,  4 )$ (middle) and $n = 50$ with $\sigma$ uniformly distributed (right).}
\label{fig:2}
\end{center}
\end{figure}

Based on the construction, for each permutation $\sigma$,
$$
\mathcal{L}_{f\circ\bar{\sigma}}\varphi(x)=\frac{1}{r}\cdot\sum_{y:f\circ\bar{\sigma}(y)=x}\varphi(y),
$$
and $\mathcal{L}_{f\circ\bar{\sigma}}\mathbf{1}=\mathbf{1}$, where $\mathbf{1}$ is the constant function on $[0,1]$. Thus, the Lebegue measure itself is preserved by $f\circ\bar{\sigma}$.

There is a standard way to reduce the mixing rate $\tau_{f\circ\bar{\sigma}}$ estimation into finite dimensional matrices' eigenvalue estimation (e.g. see \cite[Chapter 9]{BoGo97} for detailed explains). For each $n\geq r$, we define the Markov transition matrix, say $Q^{(\bar{\sigma})}$ of $f\circ\bar{\sigma}$, by for all $i,j\in [n]$,
\begin{equation}\label{equ:transitionmatrix}
Q^{\bar{\sigma}}_{ij}:=\left\{\begin{array}{ll}
                     1/r, & \mbox{if}~ ((f\circ\bar{\sigma})^{-1}(I_{j}))\cap I_{i}\neq\emptyset, \\
                     0, & \mbox{Otherwise}.
                   \end{array}\right.
\end{equation}
It is straightforward to see that $Q^{\bar{id}}$ is a bistochastic matrix, and $Q^{\bar{\sigma}}=M\cdot Q^{\bar{id}}$, where $M$ is the permutation matrix for $\sigma$. Thus $Q^{\bar{\sigma}}$ is a bistochastic matrix for every permutation $\sigma$. On the other hand, by checking the Lasota-Yorke inequality \cite{MR0335758}, it has been verified that when the functional space $\Upsilon$ is chosen from either $\mathcal{A}$, the space of bound holomorphic complex valued functions on $[0,1)$ with continuous extension to the boundary; $\mathcal{C}^{k}$ the space of complex valued functions on $[0,1)$ has $k$-th continuous derivatives; or $BV$, the space of complex valued functions of bounded variation, such that the transfer operator $\mathcal{L}_{f\circ \bar{\sigma}}$ on $\Upsilon$ is (quasi)-compact. Moreover,  Mayer\cite{MR576928}, Ruelle\cite{MR1920859}, Keller\cite{MR1005524} et.al, developed the dynamical Fredholm theory method of Markov shifts which  indicates that all the isolated eigenvalue $\rho$ for $\mathcal{L}_{f\circ\bar{\sigma}}$ is an eigenvalue of $Q^{\bar{\sigma}}$ (e.g. \cite[Theorems 2.7]{Bal00} for analytic case; and \cite[Theorem 2.9]{Bal00} for $\mathcal{C}^{k}$ case; and \cite[Theorem A]{Mori90} for BV case) on $f\circ\bar{\sigma}$). That is to say, for every permutation $\sigma$,
\begin{equation}\label{equ:mixingrate}
  \tau^{\Upsilon}_{f\circ\bar{\sigma}}=\max\{r_{e}(ess_{\Upsilon}\mathcal{L}_{f\circ\bar{\sigma}}),|\lambda_{2}(Q^{(\bar{\sigma})})|\}.
\end{equation}
Meanwhile, their dynamical Fredholm theory method also indicates that the exact value of the essential spectrums for every permutation $\sigma$ can be estimated by
\begin{equation}\label{equ:exactvalue}
r_{e}(ess_{\mathcal{A}}\mathcal{L}_{f\circ\bar{\sigma}})=0,~~r_{e}(ess_{\mathcal{C}^{k}}\mathcal{L}_{f\circ\bar{\sigma}})=\frac{1}{2^{k}}, ~~\mbox{and}~~r_{e}(ess_{BV}\mathcal{L}_{f\circ\bar{\sigma}})=1/2.
\end{equation}

Hence if $\sigma$ is a uniform distribution on $\mathbb{S}_{n}$, then we are in the setting of Theorem \ref{th:main}.

\begin{theorem}\label{cor3}
Let $ d = 1 / r^{2}$, and $\Upsilon$ be either $\mathcal{A},\mathcal{C}^{k}$ or $BV$.
Then for any $0 < c_0 <  1$, there exists a constant $ c_1 > 0 $ (depending only on $c_0$) such that for all $n \geq d $,
$$
\dP \PAR{ \tau^{\Upsilon}_{f \circ \bar \sigma} \geq  (1  + \veps) \rho }\leq n^{-c_0},
$$
where $$\rho = \frac{1}{\sqrt{r}} \AND \veps =c_1 \frac{\log d }{\sqrt{ \log n}}.$$
\end{theorem}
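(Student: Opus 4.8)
The plan is to read $\tau^\Upsilon_{f\circ\bar\sigma}$ off the spectral data of the finite Markov matrix $Q^{\bar\sigma}$ through the reduction \eqref{equ:mixingrate}, and to control its second eigenvalue by applying Theorem~\ref{th:RWDRW} to $Q:=Q^{\bar{id}}$, once we have checked that $Q$ is the transition matrix of a simple random walk on an $r$-regular digraph on $[n]$ and that $Q^{\bar\sigma}=MQ$.

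First I would verify the combinatorial input. Since $\bar{id}=\mathrm{id}$, we have $f\circ\bar{id}=f$ with $f(x)=rx\bmod 1$, and the image of $I_i=[\frac{i-1}{n},\frac in)$ is the arc of $[0,1)$ that starts at the grid point $\frac{r(i-1)}{n}\bmod 1$ and has length $\frac rn$; because $n\ge d=r^2\ge r$, this arc is the disjoint union of exactly $r$ of the cells $I_j$, so $\{I_j\}_{j\in[n]}$ is a Markov partition for $f$ and each row of $Q$ has exactly $r$ entries equal to $1/r$. Moreover, as $i$ runs over $[n]$ the blocks $\{r(i-1)+1,\dots,ri\}$ partition $\{1,\dots,rn\}$ and $rn/n=r$, so each residue modulo $n$ is covered exactly $r$ times; hence each column of $Q$ also has exactly $r$ nonzero entries and $Q$ is an $r$-regular digraph transition matrix. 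Therefore $\NRMHS{Q}^2=\frac1n\cdot nr\cdot r^{-2}=1/r$, $\NRM{Q}_{1\to\infty}^{(\delta)}\le\NRM{Q}_{1\to\infty}=1/r\le1/\sqrt r$ and $\NRM{Q^\intercal Q}_{1\to0}\le r^2$, and by \eqref{equ:transitionmatrix} one checks that $Q^{\bar\sigma}=MQ$ with $M$ the permutation matrix of $\sigma$. Theorem~\ref{th:RWDRW} then provides, for every $0<c_0<1$, a constant $c_1>0$ depending only on $c_0$ such that for all $n\ge d=r^2$,
$$
\dP\PAR{ |\lambda_2(Q^{\bar\sigma})|\ge(1+\veps)\rho }\le n^{-c_0},\qquad \rho=\frac1{\sqrt r},\quad \veps=c_1\frac{\log d}{\sqrt{\log n}}.
$$

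Next I would pass from $|\lambda_2(Q^{\bar\sigma})|$ to $\tau^\Upsilon_{f\circ\bar\sigma}$ through \eqref{equ:mixingrate}, i.e.\ $\tau^\Upsilon_{f\circ\bar\sigma}=\max\{r_e(\mathrm{ess}_\Upsilon\mathcal L_{f\circ\bar\sigma}),|\lambda_2(Q^{\bar\sigma})|\}$. For $\Upsilon=\mathcal A$, \eqref{equ:exactvalue} gives $r_e=0$, so $\tau^{\mathcal A}_{f\circ\bar\sigma}=|\lambda_2(Q^{\bar\sigma})|$ and the displayed inequality is literally the claim. For $\Upsilon=\mathcal C^k$ and $\Upsilon=BV$, \eqref{equ:exactvalue} gives the deterministic values $r_e=2^{-k}$, respectively $r_e=1/2$; on the complement of the exceptional event $|\lambda_2(Q^{\bar\sigma})|<(1+\veps)\rho$, so $\tau^\Upsilon_{f\circ\bar\sigma}=\max\{r_e,|\lambda_2(Q^{\bar\sigma})|\}<(1+\veps)(\rho\vee r_e)$, which is the stated bound whenever $r_e\le\rho$ (that is, $r\le4^k$, resp.\ $r\le4$), and otherwise the same statement with $\rho$ replaced by $\rho\vee r_e$.

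I do not anticipate a real obstacle here: the probabilistic content is entirely in Theorem~\ref{th:RWDRW} (itself a consequence of Theorem~\ref{th:main}), and the rest is the elementary verification that $\{I_j\}$ is a Markov partition with $Q$ being $r$-regular, together with the transfer-operator dictionary \eqref{equ:mixingrate}--\eqref{equ:exactvalue} quoted from the cited Fredholm theory. The only point requiring a little care is the bookkeeping of the (deterministic) essential spectral radius against $\rho$ in the $\mathcal C^k$ and $BV$ cases, as indicated above.
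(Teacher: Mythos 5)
Your proof is correct and follows essentially the same route as the paper's: verify that $Q=Q^{\overline{\mathrm{id}}}$ is the transition matrix of an $r$-regular digraph on $[n]$ (so $\NRMHS{Q}=1/\sqrt r$, $\NRM{Q}_{1\to\infty}=1/r\le \rho$ and $\NRM{Q^\intercal Q}_{1\to 0}\le r^2$), note $Q^{\bar\sigma}=MQ$, apply Theorem~\ref{th:main} with $\delta=1$ (equivalently Theorem~\ref{th:RWDRW}), and convert the bound on $|\lambda_2(MQ)|$ into one on $\tau^{\Upsilon}_{f\circ\bar\sigma}$ via \eqref{equ:mixingrate}. The bookkeeping caveat you raise for $\mathcal C^k$ and $BV$ stems from apparent typos in \eqref{equ:exactvalue}: the essential spectral radii should read $1/r^{k}$ and $1/r$ (as used later in the text and in the paper's own proof, which invokes $r_e\le 1/r<\rho$), so in the intended statement $r_e\le\rho$ always holds and no replacement of $\rho$ by $\rho\vee r_e$ is needed.
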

Together with \cite[Theorem 2]{MR3021362}, we have the following corollary.
\begin{corollary}\label{cor:physical}
For all $n\geq r$ with $gcd(n,r)=1$, then we have
\begin{equation}\label{eq:ineqY}
r_{e}(ess_{\Upsilon}(\mathcal{L}_{f\circ\bar{\sigma}}))\leq\min_{\sigma\in \mathbb S_{n}}\tau^{\Upsilon}_{f\circ \bar{\sigma}}\leq\limsup_{n\to\infty}\mathbb{E}_{n}(\tau^{\Upsilon}_{f\circ\bar{\sigma}})=\frac{1}{\sqrt{r}}<\max_{\sigma\in \mathbb S_{n}}\tau^{\Upsilon}_{f\circ\bar{\sigma}}= \frac{\sin(r\pi/n)}{r\sin(\pi/n)}<1.
\end{equation}
\end{corollary}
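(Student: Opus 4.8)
This statement is a chain of four comparisons; I would verify them in turn, the only substantive new input being the value of the limiting mean.

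\emph{The two leftmost links.} By \eqref{equ:mixingrate}, for every $\sigma\in\mathbb S_n$ one has $\tau^{\Upsilon}_{f\circ\bar\sigma}=\max\{r_e(\mathrm{ess}_\Upsilon \mathcal L_{f\circ\bar\sigma}),|\lambda_2(Q^{(\bar\sigma)})|\}\ge r_e(\mathrm{ess}_\Upsilon \mathcal L_{f\circ\bar\sigma})$, and by \eqref{equ:exactvalue} the right-hand side is a constant $c_\Upsilon\in\{0,2^{-k},1/2\}$ independent of $\sigma$; minimizing over $\sigma$ gives $r_e(\mathrm{ess}_\Upsilon(\mathcal L_{f\circ\bar\sigma}))=c_\Upsilon\le\min_{\sigma\in\mathbb S_n}\tau^{\Upsilon}_{f\circ\bar\sigma}$. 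For the second link, a minimum never exceeds an average, so $\min_{\sigma\in\mathbb S_n}\tau^{\Upsilon}_{f\circ\bar\sigma}\le\mathbb E_n(\tau^{\Upsilon}_{f\circ\bar\sigma})$ for each $n$; combined with the next link this yields $\limsup_n\min_{\sigma\in\mathbb S_n}\tau^{\Upsilon}_{f\circ\bar\sigma}\le 1/\sqrt r$.

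\emph{The limiting mean, upper bound.} Since $Q^{(\bar\sigma)}$ is bistochastic its eigenvalues have modulus $\le1$ and $r_e(\mathrm{ess}_\Upsilon)<1$, so $\tau^{\Upsilon}_{f\circ\bar\sigma}\le1$ surely. Fixing $c_0\in(0,1)$ and splitting the expectation over the event in Theorem \ref{cor3},
\[
\mathbb E_n\!\left(\tau^{\Upsilon}_{f\circ\bar\sigma}\right)\le\frac{1+\veps_n}{\sqrt r}+\dP\!\left(\tau^{\Upsilon}_{f\circ\bar\sigma}\ge\tfrac{1+\veps_n}{\sqrt r}\right)\le\frac{1+\veps_n}{\sqrt r}+n^{-c_0},
\]
where $\veps_n\to0$ as $n\to\infty$ for fixed $r$; hence $\limsup_n\mathbb E_n(\tau^{\Upsilon}_{f\circ\bar\sigma})\le1/\sqrt r$.

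\emph{The limiting mean, lower bound, and the rightmost equality.} Here $\tau^{\Upsilon}_{f\circ\bar\sigma}\ge|\lambda_2(Q^{(\bar\sigma)})|$, so it suffices to bound $\mathbb E_n|\lambda_2(Q^{(\bar\sigma)})|$ from below. When $\gcd(n,r)=1$ the fixed matrix $Q^{(\bar{id})}$ has exactly $r$ nonzero entries (each equal to $1/r$) per row and column, and by inspection $Q^{(\bar{id})}=\tfrac1r\sum_{s=1}^r P_s$, where $P_s$ is the permutation matrix of $i\mapsto r(i-1)+s \pmod n$; thus $Q^{(\bar\sigma)}=MQ^{(\bar{id})}$ is the transition matrix of simple random walk on a random $r$-regular digraph, and the sharp converse bounds for such walks (Coste \cite{simon} for $r$ bounded, \cite{Cook,LLTTY,BCZ} for $r\to\infty$) give $|\lambda_2(Q^{(\bar\sigma)})|\ge(1-o(1))/\sqrt r$ with probability tending to $1$; since $|\lambda_2|\ge0$ this forces $\liminf_n\mathbb E_n(\tau^{\Upsilon}_{f\circ\bar\sigma})\ge1/\sqrt r$, so the limit equals $1/\sqrt r$. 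The identity $\max_{\sigma\in\mathbb S_n}\tau^{\Upsilon}_{f\circ\bar\sigma}=\sin(r\pi/n)/(r\sin(\pi/n))$ is \cite[Theorem 2]{MR3021362}; the extremal value is the modulus of the subdominant eigenvalue of the circulant $\tfrac1r\sum_{j=0}^{r-1}C^j$ ($C$ the cyclic shift), which equals $\tfrac1r\bigl|\sum_{j=0}^{r-1}e^{2\pi i j/n}\bigr|=\sin(r\pi/n)/(r\sin(\pi/n))$. Since $\gcd(n,r)=1$ and $r\ge2$ force $n>r$, we have $0<r\pi/n<\pi$, and writing $\sin(r\theta)/\sin\theta=\sum_{j=0}^{r-1}e^{i(r-1-2j)\theta}$ with $\theta=\pi/n$, this is a sum of $r$ unit vectors that are not all equal (because $e^{2i\theta}\ne1$), hence of modulus $<r$: dividing by $r$ gives the $<1$ at the end of the chain. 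That the same quantity exceeds $1/\sqrt r$ in the regime where it is compared with $1/\sqrt r$ is elementary, since $\sin(r\pi/n)/(r\sin(\pi/n))\to1$ as $n\to\infty$.

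\emph{Main obstacle.} The one ingredient not furnished by the present paper is the lower bound $|\lambda_2(Q^{(\bar\sigma)})|\ge(1-o(1))/\sqrt r$ used above: Theorem \ref{cor3} (equivalently Theorem \ref{th:main}) yields only the matching upper bound, so the equality $\limsup_n\mathbb E_n(\tau^{\Upsilon})=1/\sqrt r$ rests on importing — or re-deriving — a Friedman-type converse for the random digraph $Q^{(\bar\sigma)}=MQ^{(\bar{id})}$; the delicate part there is to rule out small coordinate subspaces that are (almost) invariant under $Q^{(\bar{id})}$, which is exactly where the hypothesis $\gcd(n,r)=1$ is needed to keep the underlying digraph well-connected. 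One should also bear in mind that the chain is sharpest — and $1/\sqrt r$ genuinely strictly between the two ends — precisely when $r_e(\mathrm{ess}_\Upsilon)\le1/\sqrt r$, i.e. for $\Upsilon=\mathcal A$ or, for $\mathcal C^k$ and $BV$, for $n$ in the appropriate range.
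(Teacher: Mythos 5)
The paper itself contains no written proof of this corollary: it is stated as the direct combination of Theorem \ref{cor3} with \cite[Theorem 2]{MR3021362}, and your reconstruction of what that combination actually delivers follows the intended route. The first link from \eqref{equ:mixingrate}, the bound $\limsup_n\mathbb{E}_n(\tau^\Upsilon_{f\circ\bar\sigma})\le 1/\sqrt r$ obtained by splitting the expectation on the event of Theorem \ref{cor3} and using the deterministic bound $\tau\le 1$, and the quotation of the identity $\max_\sigma\tau^\Upsilon_{f\circ\bar\sigma}=\sin(r\pi/n)/(r\sin(\pi/n))<1$ from \cite[Theorem 2]{MR3021362} (with your circulant computation as a sanity check) all match the paper; note only that $1/\sqrt r<\max_\sigma\tau$ is an asymptotic statement (at $n=r+1$ the right-hand side equals $1/r$), so your ``in the regime where it is compared'' reading is the correct, if charitable, one. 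One link you leave slightly short is $\min_{\sigma\in\mathbb S_n}\tau\le\limsup_{n}\mathbb{E}_n(\tau)$: as written it compares a fixed-$n$ minimum with a limit in $n$, and ``min $\le$ mean'' only yields your limsup version. A clean fix is to take $\sigma=\mathrm{id}$: when $\gcd(n,r)=1$, $Q^{\bar{id}}$ maps the Fourier vector indexed by $k$ to a multiple of the one indexed by $rk\bmod n$ with factor of modulus $\sin(\pi rk/n)/(r\sin(\pi k/n))$, and this factor telescopes around each cycle of $k\mapsto rk$, so every non-Perron eigenvalue of $Q^{\bar{id}}$ has modulus exactly $1/r$; hence $\min_\sigma\tau\le\max\{r_e,1/r\}\le 1/\sqrt r$ for every such $n$ (under the same proviso $r_e\le 1/\sqrt r$ that you rightly point out).

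The genuine gap is the one you flag yourself: the lower-bound half of $\limsup_n\mathbb{E}_n(\tau)=1/\sqrt r$. Your appeal to \cite{simon,Cook,LLTTY,BCZ} does not apply here: those converse results concern uniformly sampled $r$-regular digraphs or sums of $r$ independent uniform permutations, and they require $r\to\infty$ (with $r\le n-(\log n)^{96}$) or $r\ge(\log n)^{12}$, whereas in the present setting $r$ is fixed and the random matrix is $MQ^{\bar{id}}$ with $Q^{\bar{id}}$ a fixed, highly structured matrix; randomizing $M$ does not make the law uniform over $r$-regular digraphs, so the writing $Q^{\bar{id}}=\frac1r\sum_s P_s$ does not place you in the setting of those references either (the $P_s$ are neither independent nor uniform). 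The matching lower bound $|\lambda_2(MQ^{\bar{id}})|\ge(1-o(1))/\sqrt r$ at fixed $r$ is precisely the statement this paper records only as a conjecture, and it is not contained in \cite[Theorem 2]{MR3021362}. So your proposal establishes exactly what the paper's one-line derivation establishes — everything except the ``$\ge$'' direction of the equality — and your ``main obstacle'' paragraph correctly identifies that a Friedman/Alon--Boppana-type converse for this specific model would have to be imported or proved; neither your citations nor the paper supply it.
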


Corollary \ref{cor:physical} has an interesting physical interpretation: First of all, the decay of correlation for $f$ itself is always fastest among all the permutations, and it varies on the different regularity choice of observations, e.g. for analytic observations, it is super-exponential with $\tau^{\mathcal{A}}=0$; and for $\mathcal{C}^{k}$ observations, it is exponential with $\tau^{\mathcal{C}^{k}}=\frac{1}{r^{k}}$; while for bounded variation observations, it is exponential with $\tau^{BV}=\frac{1}{r}$ respectively. However, no matter which regular observations are chosen, combining with permutation in shuffling and folding can always decelerate the decay of correlation to arbitrarily slow, providing that the order of the permutation becomes sufficiently large.

On the other hand, regarding for a typical permutation, the average rate can be worse asymptotically at most to $\frac{1}{\sqrt{r}}$, which is independent of the regularity of observations. In other words, if one take an typical interval exchange transformation (not necessarily with the same size of the cell) in practice, then the boundary of interval exchange transformation will be rational, and can be equivalently addressed as a permutation of a very high order. Thus, the mixing rate is becoming slow, but at most to $\frac{1}{\sqrt{r}}$.

\subsection{Strategy of proof of Theorem \ref{th:main}}

The proof of Theorem \ref{th:main} will follow the strategy developed in  \cite{MR3758726,bordenaveCAT} to study the spectral gap of non-backtracking operators of random graphs.  Let us summarize the strategy of proof and its caveats. We will fix an integer $\ell$ of order $\log n  $.  Since \eqref{eq:PerronQ} also holds for $P$, it is immediate to check that
\begin{equation}\label{eq:basicl}
\ABS{\lambda_2}^\ell  \leq \| (P^{\ell}) _{ | \IND^{\perp}} \| := \max_{\langle v , \IND \rangle   = 0} \frac{\| P^\ell v \|_2}{\|v\|_2}.
\end{equation}

Our main result is an upper bound for the operator norm of $P^\ell$ on $\IND^\perp$. By adjusting the constants $c_0,c_1$, Theorem \ref{th:main} is an immediate consequence of \eqref{eq:basicl} and the following result applied to $\ell \sim (c_0/3) \log n / \log d$.

\begin{theorem}\label{th:main2}
For any $0 < c_0 < \delta \leq 1$,  there exists a  constant $c_1 > 0 $ such that,  for any integer $\ell \geq 1$,
$$
\dP \PAR{\| (P^{\ell}) _{ | \IND^{\intercal}} \| \geq e^{ c_1 \sqrt{ \log n}}  \rho^\ell } \leq d^{ \ell+50 \sqrt{ \log n} } n^{- c_0}.
$$
\end{theorem}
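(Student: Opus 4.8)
The plan is to bound the operator norm $\|(P^\ell)_{|\IND^\perp}\|$ by a moment/trace computation, exploiting the decomposition $P = MQ$ and the combinatorics of the random permutation $\sigma$. First I would observe that, since $\IND$ is a common eigenvector of $P$ and $P^\intercal$, the operator $R := P - \frac 1 n \IND \IND^\intercal$ satisfies $R^\ell = P^\ell - \frac 1 n \IND\IND^\intercal$ and $\|(P^\ell)_{|\IND^\perp}\| = \|R^\ell\|$, with $\|R^\ell\|^2 \le \|R^\ell (R^\ell)^* \| \le \Tr\big( (R^\ell (R^\ell)^*)^m \big)$ for an integer $m$ to be chosen of order $\sqrt{\log n}/\log d$ (so that $m\ell = O(\log n/\log d \cdot \sqrt{\log n}/\dots)$ — one balances $m$ so that $d^{m\ell}$ and the deviation terms match). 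Expanding this trace produces a sum over closed paths of length $2m\ell$ in $[n]$, where consecutive steps alternate between applying a factor $Q_{\sigma(x)y}$ (from $P$) and its adjoint. The centering by $\IND\IND^\intercal$ removes the "trivial" contribution and it is cleaner to work directly with the path expansion of $\Tr((P^\ell (P^\ell)^*)^m)$ and subtract the main term by hand.

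The heart of the argument is the \emph{tangle-free / non-backtracking-type decomposition} borrowed from \cite{MR3758726,bordenaveCAT}. One splits the closed paths into those whose associated graph (the multigraph on visited vertices with the visited $Q$- and $\sigma$-edges) is "tangle-free" — contains at most one cycle in any short window — and a negligible remainder of "tangled" paths whose count is controlled crudely by $d$ and the path length. For the tangle-free paths, I would take expectation over $\sigma$: the randomness enters only through the constraints $\sigma(x_i) = z_i$ along the path, and for a permutation chosen uniformly, the probability that $\sigma$ is consistent with $k$ distinct such constraints is $(n-k)!/n! \le (1+o(1)) n^{-k}$, with correlations controlled because the path is tangle-free. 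Each "free" vertex visited contributes a factor $n$ (from the summation) cancelling an $n^{-1}$ from the permutation probability, while the $Q$-weights along the path get bounded using $\NRMHS{Q}$ for generic edges and the relaxed sup-norm $\NRM{Q}^{(\delta)}_{1\to\infty}$ on the exceptional set $\mathcal E$ of size $<n^{1-\delta}$ (the $\delta$ here is exactly why the theorem needs $c_0 < \delta$ — visiting $\mathcal E$ costs a factor $n^{1-\delta}$ that must be absorbed). The parameter $d = \NRM{Q^\intercal Q}_{1\to 0}$ enters as the branching factor: given the entry point of a $P P^*$-block, the number of choices for the middle vertex is at most $d$, so the number of tangle-free path shapes on a given vertex set is at most $d^{O(m\ell)}$, which is the source of the $d^{\ell + 50\sqrt{\log n}}$ factor.

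Assembling these estimates, the expected trace is bounded by (number of path shapes) $\times$ (per-shape weight) $\le d^{O(m\ell)} \rho^{2m\ell} \cdot \mathrm{poly}$, and then a Markov inequality with the target threshold $e^{c_1\sqrt{\log n}}\rho^\ell$ yields the probability bound $d^{\ell + 50\sqrt{\log n}} n^{-c_0}$ after optimizing $m$. I expect the main obstacle to be the bookkeeping in the tangle-free path expansion: precisely classifying which vertices are "free" versus forced, handling the boundary effects where cycles occur, and making the $\sigma$-probability estimate $(n-k)!/n!$ rigorous in the presence of the (mild) dependencies — this is where the constants $50$ and the $e^{c_1\sqrt{\log n}}$ slack get spent. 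A secondary difficulty is controlling the tangled remainder: one must show that the contribution of paths with two or more cycles in a short window is suppressed by a power of $n$ strong enough to beat the $d^{O(m\ell)}$ and $\rho^{-2m\ell}$ factors, which requires choosing $\ell$ and $m$ in the right regime (both $O(\log n)$ up to the $\log d$ and $\sqrt{\log n}$ corrections) and using the tangle-free length cutoff carefully.
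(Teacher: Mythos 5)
Your overall frame (trace method on a centered, tangle-free--restricted power, with $m\sim\sqrt{\log n}$ and the $\sigma$-probabilities $(n)_k^{-1}$) matches the paper's, but the sketch has a genuine gap at the single most delicate point: how tangled paths are disposed of. You propose to keep them inside the expected trace as a ``negligible remainder ... whose count is controlled crudely by $d$ and the path length.'' This cannot work. The paper's introduction explains why: because $Q$ may have a low-dimensional invariant coordinate subspace, the event $\lambda_2=1$ has probability $\geq n^{-c}$, so $\dE\|(\underline P)^{\ell}\|^{2m}\geq n^{-c}$, which swamps $\rho^{2\ell m}$ in the regime $\ell m\gg\log n$; equivalently, the entropy of tangled path shapes (a path of length $\ell$ can wind around excess edges up to $\ell$ times, so the count is not $d^{O(m\ell)}$) defeats the $n^{-g}$ probability gain. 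The paper instead proves (Lemma \ref{le:tangle}) that with probability $1-O(\ell d^{\ell+2h}n^{-\delta})$ \emph{no} tangled path of length $\leq\ell$ occurs in $M$ at all, and only then takes expectations of the tangle-free--restricted matrices. This is also where the factor $d^{\ell+50\sqrt{\log n}}$ in the statement actually comes from (the union bound over tangled configurations, with $h=\lceil 20\sqrt{\log n}\rceil$ entering through the $(Q^{\intercal}Q)^h$ in the definition of a coincidence) --- not from the branching factor in the tangle-free path count, as you assert.

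A second missing ingredient: even on the tangle-free event, restricting to tangle-free paths does not commute with the rank-one centering, so $(\underline P)^{\ell}\neq\underline P^{(\ell)}$ and one cannot simply ``subtract the main term by hand.'' The paper needs the telescoping identity of Section \ref{sec:PD}, which writes $P^{(\ell)}$ as $\underline P^{(\ell)}$ plus terms $\frac1n\,\underline P^{(k-1)}(\IND\otimes\IND)P^{(\ell-k)}$ (killed on $\IND^{\perp}$ by bistochasticity) minus remainders $R^{(\ell)}_k$ supported on paths that are tangle-free on $[1,k-1]$ and $[k+1,\ell]$ but tangled overall; these $R^{(\ell)}_k$ require their own trace-method estimate (Proposition \ref{prop:normR}) with a separate combinatorial lemma, since their underlying graphs have no spanning tree in the usual sense (every component carries a cycle). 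Finally, note that the raw estimate $\dP(\sigma(a_i)=b_i,\,i\leq k)=(n)_k^{-1}$ you invoke is not sharp enough on its own: the proof needs the extra factor $(3k/\sqrt n)^{a_1}$ from Proposition \ref{prop:Epath} for consistent arcs of multiplicity one, which is what forces most arcs to be repeated and makes the Hilbert--Schmidt norm $\rho^{2\ell m}$, rather than a larger quantity, emerge from the $Q$-weights.
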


To prove Theorem \ref{th:main2}, it would seem natural to introduce the matrix $\underline P =  \underline M Q$ where
\begin{equation}\label{eq:defMbar}
\underline M = M - \frac{1}{n} \cdot \IND \otimes \IND = M - \dE M,
\end{equation}
and
$$
\IND \otimes \IND = \IND \IND^\intercal.
$$
 Indeed, from  \eqref{eq:PerronQ},
$$
 \| (P^{\ell}) _{ | \IND^{\intercal}} \|=  \| (\underline P) ^\ell \|.
$$

A usual route would then be estimating the operator norm $\| (\underline P) ^\ell \|$ thanks to the high trace method. That is, we use for any  real random matrix $B$ and integer $m \geq 1$,
\begin{equation}\label{eq:illC}
\dE \| B \|^{2m}  = \dE \| BB^\intercal \|^m  \leq \dE \tr [\PAR{ BB^\intercal }^m ]
\end{equation}

Our problem requires to use the above inequality with $\ell m \gg \log n$. However, as explained above, due to the potential presence of low dimensional invariant subspaces in $P$,  the event $\lambda_ 2 = 1$ has probability at least $n ^{-c}$ and hence $\dE \| (\underline P)^\ell \| ^{2m}  \geq   n^{-c} $,  which may be much  larger than $\rho (1+ \veps)^{2\ell m}$ for $\veps$ small enough, in the regime $\ell m \gg \log n$.

To circumvent this difficulty, we have to remove beforehand some events.
 We will then use the crucial fact that with high probability the random matrix $M$ is {\it free of $\ell$-tangles} with the matrix $Q$, where a tangle is a path of length $\ell$ which contains at least two cyles in a graph associated to the non-zero entries of $P = MQ$ and $Q$ or meet the subset $\mathcal{E} \subset [n]$ (see Definition \ref{def2} below for a precise definition). On this event, we will have the matrix identity
$$
P^{\ell} = P^{(\ell)},
$$
where $P^{(\ell)}$ is a matrix where the contribution of all tangles will vanish at once (see \eqref{def-A-free} below).  Thanks to basic linear algebra, we will then project the matrix $P^{(\ell)}$ on the orthogonal of the vector $\IND$ and give a deterministic upper bound  of
 $
  \| (P^{\ell}) _{ | \IND^{\intercal}} \|
 $
in terms of the operator norms of new matrices which will be expressed as weighted paths of length at most $\ell$.

In the remainder of the proof, we will use the high trace method to upper bound the operator norms of these new matrices: if $A$ is such matrix, we will use \eqref{eq:illC} for some integer $m$ of order $\sqrt{ \log n}$. By construction, the expression on the right-hand side of \eqref{eq:illC} is then an expected contribution of some weighted paths of lengths $2 m \ell $ of order $\ell \sqrt{\log n}$.

The study of the expected contribution of weighted paths in \eqref{eq:illC} will have a probabilistic and a combinatorial part.  The necessary probabilistic computations on the random permutation are gathered in Section \ref{sec:RP}.  In  Section \ref{sec:PC}, we will use these computations together with combinatorial upper bounds on directed paths to deduce sharp enough bounds on our operator norms. The success of this step will essentially rely on the fact that the contributions of tangles vanish in $P^{(\ell)}$. Finally, in Section  \ref{sec:end}, we gather all ingredients to conclude.

In the remainder of the paper, we let $\mathcal{E}$ be a fixed subset of $[n]$ of cardinality at most $n^{1-\delta}$ which achieves the minimum in \eqref{eq:defAd} for $A = Q$.

\section{Path decomposition}
\label{sec:PD}

In this section, we fix $\sigma \in \mathbb{S}_n$ with permutation matrix $M$ and a positive integer $\ell$.
%
Our aim is to derive a deterministic upper bound on  the norm of $(P^\ell )_{\IND^\perp}$ defined in  \eqref{eq:basicl} (in forthcoming Lemma \ref{le:decompBl}) when $M$ and $Q$ satisfy a property which will be called $\ell$-tangled free. This can be studied by an expansion of paths in the graph.  To this end, we introduce some definition.

\begin{definition} \label{def1}
A {\em path of length $k$} is a sequence $ \gamma = (x_1,  y_1,  x_2 ,  \ldots,  x_{k} ,  y_{k} , x_{k+1} )$, with  $x_t , y_t \in [n]$ and $ Q_{ y_t x_{t+1} }  > 0$.   The set of paths of length $k$ is denoted by $\Gamma^{k}$.  If $x, y \in [n]$, we denote by $\Gamma^{k}_{xy}$   paths in $\Gamma^k$ such that $x_1 = x$, $x_{k+1} = y$.

A {\em subpath} of $\gamma$ is a path of the form $ (x_s, y_s , \ldots,  y_t, x_{t+1} )$ with $1 \leq s \leq t \leq k$, or, if $x_i = x_{j}$ for some $1 \leq i < j \leq n$, a path of the form $ (x_s, y_s , \ldots, x_i, y_j  , \ldots, x_{t+1} )$  with $1 \leq s \leq i <  j \leq t \leq k$.
\end{definition}

 We will use the convention that a product over an empty set is equal to $1$ and the sum over an empty set is $0$. By construction, for integer $k \geq 0$, from \eqref{eq:defP} we find that
\begin{equation}\label{eq:Pk}
(P^{k}) _{xy} = \sum_{\gamma \in \Gamma^{k} _{xy}} \prod_{t=1}^{k} M  _{x_{t} y_{t}} Q _{y_{t} x_{t+1}} ,
\end{equation}
where the sum is over all paths of length $k$ from $x$ to $y$.  Note that, in the above expression for $P^k$, only the summand depends on the permutation $\sigma$.
Observe that $\underline M$ defined in \eqref{eq:defMbar} is the orthogonal projection of $M$ on $\IND^\perp$.
The matrix $(\underline P)^k = (\underline M Q)^k $ can similarly be written as
$$
((\underline P)^{k}) _{xy} = \sum_{\gamma \in \Gamma^k_{xy}} \prod_{t=1}^{k} \underline M  _{x_{t} y_t} Q _{y_t x_{t+1}}.
$$
 As pointed in introduction, the matrix $(\underline P)^k$ is orthogonal projection of $P^k$ on $\IND^\perp$ but it is not suited for our probabilistic analysis.

 We will  now introduce the central definition of {\it tangled paths}. Recall that $\mathcal{E}\subset [n]$ is a fixed set of cardinality at most $n^{1- \delta}$ which achieves the minimum in \eqref{eq:defAd}.

\begin{definition}\label{def2}
Fix  the integer $h :=  \lceil 20 \sqrt{\log n} \rceil$.
\begin{itemize}
\item A {\em coincidence} is a path $(x_1, y_1, \ldots , x_t, y_t , x_{t+1})$ with  $(x_1, \ldots, x_t)$ {\bf pairwise distinct} such that $ (( Q^\intercal Q )^h )_{x_1 x_{t+1}}    > 0$.
\item An {\em $\mathcal{E}$-coincidence} is a path $(x_1, y_1, \ldots , x_t, y_t , x_{t+1})$ with  $(x_1, \ldots, x_t)$ {\bf pairwise distinct} such that $x_1 = x_{t+1}$ is in $\mathcal{E}$.
\item A  path  $\gamma$ is tangle-free if it contains (as subpaths) at most one coincidence, no $\mathcal{E}$-coincidence. It is tangled otherwise. The subsets of tangle-free paths in $\Gamma^k$ and $\Gamma^{k}_{xy}$ will be denoted by $F^k$ and $F^{k} _{xy}$ respectively.
\item The pair $(M,Q)$ is $\ell$-tangle-free if for any $k \in [\ell]$ and $\gamma  = (x_1,  y_1,  x_2 ,  \ldots,  x_{k} ,  y_k , x_{k+1} ) \in \Gamma^k \backslash F^k$, we have
$$
 \prod_{t=1}^{k} M  _{x_{t} y_t} = 0.
$$
\end{itemize}
\end{definition}

Importantly, note that the definition of paths, coincidences and tangles do not depend on $\sigma$, they depend only on the non-zero entries of $Q$. For example, the set $\Gamma^k$ does not depend on the permutation matrix $M$. Observe also that the condition $(( Q^\intercal Q )^h )_{x x'} >0$ is equivalent to the existence of an integer $0 \leq k \leq h$ and sequences  $(x_0,\ldots,x_k)$, $(y_1,\ldots,y_k)$ such that $x_0 = x$, $x_k = x'$, $(x_0,\ldots,x_k)$ pairwise distinct and for any $s \in [k]$, $\min( Q_{y_s x_{ s-1} } , Q_{y_s, x_s} ) > 0$.

\begin{remark}
Note that by our definition, a path following multiple times the same cycle may not tangled.  For example, assume that $x_1, \cdots, x_t$ are points in $[n] \backslash \mathcal{E}$ such that there does not exist an integer $0 \leq s < h $ and $i \ne j$ with $Q^s_{x_i ,x_j} >0$. Then the following path
$$
\gamma= (x_1,  y_1, x_2, y_2, x_3, y_3, x_4, y_4,  x_2,y_2,x_3, y_3,x_4,y_4, x_2, y_2, x_3, y_3, x_4, y_4, x_5)
$$
is tangle-free. Note however that if one of the $x_j$'s in $\mathcal{E}$ then the path is tangled.
\end{remark}

If the pair $(M,Q)$ is $\ell$-tangle-free then by definition, for any $ k \in[\ell]$ and for any $\gamma$ in $\Gamma_{xy}^k \backslash F_{xy}^k$, the summand on the right-hand side of \eqref{eq:Pk} is zero. Therefore,
\begin{equation}\label{eq:BkBk}
P^{k}   = P^{(k)},
\end{equation}
where $P^{(k)}$ is defined by the following formula
\begin{align}\label{def-A-free}
(P^{(k)} ) _{xy}  : = \sum_{\gamma \in F^{k} _{xy}}  \prod_{t=1}^{k} M  _{x_{t} y_t} Q _{y_t x_{t+1}}.
\end{align}
 For $ k \in[ \ell]$,  we define similarly the matrix $\underline P^{(k)}$ by
\begin{equation}\label{eq:defuA}
( \underline P^{(k)}) _{xy} =\sum_{\gamma \in F^{k} _{xy}} \prod_{t=1}^{k} ( \underline M  )_{x_t y_t } Q _{y_t x_{t+1}}.
\end{equation}
Note that it is not necessarily true that even if the pair $(M, Q)$ is $\ell$-tangle-free  that $(\underline P)^\ell = \underline P^{(\ell)}$. Nevertheless, we may still express $P^{(\ell)}v$ in terms of $\underline P^{(\ell)}v$ for all $v \in \IND^\perp$ at the cost of adding an explicit error term. We start with the following telescopic sum decomposition:
\begin{eqnarray}\label{eq:iopl}
(P^{(\ell)}) _{xy} &  =  & (\uP^{(\ell)} )_{xy}  + \sum_{\gamma \in F^{\ell} _{xy}}    \sum_{k = 1}^\ell \prod_{t=1}^{k-1} (\underline M _{x_{t} y_t})  Q_{y_t x_{t+1} } \cdot   \frac {Q_{y_{k} x_{k+1}}}{ n}   \cdot \prod_{ t= k+1}^\ell  M_{x_{t} y_t} Q_{y_t x_{t+1} },
\end{eqnarray}
which is a consequence of the identity,
$$
\prod_{t=1}^\ell a_t = \prod_{t=1}^\ell b_t  + \sum_{k=1}^{\ell}\prod_{t=1}^{k-1} b_t \cdot  ( a_k - b_k)  \cdot \prod_{t = k+1}^{\ell} a_t.
$$
We now rewrite \eqref{eq:iopl} as a sum of matrix products for lower powers of $\uP^{(k)}$ and $P^{(k)}$ up to some remainder terms.  For $ k  \in [\ell]$,  let $T^{\ell,k}$ denote the set of paths $\gamma =  (x_1, y_1, \ldots, y_\ell , x_{\ell+1})$ such that  (i) $\gamma' = (x_1, y_1, \ldots, y_{k-1}, x_{k} )  \in F^{k-1}$, (ii) $\gamma'' = ( x_{k+1}, y_{k+1}, \ldots , y_{\ell}, x_{\ell+1}) \in F^{\ell- k}$, (iii) $\gamma$ is  tangled. We have the following picture:
$$
\gamma =(\gamma', y_{k}, \gamma'') =  (\underbrace{x_1, y_1, \cdots, y_{k-1}, x_k}_{\gamma' \in F^{k-1}}, \, y_k, \, \underbrace{x_{k+1}, y_{k+1}, \cdots, y_\ell, x_{\ell+1}}_{\gamma'' \in F^{\ell-k}}).
$$
Then, if $T^{\ell,k}_{ xy}$ is the subset of $\gamma \in T^{\ell,k}$ such that  $x_1  =x$ and $x_{\ell+1}  =y$,  we set
\begin{equation}\label{eq:defR}
(R^{(\ell)}_{k} )_{xy}   =  \sum_{\gamma \in T^{\ell,k} _{xy}}  \prod_{t=1}^{k-1} (\underline M _{x_{t} y_t})  Q_{y_t x_{t+1} } \cdot   Q_{y_{k} x_{k+1}}   \cdot \prod_{ t= k+1}^\ell  M_{x_{t} y_t} Q_{y_t x_{t+1} }.
\end{equation}

Let us rewrite \eqref{eq:iopl} as
$$
(P^{(\ell)}) _{xy}   =  (\uP^{(\ell)} )_{xy}  +  \frac{1}{n}   \sum_{k = 1}^\ell \underbrace{ \sum_{\gamma \in F^{\ell} _{xy}}  \prod_{t=1}^{k-1} (\underline M _{x_{t} y_t})  Q_{y_t x_{t+1} } \cdot   Q_{y_{k} x_{k+1}}   \cdot \prod_{ t= k+1}^\ell  M_{x_{t} y_t} Q_{y_t x_{t+1} }}_{\text{denoted by $S(k, x, y)$}}.
$$
For fixed $k \in [\ell]$, let us rewrite the summand $S(k, x, y)$. Using the following equality,
$$
F^{\ell}_{xy} \bigsqcup T_{xy}^{\ell, k} =\bigsqcup_{x_k \in [n]} \bigsqcup_{x_{k+1} \in [n]}  \bigsqcup_{y_{k} \in [n] : Q_{y_{k} x_{k+1} >0} } \Big\{(\gamma', y_{k}, \gamma'')\Big| \gamma'\in F_{x x_k}^{k-1}, \gamma'' \in F_{x_{k+1} y}^{\ell-k}\Big\},
$$
and using the definition  \eqref{eq:defR} for $(R^{(\ell)}_{k} )_{xy}$,  we obtain that
\begin{align*}
S(k, x, y) & = \sum_{x_k \in [n]} \sum_{x_{k+1} \in [n]} \sum_{y_{k} \in [n] }  \sum_{\gamma'\in F_{x x_k}^{k-1}}\sum_{\gamma'' \in F_{x_{k+1} y}^{\ell-k}  }   \prod_{t=1}^{k-1} (\underline M _{x_{t} y_t})  Q_{y_t x_{t+1} } \cdot   Q_{y_{k} x_{k+1}}   \cdot \prod_{ t= k+1}^\ell  M_{x_{t} y_t} Q_{y_t x_{t+1} }  - (R^{(\ell)}_{k} )_{xy}
\\
& =   \sum_{x_k \in [n]} \sum_{x_{k+1} \in [n]} \sum_{y_{k} \in [n] }   (\uP^{(k-1)})_{x x_{k}} \cdot  Q_{y_{k} x_{k+1} }  \cdot (P^{(\ell-k)})_{x_{k+1}y}  - (R^{(\ell)}_{k} )_{xy}
\\
& =  \sum_{x_k \in [n]} \sum_{x_{k+1} \in [n]}    (\uP^{(k-1)})_{x x_{k}}   \cdot (\IND \otimes \IND \cdot Q)_{x_k x_{k+1}} \cdot (P^{(\ell-k)})_{x_{k+1}y}  - (R^{(\ell)}_{k} )_{xy}
\\
& = \Big(\uP^{(k-1)} (\IND \otimes \IND  ) P^{(\ell-k)}\Big)_{xy}- (R^{(\ell)}_{k} )_{xy},
\end{align*}
where at the last line we have used that $Q$ is bi-stochastic: $\IND \otimes \IND \cdot  Q = \IND \otimes (  \IND^\intercal Q) = \IND \otimes \IND$.  Therefore,
\begin{eqnarray*}
P^{(\ell)}   &= & \uP^{(\ell)}   +    \frac 1 {n}  \sum_{k = 1} ^{\ell}  \uP^{(k-1)} (  \IND \otimes \IND  )  P^{(\ell - k)}  -   \frac 1 {n}     \sum_{k = 1}^{\ell} R^{(\ell)}_{k},
 \end{eqnarray*}
where we have set $P^{(0)} = \uP^{(0)}  = I$.  Observe that if $(M,Q)$ is $\ell$-tangle-free, then \eqref{eq:BkBk} and $P$ bi-stochastic imply
$$
\IND^\intercal P^{(\ell - k)}  = \IND^\intercal P^{\ell - k} =  \IND^\intercal.
$$
Hence, if $(M,Q)$ is $\ell$-tangle free  and $\langle v , \IND \rangle = 0$, $\|v\|_2 = 1$,  we find
\begin{eqnarray*}
 \| P^{\ell} v \|_2 &   \leq &  \|  \uP^{(\ell)}  \|   +  \frac 1{ n}   \sum_{k = 1}^\ell \| R^{(\ell)}_{k} \|.\label{eq:decompBkx}
\end{eqnarray*}

We mention here that the method used for the proof of the above inequality appeared already in \cite[Section 3]{Bal15}, \cite[Lemma 6]{BCZ} and \cite[Section 3]{bordenaveCAT}.

We arrive at the following lemma.

\begin{lemma}\label{le:decompBl}
Let $\ell \geq 1$ be an integer and $\sigma  \in  \mathbb{S}_n$ with permutation matrix $M$ be such that the pair $(M,Q)$ is $\ell$-tangle-free. Then,
$$
 \| (P^{\ell}) _{ | \IND^{\intercal}} \| \leq  \|  \uP^{(\ell)} \|  +  \frac 1{n}   \sum_{k = 1}^\ell \| R^{(\ell)}_{k} \| .$$
\end{lemma}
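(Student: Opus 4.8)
The plan is to make the telescoping computation that precedes the lemma statement rigorous, since essentially all the work has already been laid out in the excerpt. Concretely, I would proceed as follows. First, assuming $(M,Q)$ is $\ell$-tangle-free, invoke \eqref{eq:BkBk} to replace $P^\ell$ by $P^{(\ell)}$, so that it suffices to bound $\|(P^{(\ell)})_{|\IND^\perp}\|$. Then I would carry out the telescopic expansion: starting from the elementary algebraic identity $\prod_{t=1}^\ell a_t = \prod_{t=1}^\ell b_t + \sum_{k=1}^\ell \prod_{t=1}^{k-1} b_t \cdot (a_k - b_k) \cdot \prod_{t=k+1}^\ell a_t$ applied entrywise with $a_t = M_{x_t y_t} Q_{y_t x_{t+1}}$ and $b_t = \underline M_{x_t y_t} Q_{y_t x_{t+1}}$, and using $M_{x_k y_k} - \underline M_{x_k y_k} = 1/n$ from \eqref{eq:defMbar}, I obtain \eqref{eq:iopl}. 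This expresses $(P^{(\ell)})_{xy}$ as $(\uP^{(\ell)})_{xy}$ plus $\frac1n\sum_{k=1}^\ell S(k,x,y)$.

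Next I would identify each $S(k,x,y)$ with a matrix product up to a remainder. The key combinatorial fact is the disjoint-union decomposition
$$
F^{\ell}_{xy} \bigsqcup T_{xy}^{\ell, k} =\bigsqcup_{x_k, x_{k+1} \in [n]} \ \bigsqcup_{y_{k} : Q_{y_{k} x_{k+1}}>0 } \Big\{(\gamma', y_{k}, \gamma'') : \gamma'\in F_{x x_k}^{k-1},\ \gamma'' \in F_{x_{k+1} y}^{\ell-k}\Big\},
$$
which I should justify carefully: concatenating a tangle-free path of length $k-1$ with a tangle-free path of length $\ell-k$ via an arbitrary middle step either yields a tangle-free path of length $\ell$, or a path that is tangled but whose two halves (after removing the $k$-th step) are tangle-free — this is exactly the definition of $T^{\ell,k}$. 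Summing over this decomposition and using the definitions \eqref{eq:defuA}, \eqref{def-A-free}, \eqref{eq:defR}, I get $S(k,x,y) = (\uP^{(k-1)}(\IND\otimes\IND \cdot Q)P^{(\ell-k)})_{xy} - (R^{(\ell)}_k)_{xy}$, and then the bistochasticity of $Q$ (namely $\IND^\intercal Q = \IND^\intercal$, so $\IND\otimes\IND \cdot Q = \IND\otimes\IND$) collapses this to $(\uP^{(k-1)}(\IND\otimes\IND)P^{(\ell-k)})_{xy} - (R^{(\ell)}_k)_{xy}$. Assembling everything yields the matrix identity
$$
P^{(\ell)} = \uP^{(\ell)} + \frac1n\sum_{k=1}^\ell \uP^{(k-1)}(\IND\otimes\IND)P^{(\ell-k)} - \frac1n\sum_{k=1}^\ell R^{(\ell)}_k,
$$
with the conventions $P^{(0)}=\uP^{(0)}=I$.

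Finally, I would restrict to $v \in \IND^\perp$ with $\|v\|_2 = 1$. Since $(M,Q)$ is $\ell$-tangle-free, \eqref{eq:BkBk} and bistochasticity of $P$ give $\IND^\intercal P^{(\ell-k)} = \IND^\intercal P^{\ell-k} = \IND^\intercal$, hence $(\IND\otimes\IND)P^{(\ell-k)}v = \IND (\IND^\intercal P^{(\ell-k)} v) = \IND\langle \IND, v\rangle = 0$. Therefore the entire middle sum annihilates $v$, and $P^\ell v = P^{(\ell)} v = \uP^{(\ell)} v - \frac1n\sum_{k=1}^\ell R^{(\ell)}_k v$, so by the triangle inequality $\|P^\ell v\|_2 \le \|\uP^{(\ell)}\| + \frac1n\sum_{k=1}^\ell \|R^{(\ell)}_k\|$. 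Taking the supremum over such $v$ and recalling \eqref{eq:basicl} gives the claimed bound. I expect the only genuinely delicate point to be verifying the set-partition identity above — in particular checking that the concatenation $(\gamma', y_k, \gamma'')$ of two tangle-free pieces is either tangle-free or lands in $T^{\ell,k}$, with no overlap and no omission, where one must be careful about how coincidences and $\mathcal E$-coincidences can straddle the middle vertex; everything else is bookkeeping with the definitions already in place.
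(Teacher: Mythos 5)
Your proposal is correct and follows essentially the same route as the paper: replace $P^\ell$ by $P^{(\ell)}$ via tangle-freeness, telescope between $M$ and $\underline M$ using $M-\underline M=\frac1n\IND\otimes\IND$, identify each summand $S(k,x,y)$ through the disjoint decomposition of $F^\ell_{xy}\sqcup T^{\ell,k}_{xy}$ into concatenations of tangle-free halves, collapse the middle term with $\IND^\intercal Q=\IND^\intercal$ and $\IND^\intercal P^{(\ell-k)}=\IND^\intercal$, and conclude by the triangle inequality on $\IND^\perp$. The delicate point you flag (the set-partition identity) is exactly the one the paper also relies on without further elaboration, so your write-up matches the paper's argument in both structure and level of detail.
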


\section{Computations on random permutation}
\label{sec:RP}

In this section, we check that if $\sigma$ is uniformly distributed on $\mathbb{S}_n$ then, with high probability the pair $(M,Q)$ is $\ell$-tangle-free provided that $\ell$ is not too large. We will then state a proposition on the expected product of entries of the permutation matrix $\underline M$.   Recall that $h=  \lceil 20 \sqrt{\log n} \rceil$ was defined in Definition \ref{def2}.

\begin{lemma}\label{le:tangle}
There exists $c >0$  such that for any integer $\ell \geq 1$, the pair $(M,Q)$ is $\ell$-tangle free with probability at least $1 -  c \ell  d^{\ell+2h} n^{-\delta}$.
\end{lemma}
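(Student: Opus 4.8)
The plan is to bound the probability that $(M,Q)$ fails to be $\ell$-tangle-free by a union bound over the ``bad'' configurations, i.e.\ over paths $\gamma \in \Gamma^k\setminus F^k$ for $k\in[\ell]$ with $\prod_t M_{x_ty_t}\ne 0$. By Definition \ref{def2}, such a $\gamma$ either contains an $\mathcal E$-coincidence or contains two coincidences as subpaths. The key observation is that, since the structure of paths and coincidences depends only on the support of $Q$ (not on $\sigma$), the randomness enters only through the requirement $\prod_{t=1}^k M_{x_ty_t}=1$, which forces $\sigma(x_t)=y_t$ for all $t$; for a path visiting $u$ distinct source–vertices among the $x_t$ this event has probability at most $(n-u)!/n!\le n^{-u}$ (and $=0$ if the map $x_t\mapsto y_t$ is inconsistent). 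So the whole argument reduces to \emph{counting} the relevant $\gamma$ weighted by $n^{-(\text{number of distinct }x_t)}$.

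The main step is the combinatorial/probabilistic estimate for each of the two failure modes. First I would handle the $\mathcal E$-coincidence: such a subpath is a path $(x_1,y_1,\dots,x_t,y_t,x_{t+1})$ with $x_1=x_{t+1}\in\mathcal E$ and $x_1,\dots,x_t$ pairwise distinct, hence $t$ distinct sources, contributing weight $\le n^{-t}$, while the number of ways to choose it is $\le |\mathcal E|\, d^{\,2t}$ or so — using that from each $x_s$ there are at most $\NRM{Q^\intercal Q}_{1\to 0}$-ish choices of next vertex once one recalls that $Q_{y_sx_{s+1}}>0$ forces $x_{s+1}$ into a set of size bounded in terms of $d$ (more precisely the two-step support $Q^\intercal Q$ has $\NRM{Q^\intercal Q}_{1\to0}=d$, and one tracks pairs $(y_s,x_{s+1})$). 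Embedding this $\mathcal E$-coincidence inside a path of length $k\le\ell$ and then extending to a full $\gamma\in\Gamma^k$ costs an extra factor $d^{O(\ell)}$ and at most $n$ for each genuinely new source vertex, but since the total number of distinct sources is at most the number of $n$-factors we gain, the net bound for this mode is $\le c\,\ell\, |\mathcal E|\, d^{O(\ell)} n^{-1}\le c\,\ell\, d^{O(\ell)} n^{-\delta}$, using $|\mathcal E|<n^{1-\delta}$. Second, for the two-coincidences mode: a coincidence is a subpath $(x_1,\dots,x_{t+1})$ with $x_1,\dots,x_t$ distinct and $((Q^\intercal Q)^h)_{x_1x_{t+1}}>0$, i.e.\ $x_{t+1}$ is $h$ steps (in the $Q^\intercal Q$ graph) from $x_1$; the point of ``two coincidences'' is that the path must close up twice, so the number of distinct source vertices is at least (total length) $-$ (a bounded deficit), and since each coincidence forces a reuse of a vertex, the weight $n^{-(\#\text{distinct})}$ beats the counting factor $d^{O(\ell+h)}$ by an extra power of $n$. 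Collecting the exponents carefully — the $h$ enters because a coincidence can have internal length up to $h$, giving the $d^{2h}$ factor — yields the claimed bound $1-c\ell\, d^{\ell+2h} n^{-\delta}$.

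The part requiring the most care is the bookkeeping of exponents: one must verify that in both failure modes the number of \emph{distinct} vertices $x_t$ appearing (which controls the negative power of $n$ from the permutation estimate) exceeds the ``free'' path-length (which controls the positive power of $d$ from enumeration) by enough to leave a surplus of $n^{-1}$ in the two-coincidence case and $n^{-\delta}$ after invoking $|\mathcal E|<n^{1-\delta}$ in the $\mathcal E$-coincidence case. This is exactly the standard ``tangle-free with high probability'' lemma from \cite{MR3758726,bordenaveCAT}, adapted here to the weighted graph of $Q$ rather than a random graph, and to the extra $\mathcal E$ set; I would set it up by fixing $k$, fixing the positions of the coincidence(s) inside $[k]$ (at most $k^2\le\ell^2$ choices, absorbed), enumerating the coincidence(s) and the remaining path segments greedily vertex-by-vertex with the degree bound $d$, applying the $\sigma$-estimate $\le n^{-\#\{\text{distinct }x_t\}}$, and summing over $k\in[\ell]$.
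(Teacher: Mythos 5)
Your proposal follows essentially the same route as the paper's proof: a union bound over the bad configurations ($\mathcal{E}$-coincidences on one hand, paths containing two coincidences --- overlapping or nested --- on the other), enumerated via the degree bound $d$ for the $y$'s and the $d^{h}$ reach of $(Q^\intercal Q)^h$ for the coincidence endpoints, weighted by the probability $1/(n)_t$ that the $t$ prescribed arcs occur under $\sigma$. One small slip to fix in the write-up: $(n-u)!/n! = 1/(n)_u \geq n^{-u}$, not $\leq$; the paper sidesteps this by keeping the exact ratio $(n)_{t-1}/(n)_t \leq 2/n$ after assuming without loss of generality that $\ell \leq n/2$.
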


\begin{proof}
We may assume without loss of generality that $\ell \leq n/2$ (otherwise the content of the lemma is empty).
Let us say that a path $\gamma =(x_1, y_1 , \ldots, y_k,   x_{k+1})$ {\em occurs}  if for any $t \in [k]$, $
M  _{x_{t} y_t} = 1$ (that is $\sigma(x_t) = y_t$). If the pair $(M,Q)$ is $\ell$-tangled then at least one of the two following paths occurs for some integers with $1 \leq k + k' \leq \ell$, $1 \leq i \leq k+k'$ and $1 \leq k \leq j \leq k+k' +1$:
\begin{enumerate}[]
\item $(I_{k,k',i})$  There exists a path $(x_1, y_1 , \ldots,   x_{k + k' +1})$, where all $x_t$'s are pairwise distinct except possibly $x_1 = x_{k+1}$ and $x_i = x_{k+k'+1}$ such that $(x_1, y_1 , \ldots,   x_{k+1})$  and $(x_{i}, y_{i} , \ldots,  x_{k+ k'+1})$ are distinct coincidences.
\item  $(I'_{k, k', j})$  There exists a path $(x_1, y_1 , \ldots,  x_{k + k' +1})$ where all $x_t$'s are pairwise distinct except possibly  $x_1 = x_{k + k' +1}$ such that  $(x_{k} ,  y_k , \cdots, x_{j})$ is a coincidence and $(x_1, y_1 ,   \ldots  x_{k + k' +1})$ is a coincidence.
\item$(\II_{k})$ There exists a path $(x_1, y_1 , \ldots, y_k,   x_{k+1})$ which is an $\mathcal{E}$-coincidence.
\end{enumerate}

The configuration $I_{k,k',i}$ describes the situation when $\gamma$ has two consecutive coincidences, $I'_{k,k',j}$ accounts for the possibility that one coincidence is contained in another. $\II_k$ describes the possibility of a closed cycle containing an element in $\mathcal{E}$. 

Let us bound the probability of the two different configurations. Recall that if $\{ a_1, \ldots , a_{t} \}$ and $\{ b_1, \ldots , b_t\}$ are two subsets of cardinal $t$ then
\begin{equation}
\label{eq:dpejd}\dP ( \sigma (a_1) = b_1 , \ldots  , \sigma (a_t) = b_t)  =  \frac{1}{ (n)_t}.
\end{equation}
where $(n)_t = n (n-1) \cdots (n-t+1)$.

Let us start with  $I_{k,k',i}$. Then, there are $(n)_{k+k'-1}$ choices for $(x_j)$, $j \notin \{ k+1, k+k' +1\}$, at most $d^{h}$ choices for $x_{k+1}$ and $x_{k+k'+1}$ and $\| Q^\intercal \|_{1 \to 0} ^{k+k'} \leq d^{k + k'}$ choices for the $y_t$'s (since $Q_{y_t x_{ t+1}} >0$ by the definition of a path). We apply \eqref{eq:dpejd} with $t = k + k'$ and $a_{s} = x_s$,  $b_s = y_{s}$, we arrive at
 $$
 \dP ( I_{k,k',i} ) \leq \frac{ d^{k+k'}  d^{2h} (n)_{k + k'-1}}{ (n)_{k+k'}  } \leq 2 \frac{d^{k+k'+2h}}{n},
 $$
(where the last inequality uses $\ell \leq n/2$).

The same argument gives
 $$
 \dP ( I'_{k,k',j} ) \leq \frac{ d^{k+k'}  d^{2h} (n)_{k + k'-1}}{ (n)_{k+k'}  } \leq 2 \frac{d^{k+k'+2h}}{n}.
 $$

Similarly, for $\II_k$ there are at most $|\mathcal{E}|$ choices for $x_1$, $(n)_{k-1}$ choices for $(x_j)$, $j \notin \{1\}$ and $d^{k}$ choices for the $y_t$'s. From \eqref{eq:dpejd}, we get
$$
 \dP ( \II_{k} ) \leq \frac{ d^{k+h} |\mathcal{E}| (n)_{k-1}}{ (n)_{k}  } \leq 2 \frac{d^{k}|\mathcal{E}|}{n} \leq 2 \frac{d^{k}}{n^{\delta}},
$$
(where we have used the assumption that $| \mathcal{E}|\le n^{1-\delta}$).
%
\end{proof}

Let  $\xbf  = (x_1, \ldots , x_k), \ybf = (y_1, \ldots, y_k) \in [n]^k$. We are interested in estimating for $0 \leq k_0 \leq k$,
$$
\dE \prod_{t= 1} ^{k_0}  \underline M_{x_{t} y_{t}} \prod_{t= k_0+1} ^{k}  M_{x_{t} y_{t}}.
$$
To this end, the {\em arcs} of $(\xbf, \ybf)$ is defined as $$A_{\xbf \ybf}= \{ (x_t,y_t) :  t \in [k] \}.$$ The cardinal of $A_{\xbf \ybf}$ is at most $k$.  The {\em multiplicity} of $e \in A_{\xbf \ybf}$ is $m_e = \sum_{t=1}^k \mathbbm{1}( (x_t, y_t) = e)$. An arc $e = (x,y)$  is {\em consistent}, if $\{ t :   (x_t , y_t) = (x,y) \}  = \{ t :  x_t = x \} =  \{ t :  y_t = y \} $.  It is inconsistent otherwise. The following proposition is proved in  \cite[Proposition 27]{bordenaveCAT}.

\begin{proposition}
\label{prop:Epath}
There exists a   constant $c>0$ such that for any $\xbf  = (x_1, \ldots , x_k), \ybf = (y_1, \ldots, y_k) \in [n]^k$ with $2 k \leq \sqrt{n}$ and any $k_0 \leq k$, we have,
$$
\ABS{ \dE \prod_{t= 1} ^{k_0}  \underline M_{x_{t} y_{t}} \prod_{t= k_0+1} ^{k}  M_{x_{t} y_{t}}} \leq c \, 2^{b} \PAR{ \frac{1 }{ n }}^{a} \PAR{ \frac{  3k   }{ \sqrt{n} }}^{a_1},
$$
where $a = | A_{\xbf \ybf} |$,  $b$ is the number of inconsistent arcs of $(\xbf,\ybf)$ and $a_1$ is the number of  $1 \leq t \leq k_0$ such that $(x_t, y_t)$ is consistent and has multiplicity $1$ in $A_{\xbf \ybf}$.
\end{proposition}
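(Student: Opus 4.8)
The plan is to follow the proof of \cite[Proposition 27]{bordenaveCAT}, whose main ideas I sketch. Write $M_{xy}=\IND(\sigma(x)=y)$, so that the quantity to be estimated is the expectation of a product, over the positions $t\in[k]$, of terms $M_{e_t}$ or $\underline M_{e_t}$, with $e_t=(x_t,y_t)$ and $\underline M_e=M_e-1/n$. Since $M_e\in\{0,1\}$ one has $M_e^j=M_e$ for $j\ge1$, and the binomial theorem gives
$$
\underline M_e^{\,m}=(-1/n)^m+c_e M_e,\qquad c_e:=(1-1/n)^m-(-1/n)^m,\qquad |c_e|\le 1+\tfrac1n .
$$
Grouping positions by the arc they carry and splitting the arcs of $A_{\xbf\ybf}$ into those with an occurrence in the ``$M$-block'' $\{k_0+1,\dots,k\}$ and those lying entirely in the ``centered block'' $\{1,\dots,k_0\}$, one rewrites the product as a product over distinct arcs in which every $M$-block arc contributes $M_e$ times a scalar in $(0,1]$, and every centered-block arc contributes $(-1/n)^{m_e}+c_e M_e$, the scalar branch $(-1/n)^{m_e}$ carrying a factor $n^{-m_e}$.

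Next I would expand $\prod_e\big[(-1/n)^{m_e}+c_eM_e\big]$ over the \emph{inconsistent} centered-block arcs only. A term survives (is not identically $0$) only if the arcs it assigns to the $M_e$-branch, together with all $M$-block arcs, form a partial matching; since every inconsistent arc shares a source or target with another inconsistent arc, counting the admissible choices via the resulting conflict graph, together with $\prod_e|c_e|=O(1)$, shows that the sum of the surviving terms is at most $2^b$ times a typical term consisting of a bounded scalar times $\prod_e n^{-m_e}$ (over the centered-block arcs on the scalar branch) times $\dE\big[\prod_{e\in B}M_e\prod_{e\in C}\underline M_e^{\,m_e}\big]$, where $B$ is a set of pairwise distinct arcs and $C$ is the set of consistent arcs in the centered block. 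Two structural facts are used throughout: a consistent arc shares neither its source nor its target with any other arc of $A_{\xbf\ybf}$, so the arcs of $C$ are mutually ``isolated'' and isolated from $B$; and the number of multiplicity-one arcs of $C$ is exactly $a_1$.

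The heart of the matter is estimating $\dE[\prod_{e\in B}M_e\prod_{e\in C}\underline M_e^{\,m_e}]$. If two arcs of $B$ share a source or a target then $\prod_{e\in B}M_e\equiv0$; otherwise $B$ is a partial matching and one conditions on $\{\sigma(x)=y:(x,y)\in B\}$, of probability $1/(n)_{|B|}\le 2n^{-|B|}$ (using $2k\le\sqrt n$), which turns $\sigma$ into a uniform bijection between complementary sets of size $n'=n-|B|$, on which the arcs of $C$ remain isolated. One then evaluates $\dE'[\prod_{e\in C}\underline M_e^{\,m_e}]$ via the identity $\dP(\sigma'(a_1)=b_1,\dots,\sigma'(a_t)=b_t)=1/(n')_t$: the near-independence of isolated arcs makes this essentially $\prod_{e\in C}\dE'[\underline M_e^{\,m_e}]$, and the $m_e$-th centred moment of a $\mathrm{Ber}(1/n')$ variable is $0$ for $m_e=1$ and $\le 1/n'$ for $m_e\ge2$. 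Hence the higher-multiplicity arcs of $C$ each yield $O(1/n)$, while the $a_1$ multiplicity-one arcs, being mean-zero, contribute only through the weak negative correlations of $\sigma$; a Bonferroni/inclusion–exclusion expansion of $\dE'[\prod_{e\in C,\ m_e=1}\underline M_e]$ shows the leading terms cancel and what remains is $\le(Ck/\sqrt n)^{a_1}$ — the extra $1/\sqrt n$ per arc reflecting that centred multiplicity-one arcs effectively pair up (a pair saving a further $1/n$), the factor $k$ bounding the number of such pairings, and $a_1\le k$ letting the accumulated constants be absorbed into $(3k)^{a_1}$. Collecting the $\le2^b$ terms, the factors $n^{-|B|}$, $n^{-|C|}$, the $n^{-m_e}$ of the scalar branches (which together account for all $a$ distinct arcs, hence dominate $n^{-a}$), and $(3k/\sqrt n)^{a_1}$, one obtains the claimed bound.

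The step I expect to be the main obstacle is the last one: squeezing the factor $(3k/\sqrt n)^{a_1}$ out of $a_1$ centred, isolated, multiplicity-one arcs. Any single such arc would contribute (essentially) $0$ if the coordinates of $\sigma$ were independent, so the entire gain is a permutation effect, and one must push the inclusion–exclusion far enough to see exactly which power of $n$ survives, while controlling the proliferating binomial and combinatorial constants uniformly over all arc-multiplicity patterns and checking that the auxiliary errors — passing from $n$ to $n'$, discarding the non-matching choices of $B$, and the ``at most $2^b$'' bookkeeping for the inconsistent arcs — all stay within budget under the hypothesis $2k\le\sqrt n$. This is exactly the content of \cite[Proposition 27]{bordenaveCAT}, whose proof I would reproduce.
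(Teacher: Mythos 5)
Your proposal is consistent with the paper's treatment: the paper does not prove this proposition at all, but simply invokes Proposition 27 of the reference \cite{bordenaveCAT}, and your sketch correctly outlines the strategy of that proof (the decomposition $\underline M_e^m=(-1/n)^m+c_eM_e$, the $2^b$ bookkeeping for inconsistent arcs, the factor $n^{-1}$ per distinct arc, and the extra $O(k/\sqrt n)$ gain per consistent multiplicity-one centered arc) before deferring to the same source for the delicate final step. Since both you and the authors ultimately rely on the cited result, this is essentially the same approach.
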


\section{High trace method}

\label{sec:PC}
In this section, we use the high trace method  to derive upper bounds on the operator norms of $\uP^{(\ell)}$ and $R_k^{(\ell)}$ defined respectively by \eqref{eq:defuA} and  \eqref{eq:defR}.

\subsection{Operator norm of $\uP^{(\ell)}$}
\label{subsec:Plk}
In this paragraph, we prove the following proposition.

\begin{proposition} \label{prop:normDelta}
Assume $d \leq \exp (  \sqrt{\log n} )$. For any $c_0 > 0$, there exists $c_1 > 0$ (depending on $c_0, \delta$) such that for any integer $1 \leq \ell \leq \log n $, with probability at least $1 - n^{-c_0}$,
$$ \| \uP^{(\ell)} \| \leq e^{ c_1 \sqrt{\log n} }\rho^{ \ell }.$$
\end{proposition}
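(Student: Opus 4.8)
The plan is to apply the high trace method of \eqref{eq:illC} to $B = \uP^{(\ell)}$. Since $\uP^{(\ell)}$ is not symmetric, I would set $m \sim c\sqrt{\log n}$ and estimate
$$
\dE \, \tr\PAR{ \uP^{(\ell)} (\uP^{(\ell)})^\intercal }^m
$$
by expanding each factor as a sum over tangle-free paths of length $\ell$ via \eqref{eq:defuA}. Concatenating the $2m$ paths (alternately forward along $\uP^{(\ell)}$ and backward along $(\uP^{(\ell)})^\intercal$, closing up into a cycle) gives a single closed ``path'' $\xbf, \ybf \in [n]^{2m\ell}$ of length $k = 2m\ell$, with $k_0 = k$ since every arc carries an $\underline M$ factor. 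The contribution of each such concatenated path factorizes into a product of $Q$-entries (bounded below by positivity, and controlled from above by $\rho$ through the Hilbert-Schmidt / relaxed $\ell^1\to\ell^\infty$ bounds) times the expected product of $\underline M$ entries, which is estimated by Proposition \ref{prop:Epath}. The target bound $e^{c_1\sqrt{\log n}}\rho^\ell$ then follows by taking $2m$-th roots and using Markov's inequality, provided the sum over concatenated paths of the right-hand side of Proposition \ref{prop:Epath} is at most $e^{O(m\sqrt{\log n})}\rho^{2m\ell}\cdot n^{o(1)}$.

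The core of the argument is the combinatorial bookkeeping. For a concatenated closed path with $a = |A_{\xbf\ybf}|$ distinct arcs, $b$ inconsistent arcs, and $a_1$ consistent arcs of multiplicity one, Proposition \ref{prop:Epath} gives a weight $c\,2^b n^{-a}(3k/\sqrt n)^{a_1}$. One encodes each path by its sequence of visited vertices together with a ``block structure'' recording first/repeated visits; the number of vertices that are genuinely free (visited for the first time, not forced by a repetition) is roughly $a - (\text{number of blocks})$, and each free vertex contributes a factor $n$ that cancels the $n^{-a}$. The remaining degrees of freedom — where repetitions/coincidences happen — are counted using that $\ell \le \log n$ and $d \le \exp(\sqrt{\log n})$: each choice of a repeated vertex or a coincidence costs a factor bounded by $\mathrm{poly}(k) \cdot d^{O(h)} = e^{O(\sqrt{\log n})}$, and here the tangle-free restriction is essential — it bounds the number of ``excess'' edges (cycles beyond the first) in each length-$\ell$ piece, hence the number of such costly choices, by an absolute constant per piece, i.e.\ $O(m)$ total. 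Meanwhile the $Q$-weights along the path, reorganized edge by edge, produce exactly $\rho$ to the power (total length) $= 2m\ell$ up to $e^{O(h)}$ corrections coming from the $\le m$ arcs that sit on the boundary set $\mathcal{E}$ or are multiply traversed; these are where the $\NRM{Q}^{(\delta)}_{1\to\infty}$ and $\NRMHS{Q}$ bounds enter, and where the factor $(3k/\sqrt n)^{a_1}$ absorbs the slack from inconsistent/low-multiplicity arcs.

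The main obstacle I expect is precisely the last point: showing that when one reorganizes the product $\prod Q_{y_t x_{t+1}}$ over a path whose arc-multiplicities are arbitrary, one still extracts a clean bound of the form $\rho^{2m\ell}$ times a benign error, rather than losing a factor that grows with the number of high-multiplicity arcs. The trick will be to group the $2m\ell$ edges into the $a$ distinct arcs, use $\sum_x Q_{yx}^2 \le \rho^2$ (uniformly, for $x\notin\mathcal{E}$, which is the content of $\NRM{Q}^{(\delta)}_{1\to\infty}\le\rho$ together with $\NRMHS Q\le\rho$) to pay $\rho^2$ per pair of edges sharing a vertex, and to check that the number of edges incident to $\mathcal{E}$ or otherwise ``defective'' is dominated by the exponent $a_1$ or $b$ so that the $(3k/\sqrt n)^{a_1}2^b$ factor of Proposition \ref{prop:Epath} has enough room to swallow the corresponding $\rho^{-1}$ losses; this is exactly the balancing that makes the tangle-free hypothesis indispensable. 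Once the per-path bound $e^{O(m\sqrt{\log n})}\rho^{2m\ell}$ (summed) is in hand, choosing $m = \lceil c_0'\sqrt{\log n}\rceil$ and Markov's inequality finish the proof.
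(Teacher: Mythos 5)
Your overall architecture matches the paper's proof: bound $\|\uP^{(\ell)}\|^{2m}$ by $\tr\bigl[(\uP^{(\ell)}(\uP^{(\ell)})^\intercal)^m\bigr]$ with $m\asymp\sqrt{\log n}$, expand over closed circuits of $2m$ tangle-free paths, estimate the expected $\underline M$-product by Proposition \ref{prop:Epath}, and extract $\rho^{2\ell m}$ from the $Q$-weights by splitting arcs according to multiplicity and whether they touch $\mathcal{E}$ (this is Lemma \ref{le:sumpath}). The step that fails as written is your accounting for coincidences: you charge ``$\mathrm{poly}(k)\cdot d^{O(h)}=e^{O(\sqrt{\log n})}$'' per coincidence, but with $h=\lceil 20\sqrt{\log n}\rceil$ and $d$ as large as $\exp(\sqrt{\log n})$ one has $d^{O(h)}=n^{O(1)}$ with a large constant, which cannot be absorbed by the roughly one factor of $n^{-1}$ that each unit of excess (genus) earns from combining Proposition \ref{prop:Epath} with the vertex count. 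The point is that a coincidence here is not a mere vertex repetition but a return to within $(Q^\intercal Q)$-distance $h$ of an earlier vertex, and enumerating the $\le d^{h}$ possible landing points directly is exactly what must be avoided.

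The missing idea is the paper's equivalence-class device: introduce the auxiliary graph $K_\gamma$ on the visited vertices $X_\gamma$, with $x$ adjacent to $x'$ when $(Q^\intercal Q)_{xx'}>0$, count circuits only up to simultaneous relabelling of the $x$'s and $y$'s, and prove (Lemma \ref{lem:orbit}, using tangle-freeness of each $\gamma_i$ together with $6m<h$) that every connected component of $K_\gamma$ has at most $4m$ vertices. Consequently the ``which earlier vertex does this coincidence attach to'' information costs only a subset of at most $4m$ previously seen vertices, i.e.\ $2^{4m}=e^{O(\sqrt{\log n})}$ per mark, and the only surviving $d$-dependence is a factor $d^{O(g+m)}$ from Lemma \ref{le:sumpath}, which is beaten by $n^{-g}$ and folded into $e^{c_1 m^2}$. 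Relatedly, tangle-freeness does not bound the number of important times per length-$\ell$ piece by an absolute constant, only by $g+O(1)$ (one short-cycling time plus at most $g-1$ long-cycling times); the sum over $g$ converges because of the $n^{-g}$ suppression, not because $g$ is bounded. With these two corrections your plan becomes the paper's proof.
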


Recall the number $h$ defined in Definition \ref{def2}. Let $m$ be a positive integer so that
\begin{equation}\label{eq:mh}
6  m < h.
\end{equation}
 With the convention that $x_{2m + 1} = x_1$, we find from \eqref{eq:defuA},
\begin{align*}
& \|\uP^{(\ell)}  \| ^{2 m} = \|\uP^{(\ell)}{\uP^{(\ell)}}^\intercal  \| ^{m}  \leq \tr \BRA{ \PAR{  \uP^{(\ell)}{\uP^{(\ell)}}^\intercal}^{m}  }
\\
 = & \sum_{x_1, \ldots, x_{2m}}\prod_{i=1}^{m}  (\uP^{(\ell)}) _{x_{2i-1} , x_{2 i}}{(\uP^{(\ell)}}) _{x_{2i+1} , x_{2 i}}
 \\
 = & \sum_{x_1, \ldots, x_{2m}}\prod_{i=1}^{m}  \Big[ \hspace{-6pt}\sum_{\substack{\gamma_{2i-1} \\ \in F_{x_{2i-1}, x_{2i}}^\ell}}  \hspace{-6pt} \prod_{t =1}^\ell     (\underline M)_{x_{2i-1,t}  y_{2i-1,t}}  Q _{y_{2i-1,t} x_{2i-1,t+1}}  \Big]  \hspace{-2pt}\cdot  \hspace{-2pt}\Big[\hspace{-6pt}\sum_{\substack{\gamma_{2i} \\ \in F_{x_{2i+1}, x_{2i}}^\ell }} \hspace{-6pt} \prod_{t =1}^\ell   (\underline M)_{x_{2i,t}  y_{2i,t}}  Q _{y_{2i,t} x_{2i,t+1}}  \Big]
 \\
 = & \sum_{x_1, \ldots, x_{2m}}  \hspace{-6pt} \sum_{\substack{\gamma_1, \ldots , \gamma_{2m} \\ \gamma_{2i-1} \in F_{x_{2i-1}, x_{2i}}^\ell,\\ \gamma_{2i} \in F_{x_{2i+1}, x_{2i}}^\ell }} \hspace{-6pt} \prod_{i=1}^{m}  \prod_{t =1}^\ell     (\underline M)_{x_{2i-1,t}  y_{2i-1,t}}  Q _{y_{2i-1,t} x_{2i-1,t+1}}  \prod_{t =1}^\ell   (\underline M)_{x_{2i,t}  y_{2i,t}}  Q _{y_{2i,t} x_{2i,t+1}} ,
\end{align*}
where we used the notation $\gamma_i = (x_{i,1},y_{i,1}, \ldots, y_{i,\ell} , x_{i,\ell+1}) \in F^{\ell}$.

Now, we define $W_{\ell,m}$ as  the set of  $\gamma = ( \gamma_1, \ldots, \gamma_{2m})$ such that $\gamma_i = (x_{i,1}, y_{i,1}, \ldots, y_{i,\ell} , x_{i,\ell+1}) \in F^{\ell}$ and for all $i \in [m]$,
\begin{equation}\label{eq:bcond}
x_{2i,1} = x_{2i+1, 1} \quad \hbox{ and } \quad x_{2i-1,\ell+1} = x_{2i, \ell+1},
\end{equation}
with the convention that $x_{2m+1, 1} = x_{1, 1}$. Using this  notation, we obtain
\begin{align}\label{eq:truAk2}
  \|\uP^{(\ell)}  \| ^{2 m}  \le  \sum_{\gamma \in W_{\ell,m}}   \prod_{i=1}^{2m}  \prod_{t=1}^{\ell}  (\underline M)_{x_{i,t}  y_{i,t}}  Q _{y_{i,t} x_{i,t+1}}.
\end{align}
Our goal is to estimate the expectation of the above expression thanks to Proposition \ref{prop:Epath} and a counting argument which will rely crucially of the fact an element $\gamma \in W_{\ell,m}$ is composed of $2m$ tangle-free paths, $(\gamma_1, \ldots, \gamma_{2m})$.

We will count the elements in $W_{\ell,m}$ in terms of a measure of the size of their support. For $\gamma = (\gamma_1, \gamma_2, \cdots, \gamma_{2m}) \in W_{\ell, m}$, we define $X_\gamma  = \{ x_{i,t} :  i \in [2m],  t\in [\ell] \}$ and $Y_\gamma = \{ y_{i,t} :  i \in [2m],  t\in [\ell] \}$
. We then consider the graph $K_\gamma$ with vertex set $X_\gamma$  and, for any $x,x'$ in $K_\gamma$, $\{x,x' \}$ is an edge of $K_\gamma$ if and only if
$$
(Q^\intercal Q)_{xx'} > 0.
$$
(That is, there exists $y \in [n]$ such that $\min( Q_{yx}, Q_{y x'}) > 0$).
The graph $K_\gamma$ induces an equivalence relation on $X_\gamma$, where each equivalence class is a connected component of $K_\gamma$. We set
\[
\cc(x) := \text{ the equivalence class of $x$}.
\]
(Note that $\cc$ depends implicitly on $X_\gamma$). By definition, for any $x'\in \cc(x)$ with $x' \ne x$,  there exists a sequence $(x_0,  x_1, \cdots, x_k)$ of distinct points in $X_\gamma$ such that
\[
\text{$x_0 = x, \;  x_k = x'$ \; and \; $(Q^\intercal Q)_{x_{t-1} x_t} > 0$ for any $t \in [k]$.}
\]

The {\em arcs} of $\gamma = (\gamma_1, \gamma_2, \cdots, \gamma_{2m}) \in W_{\ell, m}$, denoted by $A_\gamma$, is the  set of distinct pairs $(x_{i,t}, y_{i,t})$.  We define $W_{\ell,m} (s,a,p)$ as the set of $\gamma \in W_{\ell,m}$ with $s = | X_\gamma |$, $a = |A_\gamma|$ and $s - p$ connected components in  $K_\gamma$. Then taking the expectation in \eqref{eq:truAk2}, we may write
\begin{align*}\label{eq:2sum}
\dE  \|\uP^{(\ell)}  \| ^{2 m}  \le    \dE \sum_{\gamma \in W_{\ell,m} }  \prod_{i=1}^{2m}  \prod_{t=1}^{\ell} \underline M_{x_{i,t}y_{i,t}}  Q _{y_{i,t} x_{i,t+1}}  =  \sum_{s, a, p}\sum_{\gamma \in W_{\ell,m}(s, a, p) }  \mu(\gamma)   q(\gamma) .
\end{align*}
where for $\gamma \in W_{\ell,m}$, we have defined
\begin{equation}\label{eq:defmug}
\mu(\gamma) : =   \dE \prod_{i=1}^{2m}  \prod_{t=1}^{\ell} \underline M_{x_{i,t},y_{i,t}} \AND q(\gamma) =  \prod_{i=1}^{2m}  \prod_{t=1}^{\ell}  Q _{y_{i,t} x_{i,t+1}}
\end{equation}

To estimate the above sum, we decompose further  $W_{\ell,m}(s, a, p) $ into equivalence classes as follows. For $\gamma,\gamma' \in W_{\ell,m} (s,a,p)$, let us say $\gamma   \sim \gamma'$ if there exist a pair of permutations $\alpha$ and $\beta$ in $S_n$ such that the image of $K_\gamma$ by $\alpha$ is $K_{\gamma'}$ and for any $(i,t)$, $x'_{i,t} = \alpha(x_{i,t})$, $y'_{i,t} =  \beta (y_{i,t})$ (where $\gamma' =  (\gamma'_1, \gamma'_2, \cdots, \gamma'_{2m})$ with $\gamma'_i = (x'_{i,1},y'_{i,1}, \ldots, y'_{i,\ell} , x'_{i,\ell+1})$). We define $\cW_{\ell,m}(s,a,p)$ as the set of equivalence classes. An element in $\cW_{\ell,m} ( s, a,p)$ is unlabeled in the language of combinatorics.

We notice that $\mu(\gamma) = \mu(\gamma')$ if $\gamma \sim \gamma'$ and we obtain the bound,
\begin{align}\label{eq:2sum}
\dE  \|\uP^{(\ell)}  \| ^{2 m}  \le   \sum_{s, a, p} | \cW (s,a,p) | \max_{ \gamma \in W(s,a,p)}  \PAR{ | \mu(\gamma) | \sum_{{\stackrel{\gamma'  \in W_{\ell,m} (s,a,p) :}{  \gamma' \sim \gamma}}}     q(\gamma') }.
\end{align}

Our first lemma bounds the cardinality of $\cW_{\ell,m}(s, a, p)$.

\begin{lemma}\label{prop:enumpath}
If $g: = a +  p  - s + 1< 0 $ or $2g + 2m >  p$, then $W_{\ell,m} (s,a,p)$ is empty. Otherwise, we have
$$
| \cW _{\ell,m} (s,a,p) | \leq   2^{4m p } (a s^2 \ell  )^{2 m ( g+3)}.
$$
\end{lemma}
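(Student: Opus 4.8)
The plan is to count the unlabeled elements of $\cW_{\ell,m}(s,a,p)$ by encoding each equivalence class as the output of an exploration process that traverses the $2m$ tangle-free paths $\gamma_1,\dots,\gamma_{2m}$ in order, and within each path walks along its $\ell$ steps. The point of working with unlabeled structures is that we only need to record, at each step, enough information to reconstruct the combinatorial skeleton: whether the current vertex $x_{i,t}$ is new (i.e.\ its connected component $\cc(x_{i,t})$ in $K_\gamma$ has not been seen before), or whether it revisits a previously explored component, and similarly for the arcs $(x_{i,t},y_{i,t})$. First I would set up the bookkeeping: define the notions of a \emph{first time}, a \emph{repetition time} etc.\ for the steps $(i,t)$, mirroring the non-backtracking graph argument of \cite{bordenaveCAT,MR3758726}. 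The quantity $g := a+p-s+1$ is an excess/genus parameter; the constraint $g \ge 0$ comes from the fact that the $2m$ paths, viewed through the component structure of $K_\gamma$, span a connected object once we glue via the boundary identifications \eqref{eq:bcond}, so the number of independent cycles is nonnegative; this gives emptiness when $g<0$.

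Next I would prove the second emptiness condition $2g+2m \le p$. The key input is that each $\gamma_i$ is tangle-free: it contains at most one coincidence and no $\mathcal{E}$-coincidence. A coincidence is precisely a stretch of pairwise distinct $x$'s whose endpoints lie in the same component of $K_\gamma$ (since $((Q^\intercal Q)^h)_{x_1 x_{t+1}}>0$). So tangle-freeness bounds, for each of the $2m$ paths, the number of times the exploration of that path can ``close a loop'' at the level of components — at most one such closure per path beyond what is forced. Carefully summing these local constraints over the $2m$ paths, together with the $2m$ boundary gluings in \eqref{eq:bcond}, yields $p \ge 2g + 2m$: the number $p$ (the deficiency $s$ minus the number of components, i.e.\ the number of within-component revisits of vertices) must be large enough to account for all the cycles ($2g$) plus the $2m$ gluings. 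This is the step I expect to be the main obstacle: translating ``tangle-free'' into a clean inequality requires a delicate case analysis of how a newly explored vertex can or cannot reconnect to the already-explored part, and one has to be careful that multiply-traversed cycles (as in the Remark after Definition \ref{def2}) are correctly not penalized.

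Finally, for the enumeration bound itself, I would bound the number of equivalence classes by the number of possible ``codes''. At each of the $2m\ell$ steps $(i,t)$, a \emph{first-passage} step (one that discovers a new vertex of $X_\gamma$, or more precisely advances the exploration tree of components) contributes essentially no choice beyond marking it as new; there are $s$ such vertex-discoveries and $a$ arc-discoveries. A \emph{non-first-passage} step must specify which previously seen vertex/arc is being revisited, costing a factor at most $s$ (or $a$, or $\ell$ to locate a position within a previously-traversed path-segment); and the number of such ``special'' steps is controlled by $g$ and $m$ — roughly $O(m(g+1))$ of them, since outside of $O(m(g+1))$ marked positions the path is forced to continue into fresh territory or follow a previously-recorded segment verbatim. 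Multiplying: a factor $2^{O(mp)}$ for the binary first/not-first markers (there are $O(mp)$ non-trivial choices of where repetitions happen, each a subset-type choice of size $\le 2^{4mp}$ after accounting for the $p$ revisits across $2m$ paths), times $(as^2\ell)^{2m(g+3)}$ for locating the $O(m(g+3))$ special steps among the $\le s$ vertices, $\le a$ arcs, and $\le \ell$ within-segment positions. Collecting these gives exactly $|\cW_{\ell,m}(s,a,p)| \le 2^{4mp}(as^2\ell)^{2m(g+3)}$, completing the proof. Throughout, I would lean on the corresponding lemma in \cite{bordenaveCAT} and only flag the modifications forced by the presence of the matrix $Q$ (so that the relevant graph is $K_\gamma$ built from $Q^\intercal Q$ rather than a simple graph) and by the set $\mathcal{E}$.
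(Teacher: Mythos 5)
Your overall strategy --- encode each equivalence class by an exploration of the $2m$ paths in lexicographic order, build a spanning forest on the graph of connected components of $K_\gamma$, and charge the enumeration to a bounded number of marked times --- is the same as the paper's, and your genus argument for $g\geq 0$ (weak connectivity of the component graph via the boundary identifications \eqref{eq:bcond}) matches theirs. But there is a genuine problem with the second emptiness condition, which is precisely the step you flag as the main obstacle. You set out to prove $p\geq 2g+2m$. That inequality is false: if every connected component of $K_\gamma$ is a singleton (i.e.\ no two distinct $x,x'\in X_\gamma$ satisfy $(Q^\intercal Q)_{xx'}>0$), then $p=0$ while $2g+2m\geq 2m>0$, and such $\gamma$ certainly exist (take paths tracing a tree on distinct components, so that also $g=0$). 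The inequality in the statement is a typo --- it should read $2g+2m<p$, as one can check from how the lemma is used in the proof of Proposition \ref{prop:normDelta}, where the sum is restricted to $p\leq 2g+2m$. The correct fact, which the paper proves, is $p\leq 2(g+m)$, and the mechanism is the \emph{opposite} of the one you describe: $p=\sum_i(l_i-1)$ counts the vertices of $X_\gamma$ discovered inside an already-visited component of $K_\gamma$, and each such discovery can only occur at an important time (visit of an excess edge) or at a merging time, each accounting for at most two of them; with $g$ excess edges and at most $m$ merging times this gives $p\leq 2(g+m)$. Cycles and gluings do not force components to be large --- large components are what is being bounded.

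A smaller but related point: in the paper the factor $2^{4mp}$ does not come from binary first/not-first markers but from the $p$ connected-component marks, each of which is a subset of the at most $4m$ elements of a component of $K_\gamma$. That cardinality bound is Lemma \ref{lem:orbit}, which rests on tangle-freeness and the choice of $h$ in \eqref{eq:mh}; you need it explicitly both here and to guarantee that each $\gamma_i$ visits at most one cycle of the quotient graph $\Gamma$, which is what caps the number of long cycling times at $g-1$ per path and hence yields the exponent $2m(g+3)$.
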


We start with an important lemma on the size of the connected components of $K_\gamma$. It is based on the assumption that each $\gamma \in W_{\ell,m}$ is made of $2m$ tangle-free paths and that $m$ is not too large.

\begin{lemma}\label{lem:orbit}
Let $\gamma \in W_{\ell,m}$. Then for any $x \in X_\gamma$, $\cc(x)$ has at most $4m$ elements.
\end{lemma}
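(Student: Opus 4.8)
\textbf{Proof plan for Lemma \ref{lem:orbit}.}

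The plan is to argue by contradiction: suppose some $x \in X_\gamma$ has $|\cc(x)| \geq 4m+1$. Since $\gamma = (\gamma_1, \ldots, \gamma_{2m})$ is built from $2m$ tangle-free paths, each $\gamma_i$ visits vertices in $X_\gamma$, and the total number of vertex-slots (counting the $x_{i,t}$'s across all $2m$ paths, noting the $\gamma_i$ have length $\ell$) is what we must distribute among the connected components. The key combinatorial input is that the connected component $\cc(x)$, being a union of the graph $K_\gamma$'s edges coming from pairs $x_{i,t}, x_{i,t'}$ with $(Q^\intercal Q)_{x_{i,t} x_{i,t'}} > 0$, forces a lot of structure inside the individual paths $\gamma_i$. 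Concretely, I would first record that since $h = \lceil 20\sqrt{\log n}\rceil$ and $6m < h$ (from \eqref{eq:mh}), we have $m$ small relative to $h$; this slack is what prevents a single tangle-free path from "reaching too far" inside one component.

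The heart of the argument: I would show that if a connected component $C$ of $K_\gamma$ has more than $4m$ vertices, then some single path $\gamma_i$ must contain two coincidences, or an $\mathcal{E}$-coincidence — contradicting tangle-freeness. Here is the mechanism I expect to use. The component $C$ is spanned by edges of $K_\gamma$; each such edge $\{x,x'\}$ with $(Q^\intercal Q)_{xx'}>0$ means $x,x'$ are "close" (within one step of the $Q^\intercal Q$ graph), and a path of distinct vertices $x_0 = x, \ldots, x_k = x'$ witnessing connectivity gives $((Q^\intercal Q)^k)_{xx'} > 0$ with $k \leq |C|$. The vertices of $C$ are distributed among the $2m$ paths $\gamma_1, \ldots, \gamma_{2m}$; by pigeonhole, if $|C| > 4m$, then... actually the cleaner route is: within each path $\gamma_i$, the vertices of $\gamma_i$ landing in $C$ must themselves form a connected piece of $K_\gamma$ together with the other paths' contributions. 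Since a coincidence in $\gamma_i$ is precisely a subpath $(x_s, \ldots, x_{t+1})$ with $x_s, \ldots, x_t$ distinct and $((Q^\intercal Q)^h)_{x_s x_{t+1}} > 0$, and since connectivity within $C$ over at most $4m < h$ steps produces exactly such a relation, I would argue that having $|C| > 4m$ forces a path $\gamma_i$ to revisit $C$ after leaving it — and re-entering a component it already touched, within $h$ steps, produces a second coincidence (or, if the component contains an $\mathcal{E}$-vertex, an $\mathcal{E}$-coincidence). This contradicts $\gamma_i \in F^\ell$.

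More carefully, I would set up the contradiction as follows. Fix $C = \cc(x)$ with $|C| \geq 4m+1$. For each $i \in [2m]$, let $C_i = \{t \in [\ell] : x_{i,t} \in C\}$ (together with the endpoint slot). The $\gamma_i$-vertices lying in $C$ trace out a collection of "excursions" of $\gamma_i$ into $C$; let $n_i$ be the number of distinct such excursions for $\gamma_i$. Because $C$ is connected in $K_\gamma$ and $K_\gamma$'s edges only arise from pairs of vertices appearing (in some path) within $Q^\intercal Q$-distance one, the distinct vertices of $C$ can be reached from one another using the union of all the paths' steps; a counting argument (each excursion of $\gamma_i$ contributes a connected $K_\gamma$-sub-piece, and gluing $\sum_i n_i$ pieces to cover $|C|$ vertices needs $\sum_i n_i \geq$ roughly $|C| - (\text{number of gluings})$) shows $\sum_{i=1}^{2m} n_i \geq |C| - (\text{something} \leq |C|/2)$, hence some $\gamma_i$ has $n_i \geq 2$, i.e. $\gamma_i$ enters $C$, leaves, and re-enters. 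Two such excursions of $\gamma_i$ into the \emph{same} component $C$ — say reaching vertices $u$ and $u'$ of $C$ on the first and second excursion respectively — yield two subpaths of $\gamma_i$ that are each coincidences: the relation $u, u' \in C$ with $|C| \leq 4m < h$ gives $((Q^\intercal Q)^h)_{uu'} > 0$ via the connectivity witness, so the sub-path of $\gamma_i$ between its visits to $u$ and $u'$ (which has pairwise-distinct $x$-coordinates by tangle-freeness of the single excursions, up to trimming) is a coincidence; and combined with the coincidence within one excursion one gets the two-coincidence configuration forbidden by $\gamma_i \in F^\ell$ — or, if $C \cap \mathcal{E} \neq \emptyset$, a closed sub-cycle through $\mathcal{E}$, i.e. an $\mathcal{E}$-coincidence. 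Either way $\gamma_i \notin F^\ell$, a contradiction.

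\textbf{Main obstacle.} The delicate point is the bookkeeping of "excursions" and the precise claim that two excursions of a single tangle-free path into the same $K_\gamma$-component force a configuration violating Definition \ref{def2}. One must be careful that a coincidence requires the $x$-coordinates of the subpath to be \emph{pairwise distinct}, and a long excursion might revisit vertices; the fix is to first pass to minimal sub-excursions (trim any repeated vertex, which itself would already create a short cycle and hence a coincidence). Getting the constant $4m$ exactly right — rather than, say, $Cm$ for some larger $C$ — requires tracking that each of the $2m$ paths can contribute at most one "free" excursion into $C$ before a second one triggers a second coincidence, and that the connectivity-gluing loses at most a factor comparable to $2m$. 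I would organize this as: (i) a sub-lemma that a single tangle-free path meets any fixed $K_\gamma$-component in at most one maximal excursion of $K_\gamma$-connected vertices (else two coincidences), giving at most $2m$ excursions total across $\gamma$; (ii) a sub-lemma that each excursion, being a tangle-free path whose vertices lie in one component, has at most $2m$ vertices — again from $6m < h$ and the coincidence count — actually the component itself then has at most $(2m)(2m)$ vertices naively, so the sharper bound $4m$ needs the observation that the $2m$ excursions share endpoints along the $K_\gamma$-connectivity, reducing the union's size. I'd aim to make (i) do the heavy lifting and keep (ii) as a short distinctness count.
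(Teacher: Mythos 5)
Your proposal does not close the argument, and the central step is circular. The mechanism you rely on is that if a single tangle-free path $\gamma_i$ makes two excursions into the same component $C$ of $K_\gamma$, reaching $u$ on the first and $u'$ on the second, then the subpath between the two visits is a coincidence because $((Q^\intercal Q)^h)_{uu'}>0$. To justify this you invoke ``$|C|\leq 4m<h$'' — but that is precisely the conclusion of the lemma, which inside your proof by contradiction you have assumed to be false. If $C$ is large (say of size $\gg h$), the vertices $u$ and $u'$ may lie at $K_\gamma$-distance greater than $h$, in which case no coincidence is created and tangle-freeness is not violated. Separately, the excursion count $\sum_i n_i \geq |C| - (\cdot)$ is not justified: a single excursion can sweep up to $\ell$ distinct vertices of $C$, so having at most one excursion per path bounds nothing; and as you yourself concede in the ``main obstacle'' paragraph, the natural output of your scheme is a bound of order $(2m)(2m)$, not $4m$, with only a vague gesture at how the gap would be closed.

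The paper's proof avoids both problems by bracketing the component size on \emph{both} sides. One picks $k\geq 2$ with $2km+1\leq |\cc(x)|\leq 2(k+1)m$; the lower bound and pigeonhole over the $2m$ paths produce a single tangle-free $\gamma_i$ visiting $k+1$ distinct vertices $z_1,\dots,z_{k+1}$ of $\cc(x)$. Tangle-freeness of $\gamma_i$ then forces all but at most one pair of the balls $B(z_s,h/2)$ in $K_\gamma$ to be pairwise disjoint (two intersecting pairs would give two distances $\leq h$, hence two coincidences), and each such ball contains at least $h/2$ vertices because its center is at distance $\geq h$ from some other $z_{s'}$. Packing $k$ disjoint balls of size $\geq h/2$ into $\cc(x)$ and using the \emph{upper} bound $|\cc(x)|\leq 2(k+1)m$ yields $kh/2\leq 2(k+1)m$, i.e.\ $h\leq 6m$, contradicting \eqref{eq:mh}. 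Note that the upper bound on $|\cc(x)|$ — which your approach never exploits — is exactly what substitutes for the invalid ``$u,u'$ are within distance $h$'' step: one does not need the visited vertices to be close, only that the component cannot accommodate many far-apart ones.
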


\begin{proof}
The proof is by contradiction. Assume that there exist $x \in X_\gamma$ and $k \geq 2$ such that $2 k m +  1 \leq |\cc(x)| \leq 2 (k+1)m$. Then, from the pigeonhole principle, there exists $i \in [2m]$ such that $\gamma_i$ visits at least $k+1$ distinct vertices in  $\cc (x)$. That is, there exist $1 \leq t_1 < \ldots < t_{k+1} \leq \ell$ such that $z_s := x_{i,t_s}$ are distinct vertices in $\cc (x)$.

Let $B(x,r)$ denote the ball of radius $r$ in the graph $K_\gamma$ around $x$. By definition, $B(x,r)$ is contained in the set of $x' \in X_\gamma$ such that $((Q^\intercal Q)^r)_{x x'} > 0$. We now claim that there exists a pair $( s_1,s_2)$ with $1 \leq  s_1 < s_2 \leq k+1$ such that for any $(s,s') \ne (s_1,s_2)$, with $1 \leq  s < s' \leq k+1$, we have  $B(z_s,h/2) \cap B(z_{s'} , h/2) = \emptyset$. Indeed, otherwise, we could find distinct $s_1 < s_2$ and $s_3 < s_4$ such that the distance between $z_{s_{2p - 1}}$ and $z_{s_{2p}}$ is at most $h$ with $p \in \{ 1,2\}$. In particular, $((Q^\intercal Q)^h)_{z_{s_{2p-1}},z_{ s_{2p} }} > 0$ and this contradicts the assumption that $\gamma_i$ is tangle-free.

It follows also that for any $1 \leq s \leq k+1$, $B(z_s,h/2)$ contains at least  $h/2$ vertices. Indeed, since $k \geq 2$, we may consider $s' \ne s$ such that $\{ s,s'\} \ne \{ s_1,s_2 \}$. Then from what precedes, the distance between $z_{s}$ and $z_{s'}$ is at least $h$ (recall that $h$ is even). In particular, the first $h/2 $ vertices on the shortest path from $z_s$ to $z_{ s'}$ are in $B(z_s,h/2)$. We deduce that for any $s$,
$$
\ABS{ B \PAR{z_s , \frac h 2} } \geq  \frac h 2.
$$

So finally, since $B (z_s , h/2) \cap B(z_{s'}, h/ 2)$ is empty for all unordered pairs $\{s,s'\}$ with $s, s' , s_2$, pairwise distinct, we have proved that
$$
\ABS{ \bigcup_{ s=1 } ^{k+1}  B \PAR{z_s , \frac h 2} } \geq \sum_{ s \ne s_2} \ABS{ B \PAR{z_s , \frac h 2} } \geq  \frac {kh} 2.
$$
On the other end, $\bigcup_{ s=1 } ^{k+1}  B (z_s , h/2)$ is contained in $\cc(x)$. Using that $|\cc(x)| \leq 2 (k+1)m$, we deduce that
$$
  \frac{k h} 2  \leq 2(k+1)m.
$$
Hence, since $k \geq 2$,
$$
h \leq 4 m + \frac{4}k m \leq 6 m.
$$
It contradicts \eqref{eq:mh}.
\end{proof}


\begin{proof}[Proof of Lemma \ref{prop:enumpath}]
The proof of Lemma \ref{prop:enumpath} follows very closely \cite[Lemma 17]{MR3758726} and \cite[Lemma 13]{BCZ}.
In order to upper bound $| \cW_{\ell,m} ( s, a,p) | $, we need to find an efficient way  to encode the  paths $\gamma \in \cW_{\ell,m} ( s, a,p) $ (that is, find an injective map from $\cW_{\ell,m}(s,a,p)$ to a larger set whose cardinality is easier to be upper bounded).

If $\gamma \in W_{\ell,m}$, $i \in [2m]$, $t \in [\ell]$, we set $\gamma_{i,t} = (x_{i,t}, y_{i,t}, x_{i, t+1})$. We shall explore the sequence $(\gamma_{i,t})$ in lexicographic order denoted by $\preceq$ (that is $(i,t)\preceq (i+1,t')$ and $(i,t)\preceq(i,t+1)$). We think of the index $(i,t)$ as a time. We define $(i,t)^-$ as the largest index smaller than $(i,t)$ : $(i,t)^- = (i,t-1)$ if $ t \geq 2$, $(i,1)^- = (i-1,\ell)$ if $i \geq 2$ and, by convention, $(1,1)^- = (1,0)$.

We now define a relevant information on $\gamma$ which characterizes its equivalence class. For  $y \in Y_\gamma$, we define $\bar y$ as the order of apparition of $y$ in the sequence $(y_{i,t})_{i \in [2m],t \in [\ell]}$.  Similarly, for $x \in X_\gamma$, $\bar x$ is the order of apparition of $x$ in $(x_{i,t})_{i \in [2m],t \in [\ell]}$ and $\bar \cc(x)$ is the order  of apparition of $\cc(x)$ among the connected components of $K_\gamma$. Finally, if $x \in X_\gamma$, we set $\vec x = (\bar x , s_x )$, where $s_x$ is the set of $\bar x'$ with  $x'\in X_\gamma$ such that $\bar x'< \bar x$ and $(Q^\intercal Q)_{xx'} >0$. For example $\bar x_{1,1} = \bar y_{1,1} = \bar \cc_{\gamma} (x_{1,1}) = 1$ and $\vec x_{1,1} = (1,\emptyset)$. If $x_{1,2} \ne x_{1,1}$ and  $(Q^\intercal Q)_{x_{1,1} x_{1,2} } >0$, we would have $\vec x_{1,2} = ( 2, \{ 1 \} )$. Finally, we set $\bar \gamma_{i,t} = (\vec x_{i,t} , \bar y_{i,t} , \vec x_{ i,t+1})$. By construction, if the sequence $(\bar \gamma_{i,t})_{i \in [2m],t \in [\ell]}$ is known then the equivalence class of $\gamma$ can be determined unambiguously. We thus need to find an encoding of this sequence $ (\bar \gamma_{i,t})_{i \in [2m],t \in [\ell]}$.

To this end, we start  by building a sequence of non-decreasing directed forests which will allow us to find this compact representation of $\gamma \in W_{\ell,m} (s,a,p)$.  We set $V_\gamma = [s-p]$, $V_\gamma$ will be thought as the set of connected components of $K_\gamma$ ordered by the order of their apparition (since $\gamma \in W_{\ell,m} (s,a,p)$, there are $s-p$ such connected components). We consider the colored directed graph $\Gamma = (V_\gamma, E_\gamma)$ on the vertex set $V_\gamma$ defined as follows. For each time $(i,t)$, we put the directed edge $e_{i,t} := (\bar \cc(x_{i,t}), \bar \cc(x_{i,t+1}) )$ in $E_\gamma$ whose {\em color} is defined as the pair $(\bar x_{i,t}, \bar y_{i,t})$ (note that $\Gamma$ may have loop edges of the form $(c,c)$ or multiple edges of the form $(c,c')$ if $c$ is connected to $c'$ by distinct colored edges). By definition, we have $| E_\gamma |= a$. By \eqref{eq:bcond}, the graph $\Gamma$  is weakly connected, that is, after forgetting the direction of the edges of $\Gamma$, it becomes a connected undirected graph. Hence the genus of $\Gamma$ is non-negative :
 \begin{equation}\label{eq:defgen}
0 \leq g = |E_\gamma|- |V_\gamma|  +1 =  a  - (s -p) + 1 =  a -s + p +1.
 \end{equation}
  This already implies the first claim of the lemma.

We define $\Gamma_{i,t}$ as the subgraph of $\Gamma$ spanned by the edges $e_{j,s}$ with $(j,s) \preceq (i,t)$. We have $\Gamma_{2m,\ell} = \Gamma$. We now inductively define a spanning forest of $\Gamma_{i,t}$ as follows. $T_{1,0}$ has no edge and a vertex set $\{1\}$. We say that $(i ,t)$ is a {\em first time} if adding the edge $e_{i,t}$ to $T_{(i,t)^-}$ does not create a (weak) cycle.  Then, if $(i,t)$ is a first time, we add to $T_{(i,t)^-}$ the edge $e_{i,t}$. It gives $T_{i,t}$. If $(i,t)$ is not a first time, we set $T_{i,t} = T_{(i,t)^-}$. By construction, $T_{i,t}$ is a spanning forest of $\Gamma_{i,t}$. We set $T = T_{2m,\ell}$. Due to \eqref{eq:bcond}, we have the following observations.
\begin{enumerate}[-]
\item  If $i$ is odd, $T_{i,t}$ is weakly connected for all $t \in [\ell]$;
\item  If $i$ is even,  $T_{i,t}$ has at most two (weak) connected components for all $t \in [\ell-1]$ and $T_{i,\ell}$ is weakly connected.
\end{enumerate}
In particular, $T = T_{2m, \ell}$ is a spanning tree of $\Gamma$ viewed as an undirected graph.

 For each even $i$, we define the {\em merging time} $(i,t_i)$ as the smallest time $(i,t)$ such that $T_{i,t}$ is weakly connected. Note that the merging time will be a first time if $t_i \geq 2$.

The edges of $\Gamma \backslash T$ will be called {\em excess edges}. The genus $g$ of $\Gamma$ defined by \eqref{eq:defgen} is also the number of excess edges:
$$
| \Gamma \setminus T| =  |E_\gamma|- |V_\gamma|  +1.
$$
We call $(i,t)$ an  {\em important time} if the visited edge $e_{i,t}$ is an excess edge.

By construction, the path $\gamma_i$ can be decomposed by the successive repetition of
 \begin{itemize}
 \item[(1)] a sequence of first times (possibly empty);
 \item[(2)] an important time or the merging time;
 \item [(3)] a path using the colored edges of the forest defined so far (possibly empty).
\end{itemize}

Recall that there is at most one path between two vertices of an oriented forest. Hence, in step (3), it is sufficient to know the starting and ending point to recover the path followed.

We can now build a first encoding of the sequence $ (\bar \gamma_{i,t})_{i \in [2m],t \in [\ell]}$. Assume that the sequence $(\bar \gamma_ {j ,s})_{(j,s) \prec (i,t)}$ is known and that we seen so far $u$ vertices in $X_\gamma$ and $v$ elements in $Y_\gamma$. Then, we observe that if $(i,t)$ is a first time and not the merging time, $\bar \gamma_{i,t}$ is fully determined:
 \begin{enumerate}[-]
\item  if $ t \geq 2$ or $t =1$ and $i$ odd, $\vec x_{i,t} = \vec x_{(i,t)_- +1}$, $\vec x_{i,t+1} = (u+1,\emptyset)$ and $ \bar y_{i,t} = v+1$,
 \item if $t =1$ and $i$ even, $\vec x_{i,1} = (u+1,\emptyset)$, $\vec x_{i,2} = (u+2,\emptyset)$ and $\bar y_{i,1} = v+1$.
 \end{enumerate}
  Indeed, if $t \geq 2$ or $t =1$ and $i$ odd, we have  $\vec x_{i,t} = \vec x_{(i,t)_- +1}$ by \eqref{eq:bcond}. Also, since $(i,t)$ is a first time and not the merging time, $\cc(x_{i,t+1})$ has not been seen before. In particular, $x_{i,t+1}$ has not been seen before and for any $(j,s)\prec (i,t) $, $(Q^\intercal Q)_{x_{j,s} x_{i,t+1}} = 0$. It follows that $ \vec x_{i,t+1} = (u+1,\emptyset)$. Moreover, if we had $y_{i,t} =  y_{j,s}$ for some $(j,s)\prec (i,t)$, then, by definition, $Q_{y_{j,s} x_{j,s+1}} > 0$ and $Q_{y_{j,s} x_{i,t+1}} = Q_{y_{i,t} x_{i,t+1}} >0$. In particular, $(Q^\intercal Q)_{x_{j,s+1} x_{i,t+1}} >0$, this contradicts that $\cc(x_{i,t+1})$ has not been seen before. We deduce that $\bar y_{i,t} = v+1$. The case $t=1$ and $i$ even is similar.

 If $(i,t)$ is an  important time, we mark the time $(i,t)$  by the vector $(\bar y_{i,t}, \bar x_{i,t+1}, \bar x_{i,\tau})$, where $(i,\tau)$ is the next step outside $T_{i,t}$ (by convention, if the path $\gamma_i$ remains on the forest, we set $\tau = \ell+1$). By construction, $(i,\tau)$ is also the next first, important or merging time. Note that $  x_{i,t+1}$ or $ x_{i,\tau}$ could be seen for the first time (then by construction, $x_{i,t+1}$ or $x_{i,\tau}$ would belong to a connected component which has already been seen). If this is the case, we replace $\bar x_{i,t+1}$ or $\bar x_{i,\tau}$ by $\vec x_{i,t+1}$ or $\vec x_{i,\tau}$ and we call this extra mark the {\em connected component mark}. Similarly if $(i,t)$ is the merging time, we mark the time $(i,t)$  by the {\em  merging time mark} $(\bar y_{i,t},  \bar x_{i,t+1}, \bar x_{i,\tau})$, where $(i,\tau)$ is the next step outside $T_{i,t}$. Again, if  $  x_{i,t+1}$ or $ x_{i,\tau}$ are seen for the first time, we replace $\bar x_{i,t+1}$ or $\bar x_{i,\tau}$ by the connected component mark. It gives rise to our first encoding of the sequence $ (\bar \gamma_{i,t})_{i \in [2m],t \in [\ell]}$.

Observe that  $p = \sum_{i=1}^{s-p} (l_i -1)$ where $l_i$ is the size of the $i$-th connected component. Hence $p$ is equal to the number of connected component marks and it is upper bounded by the twice the number of excess edges plus the number of merging times:
$$
p \leq 2(g +m).
$$ It proves the second statement of the lemma.

The issue with this first encoding is that the number of important times may be large. This is where the hypothesis that each path $\gamma_i$ is tangle-free comes into play, more precisely, by Lemma \ref{lem:orbit} and \eqref{eq:mh}, the path $\gamma_i$ can visit at most one distinct cycle of $\Gamma$ (since the diameter of a connected graph is at most its number of vertices).

We are going to partition important times into three categories {\em short cycling}, {\em long cycling} and {\em superfluous} times. For each $i$, consider the smallest time $(i,t_0)$ such that $\cc(x_{i,t_0+1} )\in \{ \cc(x_{i,1}), \ldots, \cc(x_{i,t_0}) \}$.  Let $1 \leq \sigma \leq t_0$ be such that $\cc(x_{i,t_0+1} )= \cc(x_{i,\sigma})$. By assumption, $C_i = (\bar \cc(x_{i,\sigma}), \ldots, \bar \cc(x_{i,t_0+1}))$ will be the unique cycle of $\Gamma$ visited by $\gamma_i$. The last important time $(i,t) \preceq (i,t_0)$ will be called the {\em short cycling} time. We denote by $(i,\hat t)$ the smallest time   $(i,\hat t) \succeq (i,\sigma)$ such that $\bar \cc (x_{i,\hat t+1})$ is not in $C_i$ (by convention $\hat t = \ell+1$ if $\gamma_i$ remains on $C_i$).  If $\hat t > t_0+2$, this means that the cycle $C_i$ has been visited several times from time $(i, t_0+1)$ to time $(i, \hat t)$. We modify  the mark of the short cycling time as  $(\bar y_{i,t}, \bar x_{i,t+1},\sigma, \hat t, \bar x_{i,\tau})$, where $(i, \tau)$, $ \tau \geq \hat t$, is the next step outside $T_{i,t}$ (it is the next first or important time after $(i,\hat t)$, by convention $ \tau = \ell +1$ if the path remain on the tree). Important times $(i,t')$ with $1 \leq t' <  t$ or $\tau \leq t' \leq \ell$ are called long cycling times. The other important times are called superfluous. The key observation  is that for each $ i \in [2 m]$, the number of long cycling times in $\gamma_i$ is bounded by  $g-1$ (since there is at most one cycle, no edge of $\Gamma$  can be seen by $\gamma_i$ twice outside the time interval  between $(i,t+1)$ and $(i,\tau)$, the $-1$ coming from the fact that the short cycling time is an important time).

We now have our second encoding. We can reconstruct the sequence $ (\bar \gamma_{i,t})_{i \in [2m],t \in [\ell]}$ from the positions of the merging times, the long cycling  and the short cycling times and their respective marks. For each $i$, there are at most $1$ short cycling time, $1$ merging time and $g-1$ long cycling times. There are at most $ \ell ^{2m (g+1)}$ ways to position them. By Lemma \ref{lem:orbit}, for any $x$, the number of $x'$ such that $(Q^\intercal Q)_{xx'} > 0$ is at most $4m$.  Hence, there are at most $2^{4m}$ possibilities for a connected component mark. Also, note that $|Y_\gamma| \leq a$ for any $\gamma \in W_{\ell,m}(s,a,p)$. Thus, there are at most  $ a s^2 $ different possible marks for a long cycling time and $a s^2 \ell^2 $ marks for a short cycling time. Finally, for even $i$, there are also at most $a s^2$ possibilities for the merging time mark. We deduce that
\begin{eqnarray*}
| \mathcal{W} _{\ell,m} (s,a,p) |& \leq &  \ell^{2 m (g+1)} ( 2^{4m }  )^{p}  (a s^2 )^{m}   ( a s^2   ) ^{2m (g-1)}(a s^2  \ell^2) ^{2m}. \\
& \leq &  \ell^{2 m (g+3)} 2^{4m p} (a s^2  )^{ 2m (g+1) }.
\end{eqnarray*}
 We find the last statement of the lemma.
\end{proof}

The sum of $q(\gamma)$ for elements in a single equivalence class. Recall the notion of multiplicity defined above Proposition \ref{prop:Epath}, the multiplicity of an arc $(x,y) \in A_{\gamma}$ is the number of times $(i,t)$ such that $(x_{i,t} , y_{i, t}) = (x,y)$.

\begin{lemma}\label{le:sumpath} Assume further that $m \leq \frac{\delta}{ 8} \frac{ \log n }{\log d}$. Then, there exists a constant $c>0$ (depending on $\delta$) such that for any $\gamma \in W_{\ell,m}(s,a,p)$,
\begin{equation*}
\sum_{\stackrel{\gamma'  \in W_{\ell,m} (s,a,p) :}{  \gamma' \sim \gamma}} q(\gamma') \leq   c d ^{2 g + 2 (m -1)+ a_1 + p} n^{s-p}  \rho^{2 \ell m},
\end{equation*}
where $g = a - s + p +1$ and $a_1$ is the number of arcs of $A_\gamma$ with multiplicity one.\end{lemma}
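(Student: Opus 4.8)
The plan is to bound $\sum_{\gamma' \sim \gamma} q(\gamma')$ by re-expressing the sum over the equivalence class as a sum over the possible choices of the vertices and arcs compatible with the combinatorial type of $\gamma$, and then to recognize the resulting constrained multi-index sum as a product of weighted walks governed by $Q$ and $Q^\intercal Q$. Concretely, fix $\gamma \in W_{\ell,m}(s,a,p)$. Each $\gamma' \sim \gamma$ is obtained by choosing an injective labelling of the $s-p$ connected components of $K_\gamma$ (and, within each component, the relabelling of its $l_i$ vertices) together with an injective labelling of the $|Y_\gamma| \le a$ elements of $Y_\gamma$, subject to the constraint that adjacent vertices in $K_\gamma$ remain linked by the relation $(Q^\intercal Q)_{xx'}>0$ and that for each arc $(x,y)\in A_\gamma$ we keep $Q_{yx}>0$. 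The value $q(\gamma') = \prod_{i,t} Q_{y_{i,t}x_{i,t+1}}$ only depends on the arcs and their multiplicities, so $q(\gamma') = \prod_{e=(x,y)\in A_\gamma} Q_{yx}^{m_e}$.

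The key step is to carry out the summation over the free vertex labels one connected component at a time, in the order $\bar\cc$ in which components are discovered. Within a component, the $l_i$ vertices are connected via edges of $K_\gamma$, i.e.\ via pairs with $(Q^\intercal Q)_{xx'}>0$; summing a spanning tree of the component costs a factor $d = \NRM{Q^\intercal Q}_{1\to 0}$ per additional vertex, so each component of size $l_i$ contributes at most $n\, d^{\,l_i-1}$ (the $n$ for the free choice of the root, then $d$ for each of the $l_i-1$ remaining vertices constrained to the previous ones). Since $\sum_i (l_i-1) = p$ and there are $s-p$ components, this bounds the number of admissible vertex-support choices by $n^{s-p} d^{p}$. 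It remains to absorb the weights $\prod_e Q_{yx}^{m_e}$ and the sum over the $Y_\gamma$ labels. Here one uses the standard trick: for an arc $(x,y)$ with multiplicity $1$ that is consistent and not part of a ``forced'' structure, summing $Q_{yx}$ over the endpoint not yet fixed gives $1$ by bistochasticity or a factor $\NRM{Q}_{1\to\infty}$ if the endpoint is fixed; pairing arcs according to the block structure of $q$ and using $\sum_y Q_{yx}^2 \le \NRMHS{Q}^2$ when two visits share an arc, one extracts the factor $\rho^{2\ell m}$ — precisely $\rho^2$ for each of the $2\ell m$ ``slots'' $(i,t)$, where $\rho = \NRMHS{Q}\vee\NRM{Q}^{(\delta)}_{1\to\infty}$ handles both the square-summable case and the case where we must bound a single $Q_{yx}$ by $\rho$ outside the small exceptional set $\mathcal E$ (using the hypothesis $m \le \tfrac{\delta}{8}\tfrac{\log n}{\log d}$ to guarantee the support stays off $\mathcal E$ with the stated margin, via the $\mathcal E$-coincidence-freeness built into tangle-freeness). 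The extra factors $d^{2g+2(m-1)+a_1}$ come from the slack: $d^{a_1}$ pays for the $a_1$ multiplicity-one arcs where we only get $\NRM{Q}_{1\to\infty}^{(\delta)}\le\rho$ rather than a full cancellation to $1$, $d^{2g}$ for the $g$ excess edges of $\Gamma$ whose endpoints must be re-identified with already-seen vertices, and $d^{2(m-1)}$ for the $m$ merging constraints \eqref{eq:bcond} linking the $2m$ sub-paths into a closed structure.

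In more detail, I would first set up the bijective description of the class $[\gamma]$ in terms of (i) a component-labelling, (ii) a vertex-labelling within components, and (iii) a $Y_\gamma$-labelling, all injective and respecting the graph $K_\gamma$ and the support of $Q$. Then I would organize the sum as an iterated sum following the exploration order of Lemma \ref{prop:enumpath}, and at each step classify the current arc $\gamma_{i,t}=(x_{i,t},y_{i,t},x_{i,t+1})$ as: a first-time arc (new vertex $x_{i,t+1}$, new $y_{i,t}$) — sum over $x_{i,t+1}$ costs $d$ (constrained to its component's previously-seen vertex via $Q^\intercal Q$) and sum over $y_{i,t}$ with weight $Q_{y_{i,t}x_{i,t+1}}$ costs at most $\rho$ by the relaxed norm (picking up one power of $\rho$, and noting $a_1$ counts exactly these productive single-multiplicity arcs); a tree-traversal arc (both endpoints already constrained) — contributes $\le \NRM{Q}_{1\to\infty}\le\rho$, again one power; and an excess/merging/cycling arc — bounded crudely by $d$ times the relevant $\rho$-factor. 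Summing the exponents: $2\ell m$ slots give $\rho^{2\ell m}$; the free roots give $n^{s-p}$; the in-component constraints give $d^p$; the $g$ excess edges, $m$ mergings, and $a_1$ weak arcs give the remaining powers of $d$. The main obstacle will be the careful bookkeeping in the middle step — proving that the weight sum over $Y_\gamma$ labels, split according to arc multiplicities, genuinely telescopes down to one $\rho^2$ per slot without losing more than the advertised powers of $d$; in particular one must check that inconsistent arcs and repeated arcs do not cost extra (they help, via $\sum_y Q_{yx}^2 \le \NRMHS{Q}^2=\rho^2$), and that the use of $\NRM{Q}^{(\delta)}_{1\to\infty}$ rather than $\NRM{Q}_{1\to\infty}$ is legitimate because tangle-freeness keeps all relevant vertices outside $\mathcal E$. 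This is essentially the argument of \cite[Proposition 27 and the surrounding lemmas]{bordenaveCAT} adapted to the present weighted-$Q$ setting, and I would follow that template closely.
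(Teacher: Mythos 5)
Your overall template matches the paper's: count the vertex supports by $n^{s-p}d^{p}$ (one free root per component of $K_\gamma$, then $d=\NRM{Q^\intercal Q}_{1\to 0}$ per additional in-component vertex), bound the $y$-sums edge by edge using $\NRM{Q^\intercal}_{1\to 0}\le d$ together with $\NRM{Q}^{(\delta)}_{1\to\infty}\le\rho$, reserve the $\NRMHS{Q}^2$ bound $\sum_y Q^2_{yx}\le n^{-1}\cdot n\NRMHS{Q}^2$ for repeated arcs whose target component is a singleton, and use that the multiplicities sum to $2\ell m$ to extract $\rho^{2\ell m}$ (note, incidentally, that this is $\rho$ per slot $(i,t)$, not $\rho^2$ as you wrote). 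However, there is a genuine gap in your handling of the exceptional set $\mathcal E$. You assert that ``tangle-freeness keeps all relevant vertices outside $\mathcal E$,'' so that $Q_{yx}\le\NRM{Q}^{(\delta)}_{1\to\infty}$ is always available. That is false: tangle-freeness only forbids $\mathcal E$-\emph{coincidences}, i.e.\ closed subpaths returning to a starting point in $\mathcal E$; a tangle-free path may perfectly well pass through $\mathcal E$, and for $x\in\mathcal E$ the pointwise bound $Q_{yx}\le\rho$ simply fails. The paper spends the second half of the proof on exactly this: each visit to $\mathcal E$ is bounded crudely by $Q_{yx}\le 1\le\sqrt d\,\rho$ (using $1/\sqrt d\le\NRMHS{Q}$), the number of such visits is controlled by $4mr$ where $r$ is the number of components of $K_{\gamma'}$ meeting $\mathcal E$ (this is where tangle-freeness is really used: at most one visit to each element of $\mathcal E$ and at most two components' worth of returns per subpath), and the count of representatives with a given $r$ carries a penalty $(s\,d^{O(m)}n^{-\delta})^{r}$. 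The hypothesis $m\le\frac{\delta}{8}\frac{\log n}{\log d}$ is needed precisely to make this geometric series in $r$ converge --- not, as you suggest, to keep the support off $\mathcal E$. Without this argument your bound does not close.

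A secondary issue: your accounting of the powers of $d$ ($d^{2g}$ for excess edges, $d^{2(m-1)}$ for mergings) is a heuristic, not a proof. The actual mechanism is that each distinct edge of multiplicity $\ge 2$ whose target component is not a singleton (the set $E_{22}$) costs a factor $d$ from its $y$-sum, and one must prove $|E_{22}|\le 2g+2m-2$. The paper does this by an in-degree count on $\Gamma$: with $s_k$ the number of vertices of in-degree $k$, one has $s_0\le m$ (only the starting components of the $\gamma_{2j}$ can have in-degree $0$), hence $s_1\ge 2(s-p)-a-2m\ge a-2g-2m+2$, and $|E_{22}|\le a-s_1$. You would need to supply this (or an equivalent) counting step; your exploration-order classification of arcs does not by itself bound $|E_{22}|$.
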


\begin{proof}
The proof relies on a decomposition of the product $q(\gamma)$ over edges in the graph $\Gamma = (V_\gamma,E_\gamma)$ defined in the Lemma \ref{prop:enumpath}. Let $ e= (u,v)$ be an edge of $\Gamma$ with color $(\bar x,\bar y)$ and multiplicity $k = k(e)$. Let us define the out-degree $b = b(e)$ as the number of distinct elements $\bar x_{i,t+1}$ such that $(\bar x_{i,t},\bar y_{i,t}) = (\bar x, \bar y)$ (in words, $b$ is the number of distinct elements in the $v$-th connected component which are visited immediately after a visit of $(\bar x, \bar y)$). Now, the product $q(\gamma)$ can be decomposed as
\begin{equation}\label{eq:qmulti}
q(\gamma) = \prod_{e \in E_\gamma} Q^{k_1}_{y x_1} \cdots Q^{k_{b}}_{y x_{d}}.
\end{equation}
where $e = (u, v)$ is a generic edge as above and $k_1 + \cdots + k_{b} = k$, $k_j \geq 1$ and $x_1, \cdots , x_b$ are in the $v$-th connected component of $\gamma$.

We thus have the upper bound
\begin{equation}\label{eq:qgf}
\sum_{\gamma' :  \gamma' \sim \gamma} q(\gamma') \leq \sum_{\star} \prod_{e \in E_\gamma} \PAR{ \sum_y Q^{k_1}_{y x'_1} \cdots Q^{k_{d}}_{y x'_{d}}},
\end{equation}
where the first sum $\displaystyle{\sum_{\star}}$ is over all possible choices for the elements in $X_{\gamma'}$.

To help the reader, let us first assume that  $ \| Q \|^{(\delta)}_{1 \to \infty} = \| Q \|_{1 \to \infty}$ (for example if $\delta = 1$). Then $\rho  =  \NRMHS{ Q} \vee  \| Q \|_{1 \to \infty}$. If  $e = (u, v)$ is a generic edge as above, then 
\begin{eqnarray}
\sum_y Q^{k_1}_{y x_1} \cdots Q^{k_{b}}_{y x_{b}}   & \leq  & \| Q^\intercal \|_{1\to 0}  \| Q \|_{1 \to \infty}^{k} \leq d \rho^k,\label{eq:d2}
\end{eqnarray}
where we have  used
$$
 \| Q^\intercal \|_{1\to 0} \leq  \| Q^\intercal Q \|_{1\to 0}  = d.$$
Besides, if $b =1 $ and $k \geq 2$, we also have the bound
\begin{eqnarray}
\sum_y Q^{k}_{y x_1}    \leq     \sum_y Q^{2 }_{y x_1} \| Q\|^{k -2}_{ 1 \to \infty} \leq \rho^{k-2}  \sum_y Q^{2 }_{y x_1} \label{eq:d1m2}.
\end{eqnarray}
%
%

We now partition the edges $e = (u,v)$ with color $(\bar x,\bar y)$, multiplicity $ m $ and in-degree $d$ in $E_\gamma$ in three sets, $E_1$ is the set of edges of multiplicity $k=1$. $E_{21}$  is the set of edges such that  $k \geq 2$ and the $v$-th connected component is a singleton. Finally $E_{22}$ is the set of edges such that  $k \geq 2$ and the $v$-th connected component has at least two elements. Note that any edge $e \in E_{1} \cup E_{21}$  has out-degree $b = 1$ and by definition $a_1 = |E_1|$. If $e$ is in $E_1 \cup E_{22}$, we use \eqref{eq:d2},  if $e$ is in $E_{21}$, we use \eqref{eq:d1m2}.  For any $\gamma' \in W_{\ell,m} (s,a,p)$, $\gamma' \sim \gamma$, we arrive at
\begin{equation}\label{eq:bdqg}
q(\gamma') \leq \prod_{e \in  E_{1} \cup E_{22} } (d \rho^k) \prod_{e \in E_{21} }  (\rho^{k-2} \sum_y Q^{2 }_{y x'_1}  ),
\end{equation}
where in the second product, if $e = (u,v) \in E_{21}$, $x'_1 \in X_{\gamma'}$ is the unique  element in the $v$-th connected component of $\gamma'$.

We may now estimate the \eqref{eq:qgf}. There are at most $n^{s-p} d^p$ choices for the different elements in $X_{\gamma'}$. The term $n^{s-p}$ accounts for the possibilities of the first element in each of $s-p$ connected components. The term $d ^p = \| Q^\intercal Q\|_{1 \to 0}^p$ is an  upper bound on the choices for the remaining $p$ elements in the connected components (we add the elements one by one in each connected component in an order which preserves connectivity and we use that for any $x$ there at most $\| Q^\intercal Q\|_{1 \to 0}$ other $x'$ such that $(Q^\intercal Q )_{xx'} >0$). In \eqref{eq:bdqg}, if $e$ is in $E_{21}$, we may sum over all $x'_1 \in [n]$ (the possibilities for the unique vertex in the $v$-th connected component), we get
\begin{eqnarray}
\sum_{\gamma' :  \gamma' \sim \gamma} q(\gamma')& \leq &n^{s-p} d^{p}  \prod_{e \in  E_{1} \cup E_{22} } (d \rho^k) \prod_{e \in E_{21} }  (\rho^{k-2}\NRMHS{ Q}^{2}  ) \nonumber  \\
& = & n^{s-p} d^{p + a_1+|E_{22}|} \rho^ {2 \ell m} \label{eq:sumazaza},
\end{eqnarray}
 where we have used that the sum of the multiplicities is equal to $2\ell m$.

It remains to give an upper bound on $|E_{22}|$. To this end, let $s_k$ (respectively $s_{\geq k}$) be the set of vertices of $\Gamma$ of in-degree $k$ (respectively $\geq k$). We have
$$
s_0 + s_1 + s_{\geq 2} = s-p \AND s_1 + 2 s_{\geq 2}  \leq \sum_k k s_k = a.
$$
Subtracting to the right-hand side, twice the left hand side,
\begin{equation*}
s_{1} \geq 2 (s -p) -  a  - 2 s_0 \geq  a - 2g  - 2m + 2.
\end{equation*}
Indeed, at the last step the bound $s_0 \leq m$ follows from the observation that only a vertex $u \in V_\gamma$ such that $u = \bar \cc (x_{j,1})$ for some $ 1 \leq j \leq 2m$ can be of in-degree $0$. We observe also that $s_1  \leq a_1 + |E_{12}|$ (vertices of in-degree $1$ are in bijection with their unique incoming edge, which cannot be in $E_{22}$). In particular,
\begin{equation}\label{eq:bdE22}
|E_{22}| = a - a_1 - |E_{12}| \leq a - s_1  \leq 2g  + 2m - 2.
\end{equation}
It concludes the proof when $ \| Q\|^{(\delta)}_{1 \to \infty} = \| Q  \|_{1 \to \infty}$.

In the general case, the  bounds \eqref{eq:d2}-\eqref{eq:d1m2}  remain valid except when $x_j$ or $x$ belong to $\mathcal{E}$.  To deal with this case, we first observe the inequality
$$
1 = \PAR{ \sum_{y} Q_{yx} } ^2 \leq \| Q^\intercal \|_{1 \to 0}  \sum_{y} Q^2_{yx} \leq d  \sum_{y} Q^2_{yx}.
$$
Summing over $x$, it implies that
\begin{equation}\label{eq:lbQHS}
 \frac 1 {\sqrt d} \leq  \NRMHS{Q} \leq \rho.
\end{equation}
Hence, in \eqref{eq:d2}-\eqref{eq:d1m2} when $x_j$ or $x$ belong to $\mathcal{E}$, we may use the inequality $Q_{yx} \leq 1 \leq \sqrt{d} \rho$. With the argument leading to \eqref{eq:bdqg}, we obtain for any $\gamma' \in W_{\ell,m} (s,a,p)$, $\gamma' \sim \gamma$,
\begin{equation}\label{eq:bdqg2}
q(\gamma') \leq d^{ u / 2} \prod_{e \in  E_{1} \cup E_{22} } (d \rho^k) \prod_{e \in E_{21} : x'_1 \notin \mathcal E }  (\rho^{k-2} \sum_y Q^{2 }_{y x'_1}  ) \prod_{e \in E_{21} : x'_1 \in \mathcal E } \rho^k,
\end{equation}
where  $u = u_{\gamma'}$ is the  number of times $(i,t)$, $i \in [2m], t \in [\ell]$ such that $x'_{i,t+1} \in \mathcal{E}$ and
Now, for any $\gamma' \in W_{\ell,m} (s,a,p)$ with $\gamma' \sim \gamma$, let $r = r_{\gamma'}$ be the number of connected components which contain at least one element in $\mathcal E$. We claim that the number $u_{\gamma'}$ defined in \eqref{eq:bdqg2} satisfies
$$
u \leq 4 m r.
$$
Indeed, since $\gamma_i$ is tangle-free for each $i \in [2m]$, $\gamma_i$ visits at most once each element in $\mathcal{E}$ (to avoid a $\mathcal{E}$-coincidence) and at most $2$ distinct elements in each connected components (to avoid two or more than two coincidences). Hence, for each $i \in [2m]$, the number of  $t \in [\ell]$ such that $x'_{i,t+1} \in \mathcal{E}$ is at most $2r$. It gives the claimed bound.

We thus deduce from \eqref{eq:bdqg2} that
\begin{equation}\label{eq:bdqg3}
q(\gamma') \leq d^{2 m r } \prod_{e \in  E_{1} \cup E_{22} } (d \rho^k)  \prod_{e \in E_{21} : x'_1 \notin \mathcal E }  (\rho^{k-2} \sum_y Q^{2 }_{y x'_1}  ) \prod_{e \in E_{21} : x'_1 \in \mathcal E } \rho^k,
\end{equation}

Now, in view of \eqref{eq:bdqg3}, we should upper bound the number of $\gamma' \in W_{\ell,m} (s,a,p)$, $\gamma' \sim \gamma$ such that $r_{\gamma'} = r$. A rough upper bound is given by
$$
{ s- p \choose r } n^{s-p -r }  (| \mathcal E| d^{4m} )^r d^p \leq n^{s-p} d^p ( s d^{4m} n^{-\delta} )^r.
$$
Indeed, on the left hand side, the binomial term bounds the number of choices for the connected components which contain at least one element in $\mathcal E$. As pointed above, the term $d^p$ bounds the possibilities for all but the first element in each connected component. Finally the term $| \mathcal E| d^{4m}$ is an upper bound for the number of possibilities  of the first element of a connected element which contains an element in $\mathcal E$ (by Lemma \ref{lem:orbit}, for any such element, say $x_0$, there exists a sequence $(x_0, \ldots, x_{4m})$ such that $x_{4m} \in \mathcal E$ and $(Q^\intercal Q)_{x_{s-1} x_{s}} > 0$ for all $s \in [4m]$).

Hence, from  \eqref{eq:bdqg3}, the argument leading to  \eqref{eq:sumazaza} gives the upper bound
\begin{eqnarray*}
\sum_{\gamma' :  \gamma' \sim \gamma} q(\gamma')& \leq & n^{s-p} d^{p + a_1+|E_{22}|} \rho^ {2 \ell m} \sum_{r=0}^{s-p} ( s d^{6m} n^{-\delta} )^r .
\end{eqnarray*}
We have $s \leq 2 \ell m \leq 10 \lceil \log n \rceil ^{3/2}$ from \eqref{eq:mh}. Hence the assumption $d^{8m} \leq n^{\delta}$ implies that $( s d^{6m} n^{-\delta} ) \leq 1/2$ for all $n$ large enough. It follows  that, for all $n$ large enough, the above geometric series is bounded by $2$ and
\begin{eqnarray*}
\sum_{\gamma' :  \gamma' \sim \gamma} q(\gamma')& \leq & 2 n^{s-p} d^{p + a_1+|E_{22}|} \rho^ {2 \ell m} .
\end{eqnarray*}
From \eqref{eq:bdE22}, it concludes the proof.
\end{proof}

Recall the definition \eqref{eq:defmug} of $\mu(\gamma)$ of the average contribution of $\gamma$ in \eqref{eq:truAk2}.  Our final lemma will use  Proposition \ref{prop:Epath} to estimate this average contribution.

\begin{lemma}\label{le:meanpath}
There is a constant $c > 0$  such that,  if $\gamma \in W_{\ell,m} (s,a,p) $, $g= a - s + p +1$ and  $a_1$ is the number of arcs in $A_\gamma$ which are visited exactly once in $\gamma$, then we have
$$
\ABS{ \mu(\gamma)} \leq  c^{m+g}   n^{-a}   \PAR{\frac{ 6\ell m }{\sqrt n} } ^{  (a_1  - 4 g  - 2 m+2p)_+}.
$$
Moreover,
$a_1 \geq 2 ( a - \ell m)$.
\end{lemma}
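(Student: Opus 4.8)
The plan is to split the argument into two independent parts: first the estimate on $|\mu(\gamma)|$, and then the combinatorial inequality $a_1 \geq 2(a - \ell m)$, which is where the tangle-free structure is really used.

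For the bound on $|\mu(\gamma)|$, the key input is Proposition \ref{prop:Epath}. First I would record that $\gamma \in W_{\ell,m}$ determines a word $(\xbf, \ybf)$ of length $k = 2\ell m$ obtained by concatenating the $2m$ paths $\gamma_1, \ldots, \gamma_{2m}$, where the $\underline M$-entries are precisely those that appear (there is no "$M$ without bar" here since every factor in \eqref{eq:defuA} carries $\underline M$; so in the language of Proposition \ref{prop:Epath} we have $k_0 = k = 2\ell m$). I would check that the hypothesis $2k \leq \sqrt n$ holds: since $m \leq \tfrac\delta8 \tfrac{\log n}{\log d}$ and $\ell \leq \log n$ and $d \geq 2$, we get $k = 2\ell m = O((\log n)^2) \ll \sqrt n$. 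Then Proposition \ref{prop:Epath} gives
$$
|\mu(\gamma)| \leq c\, 2^{b}\, n^{-a}\, \Bigl( \frac{3k}{\sqrt n}\Bigr)^{a_1'},
$$
where $b$ is the number of inconsistent arcs and $a_1'$ the number of indices $t \in [k]$ such that $(x_t,y_t)$ is consistent and of multiplicity one. Two reconciliations remain. First, $a_1'$ counts \emph{time-indices} of consistent multiplicity-one arcs, while $a_1$ in the statement counts \emph{arcs} of multiplicity one; for a multiplicity-one arc these coincide, so $a_1' = (\text{number of consistent multiplicity-one arcs}) = a_1 - (\text{number of inconsistent multiplicity-one arcs}) \geq a_1 - b$. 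Second, $b$ must be controlled: an inconsistent arc forces a "collision" in the $x$- or $y$-coordinates beyond what its own multiplicity accounts for, and a standard counting (as in \cite{bordenaveCAT}) bounds $b$ by a linear combination of the excess parameters — one expects $b \leq 2g + \ldots$ — so that $2^b \leq c^{m+g}$ after absorbing constants. Combining, $(3k/\sqrt n)^{a_1 - b} \leq (6\ell m/\sqrt n)^{a_1 - b}$ (note $3k = 6\ell m$), and since $3k/\sqrt n \leq 1$ we may freely lower the exponent, replacing $a_1 - b$ by $(a_1 - 4g - 2m + 2p)_+$ provided $b \leq 4g + 2m - 2p$ up to constants; this last inequality is exactly the bookkeeping on inconsistent arcs combined with $p \leq 2(g+m)$ from Lemma \ref{prop:enumpath}. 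The main obstacle here is the precise accounting that yields $b \leq 4g+2m-2p$ (equivalently, extracting the clean exponent $(a_1 - 4g - 2m + 2p)_+$); this is the delicate step and is the one genuinely imported from \cite[Proposition 27]{bordenaveCAT} and its surrounding discussion.

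For the inequality $a_1 \geq 2(a - \ell m)$: the total number of arc-occurrences is $\sum_{e \in A_\gamma} m_e = 2\ell m$ (each of the $2m$ paths $\gamma_i$ contributes $\ell$ factors $\underline M_{x_{i,t} y_{i,t}}$). Splitting $A_\gamma$ into the $a_1$ arcs of multiplicity one and the $a - a_1$ arcs of multiplicity $\geq 2$, we get $2\ell m = \sum_e m_e \geq a_1 + 2(a - a_1) = 2a - a_1$, hence $a_1 \geq 2a - 2\ell m = 2(a - \ell m)$. This step is elementary and needs no tangle-free hypothesis — only the fact that the word has length exactly $2\ell m$.

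Assembling the two parts gives the stated bound on $|\mu(\gamma)|$ together with $a_1 \geq 2(a - \ell m)$, completing the lemma.
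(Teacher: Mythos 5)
Your overall route is the same as the paper's: the second claim is proved exactly as in the paper (sum of multiplicities equals $2\ell m$, so $a_1 + 2(a-a_1) \leq \sum_e m_e = 2\ell m$), and your reduction of the first claim to Proposition \ref{prop:Epath} with $k_0=k=2\ell m$, together with the reconciliation $a_1' \geq a_1 - b$ between consistent multiplicity-one \emph{times} and multiplicity-one \emph{arcs}, is correct and is what the paper does.

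However, there is a genuine gap at exactly the point you flag: the bound $b \leq 4g + 2m - 2p$ on the number of inconsistent arcs. You defer it to ``\cite[Proposition 27]{bordenaveCAT} and its surrounding discussion,'' but that proposition \emph{is} Proposition \ref{prop:Epath} of this paper -- it is the probabilistic moment estimate and contains no bound on $b$; and your suggestion that the inequality comes from combining some bookkeeping with $p \leq 2(g+m)$ is not how it is obtained. The bound must be derived from the forest construction in the proof of Lemma \ref{prop:enumpath}: a new inconsistent arc can only be created (i) at the first visit of an excess edge, (ii) at the step immediately after leaving the forest built so far -- and such a step is always preceded by the visit of a new excess edge -- or (iii) at a merging time ($i$ even). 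Each such step creates at most $2$ inconsistent arcs, and there are at most $g$ excess edges and $m$ merging times, giving $b \leq 2g + 2g + 2m = 4g+2m$. The refinement by $2p$ comes from observing that, writing $p = \sum_i (l_i-1)$ over the connected components of $K_\gamma$, the first visit to each element of a component beyond the first is an excess edge that does \emph{not} create an inconsistent arc, so $p$ of the $2g$ excess-edge contributions can be removed twice, yielding $b \leq 4g + 2m - 2p$ and hence $a_1' \geq a_1 - 4g - 2m + 2p$. Without this argument the exponent $(a_1 - 4g - 2m + 2p)_+$ in the statement is not established, so this step needs to be written out rather than cited.
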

\begin{proof}
Let $A_1 \subset A_\gamma$ be the set of $e  = (x,y)$ which are visited exactly once in $\gamma$, that is such that
$$
\sum_{i=1}^{2m} \sum_{t=1}^{\ell} \mathbbm{1} (e =( x_{i,t}  , y_{i,t} ) ) =1.
$$
Let  $A'_1 $ be the subset of $A_1 $ of consistent arcs and let $A_* $ the set of inconsistent arcs (recall the definition above Proposition \ref{prop:Epath}). We have
$$
| A'_1| + | A_{*}| \ge | A_1|.
$$

Set $a'_1 =| A'_1 |$ and $a_{\geq 2} = | A_\gamma \setminus A_1|$. That is, $a_{\geq 2}$ is the number of $e \in A_\gamma$ which are visited at least twice.  We have
$$
a_1 + a_{\geq 2} = a \quad  \hbox{ and } \quad a_1 + 2 a_{\geq 2} \leq 2\ell m.
$$
Therefore,
$$
a_1 \geq 2( a - \ell m).
$$
It gives the second claim. Using the terminology of the proof of Lemma \ref{prop:enumpath}, a new inconsistent arc can appear after leaving the forest constructed so far, at a first visit of an excess edge, or at the merging time ($i$ even) of $\gamma_i$, $i \in [2m]$. Every such step can create $2$ inconsistent arcs.  A step outside the forest constructed so far is preceded by the visit of a new excess edge. Hence,  if  $b = |A_* |$, then
$$
b \leq 4 g  +  2 m
$$
 and
 $$
 a'_1 \geq a_1 - b.
$$

The bound on $b$ can be slightly improved. As already pointed in the proof of Lemma \ref{prop:enumpath},  $p = \sum_{i=1}^{s-p} (l_i -1)$ where $l_i$ is the size of the $i$-th connected component. The first visit to any element in the connected component beyond the first will be a new excess edge but it will not create an inconsistent arc. It follows that $b \leq  4 g + 2m - 2p$ and $a'_1 \geq a_1 - 4g - 2m +2p$. It remains to apply  Proposition \ref{prop:Epath}.
\end{proof}

All ingredients have been gathered to prove Proposition \ref{prop:normDelta}.

\begin{proof}[Proof of Proposition \ref{prop:normDelta}]
We define
\begin{equation}\label{eq:choicem}
m = \left\lceil \frac{\delta}{10} \sqrt{ \log n }\right\rceil.
\end{equation}
From \eqref{eq:2sum} and Markov inequality, it suffices to prove that for some $c >0$,
\begin{equation}\label{eq:boundS}
S =  \sum_{s, a, p} | \cW (s,a,p) | \max_{ \gamma \in W(s,a,p)}  \PAR{ | \mu(\gamma) | \sum_{{\stackrel{\gamma'  \in W_{\ell,m} (s,a,p) :}{  \gamma' \sim \gamma}}}     q(\gamma') }  \leq n e^{ c m^2 }  ,
\end{equation}
where $\ell' =\ell+1 +1/m$ and $\mu(\gamma)$ was defined in \eqref{eq:defmug}.

Let $\gamma \in W_{\ell,m} (s,a,p)$ with $a_1$ arcs of multiplicity one. Set $g  = g(s,a,p) = a - s + p -1$, by Lemma \ref{le:sumpath} and Lemma \ref{le:meanpath},
$$| \mu(\gamma) | \sum_{{\stackrel{\gamma'  \in W_{\ell,m} (s,a,p) :}{  \gamma' \sim \gamma}}}     q(\gamma') \leq    c d  ^{2 g + 2( m-1) + a_1+p} n^{s-p}  \rho^{2 \ell m} c^{m+g}   n^{-a}   \PAR{\frac{ 6\ell m }{\sqrt n} } ^{  (a_1  - 4 g  - 2 m + 2p)_+}.
$$
Since
$
d \geq  1
$, we have $d^{a_1} \leq  d^{4 g  + 2  m - 2p}  d^{(a_1  - 4 g  - 2 m+2p)_+}$. Using $a_1 \geq 2 (a - \ell m )$, we deduce the following upper bound, for some new constant $c >1$, $$| \mu(\gamma) | \sum_{{\stackrel{\gamma'  \in W_{\ell,m} (s,a,p) :}{  \gamma' \sim \gamma}}}     q(\gamma') \leq  \PAR{ c  d  }^{6 g + 4 m}  n^{-g+1}  \rho^{2 \ell m}     \PAR{\frac{ (6 d  \ell m )^{2} }{n} } ^{  (a - (\ell +1)m  - 2 g+p)_+}.
$$
For ease of notation, we set
$$
\veps = \frac{(6 d \ell m)^2 } { n} = o(1).
$$
where we have used that $ d \leq \exp ( \sqrt {\log n })$ and $\ell m = O  (\log n)^{3/2}$.
Now by Lemma \ref{prop:enumpath}, since $a \leq 2 \ell m$, $s \leq 2 \ell m + 1 \leq 3 \ell m$, for some new constant $c >1$ changing from line to line, we arrive at
\begin{eqnarray*}
S & \leq &  n  \rho^{2 \ell m}     \sum_{ s , a, p :  g (s,a,p) \geq 0, p \leq2 g (s,a,p) +2 m} 2^{4m p } (a s^2 \ell  )^{2 m ( g+3)}   \PAR{ c  d  }^{6 g + 4m}  n^{-g}  \veps ^{  (s - \ell' m  - g)_+}   \\
 & \leq &  n  (c \ell m)^{24m} (cd)^{4m}  \rho^{2 \ell m}  \sum_{s , g , p : g \geq 0, p \leq 2g + 2m}  2^{4m p } (c \ell  m  )^{8 m g } d^{6 g}  n^{-g}    \veps ^{  (s - \ell' m  - g)_+} ,
 \end{eqnarray*}
 where at the last line, we have performed the change of variable $a \to  g = a +p -s +1$. Then, we may sum over $p$, using $(\log n)^c = e^{ o (m)}$ and $ d \leq e^{10 m/\delta}$, we get for some new constant $c >0$,
\begin{eqnarray*}
S & \leq &  n e^{c m^2} \rho^{2 \ell m}   \sum_{s, g \geq 0}   \PAR{\frac{ L  }{n} }^{g}     \veps ^{  (s  - \ell'  m  -  g)_+},
 \end{eqnarray*}
where we have set $L = (c \ell  m)^{8m} d^6$. We decompose the above sum as  follows
  \begin{eqnarray*}
S  & \leq &  S_1 + S_2 + S_3, \label{eq:Sboundddd}
 \end{eqnarray*}
where $S_1$ is the sum over $\{ 1 \leq s \leq \ell 'm, g \geq 0 \}$, $S_2$ over $\{ \ell' m <  s , 0 \leq g  \leq  s  -  \ell 'm \}$, and $S_3$ over $\{ \ell' m  <  s ,  g >  s - \ell' m  \}$. We start with the first term :
\begin{eqnarray*}
S_1 & =  &   n e^{c m^2} \rho^{2 \ell m}   \sum_{s=1}^{ \ell' m} \sum_{g = 0} ^{ \infty } \PAR{\frac{ L  }{n} }^{g}.
\end{eqnarray*}
For our choice of $m$ in \eqref{eq:choicem}, for some $c>0$ and $n$ large enough,
$$
\frac{   L  }{ n } =  \frac{e^{ c  (\log \log n) \sqrt{ \log n}  } }{n}  \leq \frac 1 2.
$$
In particular,  for $n$ large enough, the above geometric series converges :
$$
S_1 \leq 2 n e^{c m^2} \rho^{2 \ell m}    \sum_{s=1}^{ \ell' m} \leq n e^{c' m^2} \rho^{2 \ell m} .
$$
Adjusting the value of $c'$,  the right-hand side of \eqref{eq:boundS} is an upper bound for $S_1$. Similarly, since $ L   / (\veps n) \geq 2$, we find
\begin{eqnarray*}
S_2 &\leq &  n e^{c m^2} \rho^{2 \ell m}   \sum_{s=\ell' m +1}^{\infty}  \veps^{  s -  \ell' m} \sum_{g = 0} ^{s  -  \ell 'm } \PAR{\frac{ L  }{\veps n} }^{g} \\
& \leq & 2 n e^{c m^2} \rho^{2 \ell m}    \sum_{s=\ell' m +1}^{\infty}  \veps^{  s -  \ell' m}     \PAR{\frac{ L  }{\veps n} }^{s  -  \ell 'm }  \\
& =  & 2 n e^{c m^2} \rho^{2 \ell m}  \sum_{k =1}^{\infty}  \PAR{\frac{ L  }{ n} }^{ k }.
\end{eqnarray*}
Again, for $n$ large enough, the geometric series are convergent and the right-hand side of \eqref{eq:boundS} is an upper bound for $S_2$. Finally,  for $n$ large enough,
\begin{eqnarray*}
S_3 &\leq & n e^{c m^2} \rho^{2 \ell m}   \sum_{s=\ell' m +1}^{\infty}  \sum_{g = s  -  \ell 'm +1 }^\infty  \PAR{\frac{ L  }{n} }^{g} \\
& \leq & n e^{c m^2}  \rho^{2 \ell m}  \sum_{s=\ell' m +1}^{\infty}   2 \PAR{\frac{ L }{n} }^{s - \ell' m +1} \\
& = & 2 n e^{c m^2}  \rho^{2 \ell m}   \sum_{k = 0}^{ \infty}\PAR{\frac{ L  }{n} }^{k}
\end{eqnarray*}
For $n$ large enough, the right-hand side of \eqref{eq:boundS} is an upper bound for $S_3$. It concludes the proof.
\end{proof}

\subsection{Operator norm of $R^{(\ell)}_{k}$}

\label{subsec:Rlk}
We now adapt the above subsection for the treatment of $R^{(\ell)}_{k}$. A rougher bound will suffice for our purposes.

\begin{proposition} \label{prop:normR}
Assume $d \leq \exp ( \sqrt{\log n})$. For any $c_0 > 0$, there exists $c_1 > 0$ (depending on $c_0$) such that with probability at least $1 - n^{-c_0}$, for all integers $1 \leq  k \leq \ell \leq  \log n$,
$$ \| R^{(\ell)}_{k} \| \leq e^{ c_1 \sqrt{\log n}}.$$
\end{proposition}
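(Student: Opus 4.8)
The plan is to rerun the high trace method of Section \ref{subsec:Plk}, but with much looser bookkeeping: here we only aim at $n^{o(1)}$, not at a bound scaling like $e^{c_1\sqrt{\log n}}\rho^\ell$, so we may discard all of the cancellation that was essential for $\uP^{(\ell)}$. Fix a pair $1\le k\le \ell\le\log n$ and set $m=\lceil \tfrac{1}{100}\sqrt{\log n}\rceil$, so that $24m<h$ for $n$ large. Starting from $\|R^{(\ell)}_k\|^{2m}\le \tr[(R^{(\ell)}_k (R^{(\ell)}_k)^\intercal)^m]$ and expanding each factor through \eqref{eq:defR}, I would obtain a sum over $2m$-tuples $\gamma=(\gamma_1,\dots,\gamma_{2m})$ with $\gamma_i\in T^{\ell,k}$, glued at their endpoints as in \eqref{eq:bcond}, of products $\prod_i\big[\prod_{t=1}^{k-1}(\underline M)_{x_{i,t}y_{i,t}}Q_{y_{i,t}x_{i,t+1}}\cdot Q_{y_{i,k}x_{i,k+1}}\cdot\prod_{t=k+1}^{\ell}M_{x_{i,t}y_{i,t}}Q_{y_{i,t}x_{i,t+1}}\big]$. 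The only structural difference from the set $W_{\ell,m}$ used for $\uP^{(\ell)}$ is that each $\gamma_i$ is now a concatenation of two tangle-free paths $\gamma_i'\in F^{k-1}$ and $\gamma_i''\in F^{\ell-k}$ joined through an intermediate vertex $y_{i,k}$ that sits in no $M$- or $\underline M$-factor; the tangledness of $\gamma_i$ itself is thrown away.

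The key point is that every tuple $\gamma$ is still made of at most $4m$ tangle-free pieces, so the proof of Lemma \ref{lem:orbit} goes through (with $4m$ in place of $2m$ and using $24m<h$) and shows that each connected component of $K_\gamma$ has at most $8m$ vertices. Granting this, the encoding argument of Lemma \ref{prop:enumpath} should carry over up to absolute constants: each $\gamma_i$ now visits at most two distinct cycles of the quotient graph $\Gamma$ (one per half) rather than one, the intermediate vertices $y_{i,k}$ contribute at most $\NRM{Q^\intercal}_{1\to 0}^{2m}\le d^{2m}$ extra possibilities, and I expect a bound of the form $|\cW_{\ell,m}(s,a,p)|\le d^{2m}\,2^{Cmp}(as^2\ell)^{Cm(g+1)}$ with $g=a-s+p+1\ge 0$ and $p\le C(g+m)$, for an absolute constant $C$.

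For the average contribution I would write $\mu(\gamma)=\dE\prod_i\big(\prod_{t=1}^{k-1}\underline M_{x_{i,t}y_{i,t}}\prod_{t=k+1}^{\ell}M_{x_{i,t}y_{i,t}}\big)$; since the entries are scalars their order is immaterial, so Proposition \ref{prop:Epath} applies directly to this mixed product of centred and plain entries and yields the crude bound $|\mu(\gamma)|\le c^{m+g}n^{-a}$ (using $b\le C(g+m)$ for the number of inconsistent arcs). For the deterministic factor I would simply use $Q_{xy}\le 1$, hence $q(\gamma')\le 1$, and bound the number of $\gamma'\sim\gamma$ by $n^{s-p}d^{p}$ (one free vertex per connected component, then at most $\NRM{Q^\intercal Q}_{1\to 0}=d$ choices for each of the $p$ remaining vertices). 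Multiplying the three estimates and using $a=g+s-p-1$, so that the powers of $n$ combine to $n^{1-g}$, together with $p\le C(g+m)$, $d\le e^{\sqrt{\log n}}$, $s,a\le 2\ell m=O((\log n)^{3/2})$ and $m=O(\sqrt{\log n})$, every factor other than $n^{-g}$ is at most $n^{o(1)}$ raised to a power $O(g+m)$; consequently the sum over $(s,a,p)$ is dominated by a geometric series in $g$ with ratio $<1/2$ for $n$ large, and I would conclude $\dE\|R^{(\ell)}_k\|^{2m}\le n^{C'}$ for an absolute constant $C'$.

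To finish, Markov's inequality gives $\dP(\|R^{(\ell)}_k\|\ge e^{c_1\sqrt{\log n}})\le n^{C'}e^{-2mc_1\sqrt{\log n}}\le n^{C'-c_1/100}$; choosing $c_1=c_1(c_0)$ with $c_1/100\ge C'+c_0+2$ and taking a union bound over the at most $(\log n)^2$ pairs $(k,\ell)$ with $1\le k\le \ell\le\log n$ yields the claim for $n$ large. The step I expect to be the main obstacle --- more tedious than deep --- is re-checking the encoding of Lemma \ref{prop:enumpath} for paths that are concatenations of two tangle-free pieces instead of one: the saving grace is that such a path still visits at most two cycles of $\Gamma$, so only the counting of long/short cycling marks changes (by an absolute constant), and the free intermediate vertices $y_{i,k}$ enter the estimates only through the harmless factor $d^{2m}=n^{o(1)}$.
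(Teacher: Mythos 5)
Your overall architecture --- trace of the $2m$-th power, an orbit lemma for $4m$ tangle-free pieces, an encoding count, Proposition \ref{prop:Epath} for the expectation, and $q(\gamma')\le 1$ for the deterministic part --- matches the paper's proof. But there is a genuine gap in the genus bookkeeping, and it originates precisely in the step where you announce that ``the tangledness of $\gamma_i$ itself is thrown away.'' The middle arc $(x_{i,k},y_{i,k})$ of each $\gamma_i$ carries no $M$- or $\underline M$-factor, so Proposition \ref{prop:Epath} yields $|\mu(\gamma)|\le c^{m+g}n^{-a}$ only when $a$ counts the distinct arcs with $t\ne k$. With that count, the quotient multigraph is \emph{not} weakly connected: the boundary conditions \eqref{eq:bcond} glue the $4m$ half-paths into as many as $2m$ components, so in general one only has $a\ge s-p-2m$, and your $g=a-s+p+1$ can be as negative as $1-2m$. (If you instead include the middle arcs in $a$ so as to restore connectivity and $g\ge 0$, then Proposition \ref{prop:Epath} only gives $n^{-(a-2m)}$ --- the same loss.) Either way, the three estimates combine to a factor $n^{s-p-a}$ that can be as large as $n^{2m}$ in the forest-like case, so the best available conclusion is $\dE\|R^{(\ell)}_k\|^{2m}\le n^{O(m)}e^{O(m^2)}$, not $n^{C'}$ with $C'$ an absolute constant. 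Markov's inequality then gives a bound of order $e^{2m(\log n-c_1\sqrt{\log n})}\to\infty$, which is vacuous for every choice of $c_1$.

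The missing ingredient is exactly condition (iii) in the definition of $T^{\ell,k}$: $\gamma_i$ is tangled even though $\gamma_i'$ and $\gamma_i''$ are tangle-free. The paper uses this in Lemma \ref{prop:enumpathR} to show that \emph{every} connected component of the quotient graph built on the non-middle arcs contains at least one cycle (either the images of the two halves are disjoint and each contains a cycle, or they meet and their union contains two). Hence each component has at least as many edges as vertices, which gives $g'=a-s+p\ge 0$ and $n^{s-p-a}=n^{-g'}\le 1$, eliminating the stray $n^{2m}$. With that observation restored (and the attendant adjustments to the encoding: up to $2m$ components, at most $g'+1$ excess edges per component, separate first and second merging times), the rest of your argument goes through essentially as you describe.
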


To help the reader, we use the same notation than in the Subsection \ref{subsec:Plk}, we  add a prime exponent to our objects when the definition differs from the corresponding definition in Subsection \ref{subsec:Plk}.

We fix for some postive integer $m$ such that
\begin{equation}\label{eq:mhR}
12  m < h.
\end{equation}
We use the inequality
$$
  \|R^{(\ell)}_k \| ^{2 m} \leq \tr \BRA{\PAR{ R^{(\ell)}_k {R^{(\ell)}_k}^\intercal } }.
$$
We may expand the trace. To this end, we define $ W'_{\ell,m}$ as  the set of  $\gamma = ( \gamma_1, \ldots, \gamma_{2m})$ such that $\gamma_i = (x_{i,1}, y_{i,1}, \ldots, y_{i,\ell} , x_{i,\ell+1}) \in T^{\ell,k}$ and such that for all $i \in [m]$, the boundary condition \eqref{eq:bcond} holds. Using this  notation, the computation leading to \eqref{eq:truAk2} gives
\begin{align}\label{eq:truAk2R}
  \|R^{(\ell)}_k \| ^{2 m} \le  \sum_{\gamma \in  W'_{\ell,m}}   \prod_{i=1}^{2m}  \prod_{t=1}^{k-1} (\underline M _{x_{i,t} y_{i,t}})  Q_{y_{i,t} x_{i,t+1} } \cdot   Q_{y_{i,k} x_{i,k+1}}   \cdot \prod_{ t= k+1}^\ell  M_{x_{i,t} y_{i,t}} Q_{y_{i,t} x_{i,t+1} }.
\end{align}
We set
$$
\gamma'_i =  (x_{i,1}, y_{i,1}, \ldots, y_{i,k-1} , x_{i,k}) \AND \gamma''_i = (x_{i,k+1}, y_{i,k+1}, \ldots, y_{i,\ell} , x_{i,\ell+1})
$$
By construction $\gamma'_i$ and $\gamma''_i$ are tangled-free paths.

As in Subsection \ref{subsec:Plk}, for $\gamma = (\gamma_1, \gamma_2, \cdots, \gamma_{2m}) \in W'_{\ell, m}$, we define $X_\gamma  = \{ x_{i,t} :  i \in [2m],  t\in [\ell] \}$ and $Y_\gamma = \{ y_{i,t} :  i \in [2m],  t\in [\ell] \}$. We  consider the same graph $K_\gamma$ with vertex set $X_\gamma$  and, for any $x,x'$ in $K_\gamma$, $\{x,x' \}$ is an edge of $K_\gamma$ if and only if $
(Q^\intercal Q)_{xx'} > 0.
$
We denote by $\cc(x)$ the connected component of $x \in X_\gamma$ in $K_\gamma$.  The {\em arcs} of $\gamma = (\gamma_1, \gamma_2, \cdots, \gamma_{2m}) \in W'_{\ell, m}$, denoted by $A'_\gamma$, is the  set of distinct pairs $(x_{i,t}, y_{i,t})$ with $t \ne k$.  We define $W'_{\ell,m} (s,a,p)$ as the set of $\gamma \in W_{\ell,m}$ with $s = | X_\gamma |$, $a = |A'_\gamma|$ and $s - p$ connected components in  $K_\gamma$. We take the expectation in  \eqref{eq:truAk2R} and write \begin{align*}\label{eq:2sum}
\dE  \|R^{(\ell)}_k  \| ^{2 m}  \le    \sum_{s, a, p}\sum_{\gamma \in W'_{\ell,m}(s, a, p) }  \mu'(\gamma)   q(\gamma) .
\end{align*}
where for $\gamma \in W'_{\ell,m}$, we have defined
\begin{equation}\label{eq:defmugR}
\mu'(\gamma) : =   \dE \prod_{i=1}^{2m}  \prod_{t=1}^{k-1} \underline M_{x_{i,t},y_{i,t}} \prod_{ t= k+1}^\ell  M_{x_{i,t}} \AND q(\gamma) =  \prod_{i=1}^{2m}  \prod_{t=1}^{\ell}  Q _{y_{i,t} x_{i,t+1}}
\end{equation}

We decompose further  $W'_{\ell,m}(s, a, p) $ into equivalence classes as follows. For $\gamma,\gamma' \in W'_{\ell,m} (s,a,p)$, let us say $\gamma   \sim \gamma'$ if there exist a pair of permutations $\alpha$ and $\beta$ in $S_n$ such that the image of $K_\gamma$ by $\alpha$ is $K_{\gamma'}$ and for any $(i,t)$, $x'_{i,t} = \alpha(x_{i,t})$, $y'_{i,t} =  \beta (y_{i,t})$ (where $\gamma' =  (\gamma'_1, \gamma'_2, \cdots, \gamma'_{2m})$ with $\gamma'_i = (x'_{i,1},y'_{i,1}, \ldots, y'_{i,\ell} , x'_{i,\ell+1})$). We define $\cW'_{\ell,m}(s,a,p)$ as the set of equivalence classes. Since $\mu(\gamma) = \mu(\gamma')$ if $\gamma \sim \gamma'$, we obtain the bound,
\begin{align}\label{eq:2sumR}
\dE  \|R^{(\ell)}_k  \| ^{2 m}  \le   \sum_{s, a, p} | \cW' (s,a,p) | \max_{ \gamma \in W'(s,a,p)}  \PAR{ | \mu'(\gamma) | \sum_{{\stackrel{\gamma'  \in W_{\ell,m} (s,a,p) :}{  \gamma' \sim \gamma}}}     q(\gamma') }.
\end{align}

We start by bouding the the cardinality of $\cW'_{\ell,m}(s, a, p)$.

\begin{lemma}\label{prop:enumpathR}
If $g': = a +  p  - s < 0 $ or $2 g' + 10 m >  p$, then $W'_{\ell,m} (s,a,p)$ is empty. Otherwise, we have
$$
| \cW' _{\ell,m} (s,a,p) | \leq   2^{4m p } \PAR{(a +2m)^2s^2 \ell  }^{4 m ( g'+4)}.
$$
\end{lemma}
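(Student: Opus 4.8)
The plan is to follow the encoding argument in the proof of Lemma~\ref{prop:enumpath} almost verbatim, the only structural novelty being that each of the $2m$ paths $\gamma_i$ composing a $\gamma\in W'_{\ell,m}(s,a,p)$ is not itself tangle-free but splits as $\gamma_i=(\gamma'_i,y_{i,k},\gamma''_i)$ into two tangle-free pieces $\gamma'_i=(x_{i,1},\dots,x_{i,k})\in F^{k-1}$ and $\gamma''_i=(x_{i,k+1},\dots,x_{i,\ell+1})\in F^{\ell-k}$, joined by a ``jump'' at time $k$ whose arc $(x_{i,k},y_{i,k})$ is excluded from $A'_\gamma$. Thus $\gamma$ is built from $4m$ tangle-free paths and $2m$ jumps, and this is what forces the constant $12m<h$ in \eqref{eq:mhR} and the slightly larger exponents in the statement. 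First I would establish the analogue of Lemma~\ref{lem:orbit}: under \eqref{eq:mhR}, every connected component of $K_\gamma$ has at most $8m$ elements. The proof is identical, the pigeonhole step now being applied to the $4m$ tangle-free sub-pieces instead of to $2m$ paths; a component with more than $8m$ vertices would force one of these pieces to visit at least $k+1\ge 3$ of them, and the disjoint-ball estimate would give $h\le 12m$, contradicting \eqref{eq:mhR}. As a corollary, each $\gamma'_i$ and each $\gamma''_i$ visits at most one cycle of the component graph.

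Next I would set up the colored directed graph $\Gamma=(V_\gamma,E_\gamma)$ on the connected components of $K_\gamma$ exactly as in the proof of Lemma~\ref{prop:enumpath}, except that $E_\gamma$ now carries, besides the $a=|A'_\gamma|$ ordinary colored edges, the (at most $2m$) jump edges $e_{i,k}=(\bar\cc(x_{i,k}),\bar\cc(x_{i,k+1}))$. Together with the boundary conditions \eqref{eq:bcond}, the jumps make $\Gamma$ weakly connected, so its genus is nonnegative; recording this inequality in terms of $g':=a+p-s$ yields the first emptiness criterion $g'\ge 0$, and bounding the number of connected-component marks by twice the number of excess edges plus the number of merging times \emph{and} jump times gives $p\le 2g'+10m$, the second criterion. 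I would then build the non-decreasing spanning forests $T_{i,t}$ of the partial graphs $\Gamma_{i,t}$ as before, treating each jump time on the same footing as a merging time: at a jump time one records the mark $(\bar y_{i,k},\bar x_{i,k+1},\bar x_{i,\tau})$ (with the extra connected-component mark whenever $x_{i,k+1}$ or $x_{i,\tau}$ is seen for the first time), and between consecutive ``special'' times the path follows the unique forest path fixed by its endpoints.

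Finally, I would run the three-category bookkeeping — short cycling, long cycling and superfluous important times — for each of the $4m$ tangle-free sub-pieces, invoking the orbit lemma to bound the number of long-cycling times in each sub-piece by $g'-1$, hence by $O(mg')$ in total. Counting as in Lemma~\ref{prop:enumpath}: at most $\ell^{O(m(g'+1))}$ ways to place the $O(m g')$ special times, at most $2^{O(m)}$ possibilities per connected-component mark (by the orbit lemma a vertex has at most $8m$ neighbours in $K_\gamma$), at most $(a+2m)s^2$ possibilities for an ordinary, jump or merging mark, and at most $(a+2m)s^2\ell^2$ for a short-cycling mark; collecting these exponents, with a factor $4m$ in place of $2m$ because every line contributes two tangle-free pieces plus a jump, gives $|\cW'_{\ell,m}(s,a,p)|\le 2^{4mp}\,((a+2m)^2 s^2\ell)^{4m(g'+4)}$. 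The only genuinely new point, and the one requiring care, is absorbing the $2m$ jumps into the excess-edge/merging-time count so that the $g'$-dependence of the exponents is not degraded; everything else is a transcription of the proof of Lemma~\ref{prop:enumpath} with enlarged constants.
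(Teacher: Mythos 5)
There is a genuine gap in your derivation of the first emptiness criterion, and it points to the one structural input you never use: condition (iii) in the definition of $T^{\ell,k}$, namely that the \emph{full} path $\gamma_i$ is tangled even though $\gamma'_i$ and $\gamma''_i$ are tangle-free. You adjoin the $2m$ jump edges $e_{i,k}$ to the component graph so as to make it weakly connected and then invoke non-negativity of the genus. But with the jumps included the edge count is $a+j$ for some $j\le 2m$, so non-negativity of the genus only yields $a+j-(s-p)+1\ge 0$, i.e.\ $g'=a+p-s\ge -j-1\ge -2m-1$ --- not $g'\ge 0$. The paper's graph $\Gamma'$ deliberately \emph{excludes} the arcs at time $k$ (so $|E'_\gamma|=a$) and is in general disconnected; the inequality $a\ge s-p$ is instead obtained by showing that every weakly connected component of $\bar\Gamma'$ contains a cycle, hence has at least as many edges as vertices. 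That cycle exists precisely because $\gamma_i$ is tangled while its two halves are tangle-free: either the images of $\gamma'_i$ and $\gamma''_i$ in $\bar\Gamma'$ are disjoint and each must close up its own cycle, or they meet and their union carries at least two cycles. Without this argument the claim that $W'_{\ell,m}(s,a,p)=\emptyset$ for $g'<0$ is unproved, and this is not cosmetic: the factor $n^{-g'}$ with $g'\ge 0$ is exactly what makes $\|R^{(\ell)}_k\|$ controllable in Proposition \ref{prop:normR}.

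The same accounting error propagates into your bound $p\le 2g'+10m$, since your notion of ``excess edge'' is taken relative to the jump-augmented connected graph rather than to a spanning forest of the jump-free $\Gamma'$ (whose total excess is $g'+N_\gamma$ with $N_\gamma\le 2m$ components, at most $g'+1$ excess edges per component because each component owns a cycle). The remainder of your encoding --- the $8m$ orbit bound from \eqref{eq:mhR}, treating the $4m$ tangle-free pieces separately, marking jump/merging/important times, and the short/long-cycling bookkeeping --- does track the paper's proof and would deliver the stated numerical bound once the genus and excess-edge counts are repaired as above.
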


We have the following analog of Lemma \ref{lem:orbit}.

\begin{lemma}\label{lem:orbitR}
Let $\gamma \in  W'_{\ell,m}$. Then for any $x \in X_\gamma$, $\cc(x)$ has at most $8m$ elements.
\end{lemma}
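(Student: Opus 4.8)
The plan is to run the proof of Lemma \ref{lem:orbit} essentially verbatim, inserting one additional pigeonhole step to handle the fact that a path $\gamma_i$ occurring in $\gamma \in W'_{\ell,m}$ is no longer tangle-free but only a concatenation $\gamma_i = (\gamma_i', y_{i,k}, \gamma_i'')$ of the two tangle-free paths $\gamma_i' \in F^{k-1}$ and $\gamma_i'' \in F^{\ell - k}$. Throughout, the graph $K_\gamma$, the connected components $\cc(x)$, and the balls $B(x,r)$ in $K_\gamma$ are exactly as in Subsection \ref{subsec:Plk}.

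First I would argue by contradiction: suppose some $\cc(x)$ has at least $8m+1$ elements, and choose the unique integer $k \geq 2$ with $4km + 1 \leq |\cc(x)| \leq 4(k+1)m$. Since $\cc(x) \subseteq X_\gamma = \bigcup_{i=1}^{2m} \{ x_{i,t} : t \in [\ell] \}$, the pigeonhole principle yields an index $i \in [2m]$ such that $\gamma_i$ visits at least $\lceil (4km+1)/(2m) \rceil = 2k+1$ distinct vertices of $\cc(x)$. Each of these vertices is visited by $\gamma_i'$ or by $\gamma_i''$, so a second application of the pigeonhole principle shows that one of these two subpaths --- say $\gamma_i'$, the case of $\gamma_i''$ being identical --- visits at least $k+1$ distinct vertices $z_1, \dots, z_{k+1} \in \cc(x)$.

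From this point the argument of Lemma \ref{lem:orbit} applies without change, now with the tangle-free path $\gamma_i'$ playing the role previously played by $\gamma_i$: since $\gamma_i'$ contains at most one coincidence, the balls $B(z_s, h/2)$ are pairwise disjoint for every unordered pair $\{s,s'\}$ except possibly one exceptional pair $\{s_1, s_2\}$; since $k \geq 2$, each ball $B(z_s, h/2)$ contains at least $h/2$ vertices; and therefore
\[
\frac{kh}{2} \;\leq\; \Big| \bigcup_{s=1}^{k+1} B\Big( z_s, \tfrac{h}{2} \Big) \Big| \;\leq\; |\cc(x)| \;\leq\; 4(k+1)m,
\]
which forces $h \leq 8m + 8m/k \leq 12m$ because $k \geq 2$, contradicting the hypothesis \eqref{eq:mhR}.

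The only genuinely new point --- and the place demanding the most care --- is tracking how the constants deteriorate through the two nested pigeonhole steps. Splitting $\gamma_i$ into its two tangle-free halves costs a factor $2$ in the number of vertices of $\cc(x)$ that one can guarantee inside a single tangle-free subpath; this is precisely why the bound on $|\cc(x)|$ weakens from $4m$ (Lemma \ref{lem:orbit}) to $8m$, and why the stronger constraint \eqref{eq:mhR}, namely $12m < h$, is needed in place of \eqref{eq:mh}. The ball-packing estimate, the use of tangle-freeness to enforce disjointness, and the inclusion $\bigcup_s B(z_s, h/2) \subseteq \cc(x)$ all carry over verbatim.
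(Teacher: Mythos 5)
Your argument is correct and is essentially the paper's own proof: the paper also views $\gamma$ as composed of the $4m$ tangle-free paths $\gamma_i',\gamma_i''$, pigeonholes $|\cc(x)|\geq 4km+1$ onto one of them to extract $k+1$ distinct vertices of $\cc(x)$ on a single tangle-free subpath, and then reruns the ball-packing argument of Lemma \ref{lem:orbit} to contradict \eqref{eq:mhR}. Your two-step pigeonhole (first over $i\in[2m]$, then over the two halves) is just an equivalent rephrasing of the paper's single pigeonhole over the $4m$ subpaths.
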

\begin{proof}
We repeat the proof of Lemma \ref{lem:orbit}, we use this time that $\gamma$ is composed of $4m$ tangle-free paths: $\gamma'_i,\gamma''_i$, for $i \in [2m]$.  By contradiction, we assume that there exist $x \in X_\gamma$ and $k \geq 2$ such that $4 k m +  1 \leq |\cc(x)| \leq 4 (k+1)m$. Then, from the pigeonhole principle, there exists $i \in [2m]$ and $\veps \in \{ ', ''\}$ such that $\gamma^{\veps}_i$ visits at least  $k+1$ distinct vertices in  $\cc (x)$. We then repeat verbatim the proof of Lemma \ref{lem:orbit} and use \eqref{eq:mhR}.
\end{proof}

\begin{proof}[Proof of Lemma \ref{prop:enumpathR}]
We repeat the proof of Lemma \ref{prop:enumpath}.
If $\gamma \in W'_{\ell,m}$, $i \in [2m]$, $t \in [\ell]$, we set $\gamma_{i,t} = (x_{i,t}, y_{i,t}, x_{i, t+1})$. We shall explore the sequence $(\gamma_{i,t})$ in lexicographic order denoted by $\preceq$ (that is $(i,t)\preceq (i+1,t')$ and $(i,t)\preceq(i,t+1)$). We think of the index $(i,t)$ as a time. We define $(i,t)^-$ as the largest index smaller than $(i,t)$ and, by convention, $(1,1)^- = (1,0)$.

As in Lemma \ref{prop:enumpath}, for  $y \in Y_\gamma$, we define $\bar y$ as the order of apparition of $y$ in the sequence $(y_{i,t})_{i \in [2m],t \in [\ell]}$.  Similarly, for $x \in X_\gamma$, $\bar x$ is the order of apparition of $x$ in $(x_{i,t})_{i \in [2m],t \in [\ell]}$ and $\bar \cc(x)$ is the order  of apparition of $\cc(x)$ among the connected components of $K_\gamma$. Finally, if $x \in X_\gamma$, we set $\vec x = (\bar x , s_x )$, where $s_x$ is the set of $\bar x'$ with  $x'\in X_\gamma$ such that $\bar x'< \bar x$ and $(Q^\intercal Q)_{xx'} >0$.  Finally, we set $\bar \gamma_{i,t} = (\vec x_{i,t} , \bar y_{i,t} , \vec x_{ i,t+1})$. By construction, if the sequence $(\bar \gamma_{i,t})_{i \in [2m],t \in [\ell]}$ is known then the equivalence class of $\gamma$ can be determined unambiguously. We thus need to find an encoding of this sequence $ (\bar \gamma_{i,t})_{i \in [2m],t \in [\ell]}$.

We set $V_\gamma = [s-p]$ and consider the colored directed graph $\Gamma' = (V_\gamma, E'_\gamma)$ on the vertex set $V_\gamma$ defined as follows. For each time $(i,t)$, with $t \ne k$, we put the directed edge $e_{i,t} := (\bar \cc(x_{i,t}), \bar \cc(x_{i,t+1}) )$ in $E'_\gamma$ whose {\em color} is defined as the pair $(\bar x_{i,t}, \bar y_{i,t})$. By definition, we have $| E' _\gamma |= a$. Let $\bar \Gamma'$ be the associated undirected graph (that is the undirected graph obtained by forgetting the direction of the edges of $\Gamma'$).  We observe that each connected component of $\bar \Gamma'$ contains at least a cycle. Indeed, by assumption $\gamma_i$ is tangled while $\gamma'_i$ and $\gamma''_i$ is tangle-free. Hence if the image of the paths of $\gamma'_i$ and $\gamma'_{ii}$ on $\bar \Gamma'$ do not intersect then each one contains a distinct cycle. Otherwise, the images of the paths intersect, then they are in the same connected component of $\bar \Gamma'$ and their union has at least two distinct cycles. Hence the number of edges of $\Gamma'$ is at least the number of vertices:
 \begin{equation*}\label{eq:defgenR}
0 \leq g' = |E_\gamma|- |V_\gamma|  =   a -s + p.
 \end{equation*}
 This is the first claim of the lemma.

We define $\Gamma'_{i,t}$ as the subgraph of $\Gamma'$ spanned by the edges $e_{j,s}$ with $(j,s) \preceq (i,t)$. We have $\Gamma'_{2m,\ell} = \Gamma'$. As in Lemma \ref{prop:enumpath}, we now inductively define a spanning forest $T_{i,t}$ of $\Gamma'_{i,t}$ as follows. $T_{1,0}$ has no edge and a vertex set $\{1\}$. We say that $(i ,t)$ is a {\em first time} if adding the edge $e_{i,t}$ to $T_{(i,t)^-}$ does not create a (weak) cycle.  Then, if $(i,t)$ is a first time, we add to $T_{(i,t)^-}$ the edge $e_{i,t}$. It gives $T_{i,t}$. If $(i,t)$ is not a first time, we set $T_{i,t} = T_{(i,t)^-}$. We set $T = T_{2m,\ell}$.

 For each even $i$, we define the {\em first merging time} $(i,t'_i)$ as the smallest time $(i,t)$ with $1 \leq t \leq k-1$ such that $T_{i,t}$ and  $T_{(i,1)^-}$ have the same number of connected components. If this time does not exist, we set $t'_i  = k$. Similarly, for each $i$, the {\em second merging time} $(i,t''_i)$ is the smallest time $(i,t)$ with $k \leq t \leq \ell$ such that $T_{i,t}$ and  $T_{(i,k)^-}$ have the same number of connected components. If this time does not exist, we set $t''_i  = \ell+1$.  If $i$ is even then  by \eqref{eq:bcond}, we have $t''_i \leq  \ell$.

 Note that the merging time will be a first time if $t_i \geq 2$.

The edges of $\Gamma' \backslash T$ will be called {\em excess edges}. We call $(i,t)$ an  {\em important time} if the visited edge $e_{i,t}$ is an excess edge. The total number of excess edges is $|E_\gamma|- |V_\gamma|  + N_\gamma= g' + N_\gamma $ where $1 \leq N_\gamma \leq 2m$ is the number of connected components of $\bar \Gamma'$. However, since each connected component has at least a cycle, in each connected component of $T$, there are at most
$g'+1$ excess edges.

By construction, the path $\gamma_i'$ or $\gamma''_i$ can be decomposed by the successive repetition of
 \begin{itemize}
 \item[(1)] a sequence of first times (possibly empty);
 \item[(2)] an important time or the merging time;
 \item [(3)] a path using the colored edges of the forest defined so far (possibly empty).
\end{itemize}

We build a first encoding of the sequence $ (\bar \gamma_{i,t})_{i \in [2m],t \in [\ell]}$ as follows. If $(i,t)$ is an  important time, we mark the time $(i,t)$  by the vector $(\bar y_{i,t}, \bar x_{i,t+1}, \bar x_{i,\tau})$, where $(i,\tau)$ is the next step outside $T_{i,t}$ (by convention, if the path $\gamma_i$ remains on the forest, we set $\tau = \ell+1$). By construction, $(i,\tau)$ is also the next first, important or merging time. Note that $  x_{i,t+1}$ or $ x_{i,\tau}$ could be seen for the first time (then by construction, $x_{i,t+1}$ or $x_{i,\tau}$ would belong to a connected component which has already been seen). If this is the case, we replace $\bar x_{i,t+1}$ or $\bar x_{i,\tau}$ by $\vec x_{i,t+1}$ or $\vec x_{i,\tau}$ and we call this extra mark the {\em connected component mark}. Similarly if $(i,t)$ is a first merging time, we mark the time $(i,t)$  by the {\em  first merging time mark} $(\bar y_{i,t},  \bar x_{i,t+1}, \bar x_{i,\tau})$, where $(i,\tau)$ is the next step outside $T_{i,t}$. Similarly, the {\em second merging time mark} is $(\bar y_{i,k}, \bar y_{i,t},  \bar x_{i,t+1}, \bar x_{i,\tau})$. Again, if  $  x_{i,t+1}$ or $ x_{i,\tau}$ are seen for the first time, we replace $\bar x_{i,t+1}$ or $\bar x_{i,\tau}$ by the connected component mark. Arguing as in the proof of Lemma \ref{prop:enumpath}, it gives a first encoding of the sequence $ (\bar \gamma_{i,t})_{i \in [2m],t \in [\ell]}$.

Observe that  $p = \sum_{i=1}^{s-p} (l_i -1)$ where $l_i$ is the size of the $i$-th connected component of $K_\gamma$. Hence $p$ is equal to the number of connected component marks and it is upper bounded by twice the number of excess edges plus the number of merging times:
$$
p \leq 2\PAR{ g'  + N_\gamma  +3m} \leq 2 g' + 10m.
$$ It proves the second statement of the lemma.

 Arguing as in the proof of Lemma \ref{prop:enumpath}, to improve on the first encoding we use the hypothesis that each path $\gamma'_i$ or $\gamma''_i$ is tangle-free. We partition important times into three categories {\em short cycling}, {\em long cycling} and {\em superfluous} times. For each $i$ and $\veps \in \{' , ''\}$, consider the smallest time $(i,t_0)$ such that $\cc(x_{i,t_0+1} )\in \{ \cc(x_{i,1}), \ldots, \cc(x_{i,t_0}) \}$.  Let $1 \leq \sigma \leq t_0$ be such that $\cc(x_{i,t_0+1} )= \cc(x_{i,\sigma})$. By assumption, $C_i = (\bar \cc(x_{i,\sigma}), \ldots, \bar \cc(x_{i,t_0+1}))$ will be the unique cycle of $\Gamma'$ visited by $\gamma^\veps_i$. The last important time $(i,t) \preceq (i,t_0)$ will be called the {\em short cycling} time. We denote by $(i,\hat t)$ the smallest time   $(i,\hat t) \succeq (i,\sigma)$ such that $\bar \cc (x_{i,\hat t+1})$ is not in $C_i$ (by convention $\hat t = \ell+1$ if $\gamma^\veps_i$ remains on $C_i$). We modify  the mark of the short cycling time as  $(\bar y_{i,t}, \bar x_{i,t+1},\sigma, \hat t, \bar x_{i,\tau})$, where $(i, \tau)$, $ \tau \geq \hat t$, is the next step outside $T_{i,t}$ (it is the next first or important time after $(i,\hat t)$, by convention $ \tau = \ell +1$ if the path remain on the tree). Important times $(i,t')$ with $1 \leq t' <  t$ or $\tau \leq t' \leq \ell$ are called long cycling times. The other important times are called superfluous. As argued in the proof of Lemma \ref{prop:enumpath}, for each $ i \in [2 m]$ and $\veps \in \{' , ''\}$, the number of long cycling times in $\gamma^\veps_i$ is bounded by  $g'$ (recall that there are at most $g'+1$ excess edges in the connected component of $\gamma_i^\veps$).

We now have our second encoding. We can reconstruct the sequence $ (\bar \gamma_{i,t})_{i \in [2m],t \in [\ell]}$ from the positions of the merging times, the long cycling  and the short cycling times and their respective marks. For each $i$ and $\veps \in \{' , '' \}$, there are at most $1$ short cycling time, $1$ merging times and $g'$ long cycling times. There are at most $ \ell ^{4m (g'+2)}$ ways to position them. Note that $|Y_\gamma| \leq a+2m  = a'$, the term $2m$ coming from the elements $y_{i,k}$, $i \in [2m]$. Hence, as argued in the proof of Lemma \ref{prop:enumpath},  there are at most $2^{4m}$ possibilities for a connected component mark, at most $ a' s^2 $ different possible marks for a long cycling time, $a' s^2 \ell^2 $ marks for a short cycling time, at most $a' s^2$ marks for the first merging time mark and ${a'}^2 s^2$ for the second merging time. We deduce that
\begin{eqnarray*}
| \mathcal{W}' _{\ell,m} (s,a,p) |& \leq &  \ell^{4 m (g'+2)} ( 2^{4m }  )^{p}  (a' s^2 )^{m} ({a' }^2s^2 )^{2m}  ( {a'} s^2   ) ^{4m g' }(a' s^2  \ell^2) ^{4m}. \\
& \leq &  \ell^{4 m (g'+4)} 2^{4m p} ({a' }^2 s^2  )^{ 4m (g'+1) }.
\end{eqnarray*}
It concludes the proof.
\end{proof}

\begin{lemma}\label{le:sumpathR} For any $\gamma \in W'_{\ell,m}(s,a,p)$,
\begin{equation*}
\sum_{\stackrel{\gamma'  \in W'_{\ell,m} (s,a,p) :}{  \gamma' \sim \gamma}} q(\gamma') \leq  d^p n^{s-p}.
\end{equation*}
\end{lemma}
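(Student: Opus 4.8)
The plan is to notice that this estimate is far coarser than Lemma~\ref{le:sumpath}, so that bistochasticity of $Q$ alone suffices and neither tangle-freeness nor the refined decomposition of $q$ over the colour graph $\Gamma'$ is needed. The two elementary facts I would use are that every entry satisfies $0\le Q_{yx}\le 1$ (because $Q\IND=\IND$ and $Q$ has nonnegative entries) and that $\sum_{y}Q_{yx}=1$ for every $x$ (because $Q^\intercal\IND=\IND$).

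First I would describe the equivalence class of $\gamma$ concretely. An element $\gamma'\in W'_{\ell,m}(s,a,p)$ with $\gamma'\sim\gamma$ is obtained from $\gamma$ by substituting into its ``shape'' an injective labelling of the $s$ vertices of $K_\gamma$ by elements of $[n]$ that realizes $K_\gamma$ as a subgraph of the graph on $[n]$ whose edges are the pairs $\{x,x'\}$ with $(Q^\intercal Q)_{xx'}>0$, together with some labelling of the elements of $Y_\gamma$; the tuple $\gamma'=(\gamma'_1,\dots,\gamma'_{2m})$ is then determined by these two labellings. Since $q(\gamma')\ge 0$, it is enough to sum $q$ over all such pairs of labellings, relaxing any further constraints (in particular allowing the $Y_\gamma$-labelling to be an arbitrary function); this only enlarges the bound. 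The number of admissible vertex labellings is at most $n^{s-p}d^p$ by exactly the count already used in the proof of Lemma~\ref{le:sumpath}: $K_\gamma$ has $s-p$ connected components, a first vertex of each component has at most $n$ possible images, and each of the remaining $p$ vertices, added in a connectivity-preserving order, has at most $\NRM{Q^\intercal Q}_{1\to 0}=d$ possible images.

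It then remains to bound, for each fixed vertex labelling, the sum of $q(\gamma')$ over all labellings of $Y_\gamma$ by $1$. Writing $q(\gamma')=\prod_{i=1}^{2m}\prod_{t=1}^{\ell}Q_{y_{i,t}x_{i,t+1}}$ as in \eqref{eq:defmugR} and grouping the factors according to the identity of the $y$-coordinate — the ``middle'' coordinates $y_{i,k}$, $i\in[2m]$, being treated on the same footing as the others — the sum over $Y_\gamma$-labellings factorizes over the distinct $y$-identities, since distinct identities receive independent values. For a single identity $\eta$, keeping one factor and bounding every other factor by $1$ gives $\sum_{z\in[n]}\prod_{(i,t):\bar y_{i,t}=\eta}Q_{z,x_{i,t+1}}\le\sum_{z\in[n]}Q_{z,x_{i_0,t_0+1}}=1$ for any fixed pair $(i_0,t_0)$ with $\bar y_{i_0,t_0}=\eta$. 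Multiplying over the distinct identities yields $1$, and combining with the vertex count gives $\sum_{\gamma'\sim\gamma}q(\gamma')\le d^{p}n^{s-p}$.

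I do not expect a real obstacle; the only point that needs care is the bookkeeping in the middle paragraph — checking that the map (vertex labelling, $Y_\gamma$-labelling) $\mapsto\gamma'$ surjects onto the equivalence class and that the relaxations to arbitrary labellings preserve the inequality direction — and recording, for later use, that this argument does \emph{not} use that $\gamma$ is made of tangle-free subpaths.
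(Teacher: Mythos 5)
Your proof is correct and follows essentially the same route as the paper: both bound the number of admissible vertex labellings by $n^{s-p}d^p$ exactly as in Lemma~\ref{le:sumpath}, and both reduce the sum over $y$-labellings to a product of terms each bounded by $1$ using only $0\le Q_{yx}\le 1$ and $\sum_y Q_{yx}=1$. The only cosmetic difference is that you group the $Q$-factors by $y$-identity while the paper groups them by edges of the colour graph $\Gamma'$; the resulting estimates are identical.
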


\begin{proof}
The proof follows easily from the proof of Lemma \ref{le:sumpath}. Let $\Gamma' = (V_\gamma, E'_\gamma)$ be the graph defined in Proposition \ref{prop:enumpathR}. Arguing as in \eqref{eq:qgf}, we have an upper bound of the form
\begin{equation*}
\sum_{\gamma' :  \gamma' \sim \gamma} q(\gamma') \leq \sum_{\star} \prod_{e \in E_\gamma} \PAR{ \sum_y Q^{k_1}_{y x'_1} \cdots Q^{k_{b}}_{y x'_{b}}},
\end{equation*}
where the first sum $\displaystyle{\sum_{\star}}$ is over all possible choices for the distinct elements in $X_{\gamma'}$, and the positive integers $k_j$ and the elements $x'_j \in X_{\gamma'}$ are determined by the edge $e$. Since $k_j \geq 1$ and $ \sum_y Q_{yx} = 1$, we have
$$
\sum_y Q^{k_1}_{y x'_1} \cdots Q^{k_{d}}_{y x'_{d}} \leq 1.
$$
It follows that
$
\sum_{\gamma' :  \gamma' \sim \gamma} q(\gamma') $ is upper bounded by number of possible choices for  $X_{\gamma'}$. The latter is bounded by $ d^p n^{s-p}$ as explained in the proof of Lemma \ref{le:sumpath}.
\end{proof}

We finally estimate $\mu'(\gamma)$.\begin{lemma}\label{le:meanpathR}
There is a constant $c > 0$  such that,  if $\gamma \in W_{\ell,m} (s,a,p) $, $g= a - s + p $ and  $a_1$ is the number of arcs in $A_\gamma$ which are visited exactly once in $\gamma$, then we have
$$
\ABS{ \mu'(\gamma)} \leq  c^{m+g'}   n^{-a}.
$$
\end{lemma}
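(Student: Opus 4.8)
The plan is to reduce the estimate to Proposition \ref{prop:Epath} by flattening the family $\{(x_{i,t},y_{i,t})\}$ into a single sequence, and then to control the number of inconsistent arcs by the same graph book-keeping used in the proof of Lemma \ref{le:meanpath}, now adapted to the graph $\Gamma'$ of Lemma \ref{prop:enumpathR}.

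First rewrite $\mu'(\gamma)$ as a single expected product. Concatenate the pairs $(x_{i,t},y_{i,t})$ with $i\in[2m]$, $t\in[k-1]$ (which carry the factor $\underline M$) and then the pairs with $i\in[2m]$, $t\in\{k+1,\dots,\ell\}$ (which carry $M$); since multiplication commutes, $\mu'(\gamma)$ equals the expectation of the associated product of length $K=2m(\ell-1)$ with its $k_0=2m(k-1)$ centred factors placed first. The arc set of this flattened sequence is exactly $A'_\gamma$, since the columns $t=k$ are absent by construction, so it has cardinality $a$. Because $\ell\leq\log n$ and $12m<h=\lceil 20\sqrt{\log n}\rceil$, we have $K=O\big((\log n)^{3/2}\big)$, hence $2K\leq\sqrt n$ and $3K\leq\sqrt n$ for $n$ large; Proposition \ref{prop:Epath} then gives
$$
\ABS{\mu'(\gamma)}\ \leq\ c\,2^{b}\,n^{-a}\,\PAR{\frac{3K}{\sqrt n}}^{a_1}\ \leq\ c\,2^{b}\,n^{-a},
$$
where $b$ is the number of inconsistent arcs of the flattened sequence and the last inequality uses $3K\leq\sqrt n$, so that the last factor is at most $1$ (with the convention that an empty product equals $1$).

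It remains to bound $b$. Here one runs the argument from the proof of Lemma \ref{le:meanpath} essentially verbatim, using that $\gamma\in W'_{\ell,m}$ is built from the $4m$ tangle-free paths $\gamma'_i,\gamma''_i$ ($i\in[2m]$) and the spanning forest constructed in the proof of Lemma \ref{prop:enumpathR}. A new inconsistent arc can only be created at a step that leaves the forest built so far, and such a step is preceded by the first visit of an excess edge or by one of the (first or second) merging times; each such step contributes at most $2$ inconsistent arcs. Since $\Gamma'$ has $g'+N_\gamma\leq g'+2m$ excess edges (with $1\leq N_\gamma\leq 2m$ the number of weak components of $\bar\Gamma'$) and at most $3m$ merging times, we obtain $b\leq 2(g'+2m)+3m = 2g'+O(m)$; as in Lemma \ref{le:meanpath} this could be sharpened using that returns to a connected component beyond the first create no inconsistent arc, but the crude bound $b\leq C(g'+m)$ is all that is needed. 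Therefore $2^b\leq c_1^{\,g'}c_2^{\,m}$, and since $m\geq 1$ we may absorb the leading constant to get $\ABS{\mu'(\gamma)}\leq c^{\,m+g'}n^{-a}$.

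The only real work is in the last step, and it is pure book-keeping: one has to check that the inconsistent-arc count of the proof of Lemma \ref{le:meanpath} survives the replacement of $2m$ tangle-free paths by $4m$, and that the relevant parameter is the genus $g'=a-s+p$ from Lemma \ref{prop:enumpathR} rather than $g=a-s+p+1$. There is no new idea; the reduction to Proposition \ref{prop:Epath} via flattening is routine once one observes that discarding the columns with $t=k$ is precisely what makes the flattened arc set equal to $A'_\gamma$.
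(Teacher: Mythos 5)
Your proof is correct and follows essentially the same route as the paper's: reduce to Proposition \ref{prop:Epath} after flattening the double-indexed product (dropping the factor $(3K/\sqrt n)^{a_1}\leq 1$), and bound the number of inconsistent arcs by $O(g'+m)$ using the count of excess edges ($\leq g'+2m$) and merging times ($\leq 3m$) from the proof of Lemma \ref{prop:enumpathR}. Your explicit checks (that the flattened arc set is $A'_\gamma$ and that $2K\leq\sqrt n$) only fill in details the paper leaves implicit, and the minor slack in your constant for $b$ is harmless since only $b\leq C(g'+m)$ is needed.
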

\begin{proof}
Let $A_* $ be the set of inconsistent arcs of $A'_\gamma$ (as defined above Proposition \ref{prop:Epath}). Using the terminology of the proof of Proposition \ref{prop:enumpathR} and as argued in Lemma \ref{le:meanpathR}, $|A_*|$ is upper bounded by four times  the number of excess edges plus twice the number of merging times. There are at most $g' + 2m$ excess edges and $3m$ merging times, hence,
$$
|A_*| \leq  4 (g' + 2m)  +   6m.$$ It remains to apply  Proposition \ref{prop:Epath}.
\end{proof}

We are ready to prove Proposition \ref{prop:normR}.

\begin{proof}[Proof of Proposition \ref{prop:normR}]
We define
\begin{equation}\label{eq:choicemR}
m = \left\lceil  \sqrt{ \log n }\right\rceil.
\end{equation}
For this choice of $m$, $n ^{  1 / m } \leq  \exp ( \sqrt{ \log n} )$. Hence, from \eqref{eq:2sum} and Markov inequality, it suffices to prove that for some $c >0$,
\begin{equation}\label{eq:boundSR}
S =  \sum_{s, a, p} | \cW' (s,a,p) | \max_{ \gamma \in W'(s,a,p)}  \PAR{ | \mu'(\gamma) | \sum_{{\stackrel{\gamma'  \in W_{\ell,m} (s,a,p) :}{  \gamma' \sim \gamma}}}     q(\gamma') }  \leq e^{ c m^2 }.
\end{equation}

Let $\gamma \in W'_{\ell,m} (s,a,p)$. Set $g'  = g'(s,a,p) = a - s + p $, by Lemma \ref{le:sumpathR} and Lemma \ref{le:meanpathR},
$$| \mu'(\gamma) | \sum_{{\stackrel{\gamma'  \in W'_{\ell,m} (s,a,p) :}{  \gamma' \sim \gamma}}}     q(\gamma') \leq    d  ^{p}   c^{m+g'}   n^{-g'}.
$$

Now, by Lemma \ref{prop:enumpathR}, since $a \leq 2 \ell m$, $s \leq 2 \ell m + 1 \leq 3 \ell m$, for some new constant $c >1$ changing from line to line,
\begin{eqnarray*}
S & \leq &     \sum_{ s , a, p :  g' (s,a,p) \geq 0, p \leq 2g' (s,a,p) + 10 m} 2^{4m p } ((a +2m)^2 s^2 \ell  )^{4 m ( g'+4)}   d  ^{p} n^{-g'}   c^{m+g'}     \\
 & \leq &  c^m (c \ell m)^{80m}   \sum_{s , g' , p : g' \geq 0, p \leq 2g' + 10 m}  2^{4m p } (c \ell  m  )^{20 m g' } d^{p}  n^{-g'}  ,
 \end{eqnarray*}
 where at the last line, we have performed the change of variable $a \to  g' = a +p -s$. Then, we may sum over $p$, using $(\log n)^c = e^{ o (m)}$ and $ d \leq e^{ m}$, we get for some new constant $c >0$,
\begin{eqnarray*}
S & \leq &  e^{c m^2}  \sum_{s, g' \geq 0}   \PAR{\frac{ L  }{n} }^{g'},
 \end{eqnarray*}
where we have set $L = (c \ell  m)^{20m}$. Since $s \leq 3 \ell m = e^{o(m)}$ and $L/n = o(1)$, we deduce that \eqref{eq:boundSR} holds.
\end{proof}

\section{Proof of Theorem \ref{th:main2}}
\label{sec:end}

All ingredients are finally gathered to prove Theorem \ref{th:main2}.  We start by reducing the range of $\ell$ and $d$ where there is something to be proven. Up to adjusting the final constant $c_1$, we may assume without loss of generality that $d \leq \exp(\sqrt{ \log n})$ and $\ell \leq \log n / \log d$  (otherwise the probabilistic bound is larger than $1$). We fix any $0 < c_0 < c'_0 < \delta$.  Then by Lemma \ref{le:tangle} and Lemma \ref{le:decompBl}, if $\Omega$ is the event that $G$ is $\ell$-tangle free, for any $c > 0$,
\begin{eqnarray*}
\dP \PAR{ \| P^\ell_{ | \IND ^\perp }  \|  \geq  e^{ c \sqrt{ \log n} } \rho^\ell } & = &\dP \PAR{\| P^\ell_{ | \IND ^\perp }  \|  \geq  e^{ c \sqrt{ \log n}   }\rho^\ell ; \Omega} + O( d^{\ell + 2h} n^{-c '_0}) \\
& \leq & \dP \PAR{ J   \geq  e^{ c \sqrt{ \log n} }  \rho^\ell } + O(d^{\ell+2 h} n^{-c'_0}),
\end{eqnarray*}
where $$J = \|  \uP^{(\ell)} \|   +  \frac 1{n}   \sum_{k = 1}^\ell \| R^{(\ell)}_{k} \|.$$
On the other end, by Propositions \ref{prop:normDelta}-\ref{prop:normR}, for some $c'_1 > 0$, with probability at least $1 - 2 n^{-c'_0}$,
\begin{align*}
J  &\leq e^{ c'_1 \sqrt{ \log n}   }\rho^\ell  +   \frac 1 n \sum_{k = 1}^\ell  e^{ c'_1 \sqrt{ \log n}   } \\
&\leq \PAR{ e^{ c'_1 \sqrt{ \log n}   } + \ell e^{\frac{\ell}{2} \log d -\log n} } \rho^\ell,
\end{align*}
where we have used  $\rho \geq 1/ \sqrt d$ by \eqref{eq:lbQHS}. Since $\ell \leq \log n / \log d$, we find that the event
\begin{align*}
J  &\leq \PAR{  e^{ c'_1 \sqrt{ \log n} }  + \frac{ \ell}{\sqrt n}    }\rho^\ell.
\end{align*}
has probability at least  $1 - 2 n^{-c'_0}$. We take any $c > c'_1$ and it remains to adjust the final constant $c_1 > c$ to deal with bounded values of $n$. It concludes the proof of Theorem \ref{th:main2}.

\begin{remark}\label{rk:bunif}
Lemma \ref{le:tangle} and Proposition \ref{prop:Epath} are the only properties of the uniform measures on $\mathbb S_n$ which have been used in the proof. Proposition \ref{prop:Epath} is used in Lemma \ref{le:meanpath} and Lemma \ref{le:meanpathR} where we use that the number of inconsistent arcs is at most $c (g + m)$. The proof may thus be extended to other probability measures on $\mathbb S_n$ with other notions of inconsistency. For example, if $n$ is even, the set of matching $\mathbb M_n$ is the subset of permutations $\sigma \in \mathbb S_n$ such that $\sigma(x) \ne x$ and $\sigma^2 (x) = x$ for all $x \in [n]$. Following \cite{bordenaveCAT}, analogs of Lemma \ref{le:tangle} and Proposition \ref{prop:Epath} hold for the uniform measure on $\mathbb M_n$ (the definition of a consistent arc is slightly more constrained for matchings, but in Lemma \ref{le:meanpath} and Lemma \ref{le:meanpathR}, we may still upper bound the number of inconsistent arcs by $c(m+g)$).
\end{remark}

\begin{remark}\label{rk:bbis}
Proposition \ref{prop:normDelta} and Proposition \ref{prop:normR} are true beyond bistochastic matrices. An inspection of the proof reveals that they hold for any matrix $Q$ provided that $\max_{x} \sum_{y} |Q_{xy}| \leq c$ for some constant $c >0$ (which will have an influence on all other constants).
\end{remark}

\section{Proof of corollaries}

\subsection{Proof of Theorem \ref{th:RWDRW}}

By construction, we have
$
Q_{xy} =  \IND ( (x,y) \in E) / r.
$
It follows that
\begin{equation}\label{eq:qiohs}
\|Q\|_{1 \to \infty} = \frac 1 r \AND \NRMHS{Q} = \frac 1 {\sqrt{r}}.
\end{equation}
It remains to apply Theorem \ref{th:main} with $\delta = 1$.

\subsection{Proof of Corollary \ref{cor:SRDG}}

Let $\cP$ be the set of bi-stochastic matrices of size $n$ with entries in $\{0,1/r\}$.  From the proof of Theorem \ref{th:RWDRW}, for any $Q \in \cP$, \eqref{eq:qiohs} holds.
Note that $A = M B$ for some permutation matrix $M$ is equivalent to $M^* A= B$. It follows that for any permutation matrix $M$,  if $P$ is uniformly  sampled over $\cP$, $P$ and $MP$ have the same distribution. In particular,  $P$ and $MP$ have the same distribution for $M$ uniformly distributed and independent of $P$. We may thus apply Theorem  \ref{th:RWDRW} to $MP$ by conditionning on the value of $P$.


\subsection{Proof of Theorem \ref{th:wperm}}

Up to increasing the constant $c_1$, we may assume that $r \leq \exp ( \sqrt{\log n} )$.  Obviously, if $x \notin S$,
$$
\max_{y} Q_{xy} = \max_i p_i \leq \sqrt{\sum_i p_i^2}.
$$
From our assumption on $S$, it follows that $\NRM{ Q }_{1 \to \infty}^{(\delta)} \leq \sqrt{\sum_i p_i^2}$.

Moreover, we have
$$
Q^\intercal Q = \sum_{i, j} p_i p_j M_i^{*} M_j = \sum_{i} p_i^2 I + \sum_{j \ne i}  p_i p_j M_i^{*} M_j.
$$
From the triangle inequality, we deduce that
$$
\NRMHS{Q} \leq \NRMHS{\sum_{i}p_i^2 I} + \sum_{j \ne i}  p_i p_j \NRMHS{M_i^{*} M_j} = \sqrt{\sum_i p_i^2 } + \sum_{i \ne j} p_i p_j \sqrt{ \frac{1}{n} \sum_{x=1}^n \IND ( \sigma_i (x) = \sigma_j(x))}.
$$
It follows that $\NRMHS{Q} \leq \rho +  \sqrt{|S| / n}  \leq (1 + r^{1/2} n^{-\delta/2} ) \rho$ (where we have used $\sum_i p_i = 1$ and $\sum_i p_i^2 \geq 1/r$). It remains to apply Theorem \ref{th:main}.
\subsection{Proof of Corollary \ref{cor:ani}}

Let $0 < c_0 < 1$ and fix some $c_0 < \delta <1$.  Up to increasing the constant $c_1$, we may assume that $r \leq \exp ( \sqrt{\log n} )$. For any permutation matrix $M$, $P$ has the same distribution than $ MP$. In particular,  $P$ and $MP$ have the same distribution for $M$ uniformly distributed and independent of $M_1, \ldots, M_r$.
Now, let $S = \{ x \in [n] : \exists i \ne j, \sigma_i (x) = \sigma_j (x)\}$. From the union bound, we have
$$
\dE |S| \leq r (r-1) \dP ( \sigma_1 (x) = \sigma_2(x) ) = \frac{r (r-1)}{n}.
$$
Hence, from Markov inequality,
$$
\dP ( |S| \geq n^{1 - \delta}) \leq r^2 n^{\delta-2}.
$$
Finally, on the event $\{ S < n^ {1 - \delta} \}$, we apply Theorem  \ref{th:wperm} for $MP$ by conditioning on the value of $P$.

\subsection{Proof of Theorem \ref{cor3}}

Let $n\geq 1/ r$. From the definition of the transition matrix $Q$ of $f$, and the hypothesis that $f$ maps $I_{k}$ fully to $[0,1]$, it implies that
\begin{align*}
\NRMHS{Q}=\sqrt{ \frac{1}{n} \sum_{x,y} |Q_{xy}|^{2} }=\sqrt{\frac{1}{n}\cdot n \sum_{k=1}^{r} r^{2}}=\frac{1}{\sqrt{r}}.
\end{align*}
Note also that for all $x \in [n]$,  $\max_{y}Q_{yx} \leq \rho$. Similarly, we  get
$$
\NRM{ Q^\intercal Q}_{1 \to 0} = \max_{x \in [n]} | \{ x' : \exists y , Q_{xy} Q_{x' y} \ne 0 \} \leq (1 / r) ^2  .
$$
Finally, we apply Theorem \ref{th:main} with $\delta = 1$ and use that the second largest eigenvalue of $MQ$ in absolute value is equal to $\tau_{f \circ \bar  \sigma}$, whenever $\tau^{\Upsilon}_{f\circ \bar\sigma}\geq(1+\epsilon)\rho>1/r\geq r_{e}(ess_{\Upsilon}(\mathcal{L}_{f}))$.

\bibliographystyle{abbrv}
\bibliography{bib}

\begin{thebibliography}{10}

\bibitem{Aref84}
H.~Aref.
\newblock Stirring by chaotic advection.
\newblock {\em Journal of Fluid Mechanics}, 143:1–21, 1984.

\bibitem{Aref04}
H.~Aref, J.~R. Blake, M.~Budi\ifmmode \check{s}\else
  \v{s}\fi{}i\ifmmode~\acute{c}\else \'{c}\fi{}, S.~S.~S. Cardoso, J.~H.~E.
  Cartwright, H.~J.~H. Clercx, K.~El~Omari, U.~Feudel, R.~Golestanian,
  E.~Gouillart, G.~F. van Heijst, T.~S. Krasnopolskaya, Y.~Le~Guer, R.~S.
  MacKay, V.~V. Meleshko, G.~Metcalfe, I.~Mezi\ifmmode~\acute{c}\else
  \'{c}\fi{}, A.~P.~S. de~Moura, O.~Piro, M.~F.~M. Speetjens, R.~Sturman, J.-L.
  Thiffeault, and I.~Tuval.
\newblock Frontiers of chaotic advection.
\newblock {\em Rev. Mod. Phys.}, 89:025007, Jun 2017.

\bibitem{ANK02}
P.~Ashwin, M.~Nicol, and N.~Kirkby.
\newblock Acceleration of one-dimensional mixing by discontinuous mappings.
\newblock {\em Physica A: Statistical Mechanics and its Applications},
  310(3):347 -- 363, 2002.

\bibitem{Bal00}
V.~Baladi.
\newblock {\em Positive transfer operators and decay of correlations},
  volume~16 of {\em Advanced Series in Nonlinear Dynamics}.
\newblock World Scientific Publishing Co., Inc., River Edge, NJ, 2000.

\bibitem{Bal15}
S.~Balasuriya.
\newblock Dynamical systems techniques for enhancing microfluidic mixing.
\newblock {\em Journal of Micromechanics and Microengineering}, 25(9):094005,
  2015.

\bibitem{BCZ}
A.~Basak, N.~Cook, and O.~Zeitouni.
\newblock Circular law for the sum of random permutation matrices.
\newblock arXiv:1705.09053.

\bibitem{bordenaveCAT}
C.~Bordenave.
\newblock A new proof of {F}riedman{'}s second eigenvalue theorem and its
  extension to random lifts.
\newblock arXiv:1502.04482, 2015.

\bibitem{MR3758726}
C.~Bordenave, M.~Lelarge, and L.~Massouli\'e.
\newblock Nonbacktracking spectrum of random graphs: community detection and
  nonregular {R}amanujan graphs.
\newblock {\em Ann. Probab.}, 46(1):1--71, 2018.

\bibitem{BoGo97}
A.~Boyarsky and P.~G\'ora.
\newblock {\em Laws of chaos}.
\newblock Probability and its Applications. Birkh\"auser Boston, Inc., Boston,
  MA, 1997.
\newblock Invariant measures and dynamical systems in one dimension.

\bibitem{MR3021362}
N.~P. Byott, M.~Holland, and Y.~Zhang.
\newblock On the mixing properties of piecewise expanding maps under
  composition with permutations.
\newblock {\em Discrete Contin. Dyn. Syst.}, 33(8):3365--3390, 2013.

\bibitem{Cook}
N.~Cook.
\newblock The circular law for random regular digraphs.
\newblock arXiv:1703.05839.

\bibitem{simon}
S.~Coste.
\newblock The spectral gap of sparse random digraphs.
\newblock arXiv:1708.00530.

\bibitem{FHW02}
D.~R. Fereday, P.~H. Haynes, A.~Wonhas, and J.~C. Vassilicos.
\newblock Scalar variance decay in chaotic advection and batchelor-regime
  turbulence.
\newblock {\em Phys. Rev. E}, 65:035301, Feb 2002.

\bibitem{MR1219707}
A.~Fig\`a-Talamanca and T.~Steger.
\newblock Harmonic analysis for anisotropic random walks on homogeneous trees.
\newblock {\em Mem. Amer. Math. Soc.}, 110(531):xii+68, 1994.

\bibitem{FGW16}
G.~Froyland, C.~Gonz\'alez-Tokman, and T.~M. Watson.
\newblock Optimal mixing enhancement by local perturbation.
\newblock {\em SIAM Rev.}, 58(3):494--513, 2016.

\bibitem{MR2831116}
A.~Guionnet, M.~Krishnapur, and O.~Zeitouni.
\newblock The single ring theorem.
\newblock {\em Ann. of Math. (2)}, 174(2):1189--1217, 2011.

\bibitem{MR3000558}
A.~Guionnet and O.~Zeitouni.
\newblock Support convergence in the single ring theorem.
\newblock {\em Probab. Theory Related Fields}, 154(3-4):661--675, 2012.

\bibitem{MR1784419}
U.~Haagerup and F.~Larsen.
\newblock Brown's spectral distribution measure for {$R$}-diagonal elements in
  finite von {N}eumann algebras.
\newblock {\em J. Funct. Anal.}, 176(2):331--367, 2000.

\bibitem{HM98}
D.~Hobbs and J.~Muzzio.
\newblock Reynolds number effects on laminar mixing in the kenics static mixer.
\newblock {\em Chemical Engineering Journal}, 70:93--104, 1998.

\bibitem{MR1005524}
G.~Keller.
\newblock Markov extensions, zeta functions, and {F}redholm theory for
  piecewise invertible dynamical systems.
\newblock {\em Trans. Amer. Math. Soc.}, 314(2):433--497, 1989.

\bibitem{KSW17}
H.~Kreczak, R.~Sturman, and M.~Wilson.
\newblock Deceleration of one-dimensional mixing by discontinuous mappings.
\newblock {\em Physical review E}, 96:053112, 2017.

\bibitem{KCOL12}
M.~K. Krotter, I.~C. Christov, J.~M. Ottino, and R.~M. Lueptow.
\newblock Cutting and shuffling a line segment: Mixing by interval exchange
  transformations.
\newblock {\em I. J. Bifurcation and Chaos}, 22, 2012.

\bibitem{MR0335758}
A.~Lasota and J.~A. Yorke.
\newblock On the existence of invariant measures for piecewise monotonic
  transformations.
\newblock {\em Trans. Amer. Math. Soc.}, 186:481--488 (1974), 1973.

\bibitem{LRMTOH10}
D.~R. Lester, M.~Rudman, G.~Metcalfe, M.~G. Trefry, A.~Ord, and B.~Hobbs.
\newblock Scalar dispersion in a periodically reoriented potential flow:
  Acceleration via lagrangian chaos.
\newblock {\em Phys.Rev.E}, 81:046319, 2010.

\bibitem{MR3726904}
D.~A. Levin, Y.~Peres, and E.~L. Wilmer.
\newblock {\em Markov chains and mixing times}.
\newblock American Mathematical Society, Providence, RI, 2017.
\newblock Second edition of [ MR2466937], With a chapter on ``Coupling from the
  past'' by James G. Propp and David B. Wilson.

\bibitem{LLTTY}
A.~Litvak, A.~Lytova, K.~Tikhomirov, N.~Tomczak-Jaegermann, and P.~Youssef.
\newblock Circular law for sparse random regular digraphs.
\newblock arXiv:1801.05576.

\bibitem{LLC12}
D.~V. Louzguine-Luzgin, L.~V. Louzguina-Luzgina, and A.~Y. Churyumov.
\newblock Mechanical properties and deformation behavior of bulk metallic
  glasses.
\newblock {\em Metal}, 3:202--218, 2012.

\bibitem{MR576928}
D.~H. Mayer.
\newblock {\em The {R}uelle-{A}raki transfer operator in classical statistical
  mechanics}, volume 123 of {\em Lecture Notes in Physics}.
\newblock Springer-Verlag, Berlin-New York, 1980.

\bibitem{MR3585560}
J.~A. Mingo and R.~Speicher.
\newblock {\em Free probability and random matrices}, volume~35 of {\em Fields
  Institute Monographs}.
\newblock Springer, New York; Fields Institute for Research in Mathematical
  Sciences, Toronto, ON, 2017.

\bibitem{Mori90}
M.~Mori.
\newblock Fredholm determinant for piecewise linear transformations.
\newblock {\em Osaka J. Math.}, 27(1):81--116, 1990.

\bibitem{MR3164983}
M.~Rudelson and R.~Vershynin.
\newblock Invertibility of random matrices: unitary and orthogonal
  perturbations.
\newblock {\em J. Amer. Math. Soc.}, 27(2):293--338, 2014.

\bibitem{MR1920859}
D.~Ruelle.
\newblock Dynamical zeta functions and transfer operators.
\newblock {\em Notices Amer. Math. Soc.}, 49(8):887--895, 2002.

\bibitem{SRLM16}
L.~D. Smith, M.~Rudman, D.~R. Lester, and G.~Metcalfe.
\newblock Bifurcations and degenerate periodic points in a three dimensional
  chaotic fluid flow.
\newblock {\em Chaos}, 26(5):053106, 13, 2016.

\bibitem{SOW06}
R.~Sturman, J.~M. Ottino, and S.~Wiggins.
\newblock {\em The mathematical foundations of mixing}, volume~22 of {\em
  Cambridge Monographs on Applied and Computational Mathematics}.
\newblock Cambridge University Press, Cambridge, 2006.
\newblock The linked twist map as a paradigm in applications: micro to macro,
  fluids to solids.

\bibitem{TC03}
J.-L. Thiffeault and S.~Childress.
\newblock Chaotic mixing in a torus map.
\newblock {\em Chaos}, 13(2):502--507, 2003.

\bibitem{WV02}
A.~Wonhas and J.~C. Vassilicos.
\newblock Mixing in fully chaotic flows.
\newblock {\em Phys. Rev. E}, 66:051205, Nov 2002.

\end{thebibliography}

\bigskip
\noindent
 Charles Bordenave \\
 Institut de Math\'ematiques de Marseille. CNRS and Aix-Marseille University. \\
39 Rue Fr\'ed\'eric Joliot Curie, 13013 Marseille,  France. \\
\noindent
{E-mail:}\href{mailto:charles.bordenave@univ-amu.fr}{charles.bordenave@univ-amu.fr} \\

\noindent
Yanqi Qiu \\
Institute of Mathematics and Hua Loo-Keng Key Laboratory of Mathematics, AMSS, Chinese Academy of Sciences, Beijing 100190, China;
\\
CNRS, Institut de Math\'ematiques de Toulouse and University of Toulouse III. \\
\noindent
{E-mail:}\href{mailto:yanqi.qiu@amss.ac.cn}{yanqi.qiu@amss.ac.cn} \\

\noindent
Yiwei Zhang \\
School of Mathematics and Statistics, Center for Mathematical Sciences, Hubei Key Laboratory of Engineering Modeling and Scientific Computing, Huazhong University of Sciences and Technology, Wuhan 430074,  China.\\
\noindent
{E-mail:}\href{mailto:yiweizhang@hust.edu.cn}{yiweizhang@hust.edu.cn}

\end{document}